\def\N{\mathbb{N}}
\def\R{\mathbb{R}}
\def\Z{\mathbb{Z}}
\def\limn{\lim_{n\to\infty}}
\def\supp{\operatorname{supp}}
\title{On Sets of Periodic Orbit Lengths in Finitely Presented Dynamical Systems}
\author{Huub de Jong%
  \thanks{Electronic address: \texttt{huub.dejong@math.ubc.ca}}}
\affil{The University of British Columbia, Vancouver}
\date{\today}
\newtheorem{theorem}{Theorem}[section]
\newtheorem{corollary}[theorem]{Corollary}
\newtheorem{definition}[theorem]{Definition}
\newtheorem{lemma}[theorem]{Lemma}
\newtheorem{proposition}[theorem]{Proposition}
\newtheorem{remark}[theorem]{Remark}
\begin{document}

\maketitle

\begin{abstract}
    We classify the sets of natural numbers $n$ for which certain dynamical systems $(X,f)$ on a compact metric space $X$ have a periodic point of (least) period $n$. Interest in this question dates back to Sharkovskii's theorem for continuous maps on intervals of the real line, but it also ties to checkable conditions for Krieger's embedding theorem for symbolic dynamical systems. Given a system for which the logarithmic derivative of the Artin-Mazur zeta function is rational, we use the Skolem-Mahler-Lech theorem to classify for which $n$ the system has a periodic point of (not necessarily least) period $n$. Moreover, we build on work on finitely presented (FP) systems and their relationship to symbolic dynamics to classify the set of least periods, that is periodic orbit lengths, for arbitrary FP systems, extending a known classification for shifts of finite type. We also provide several constructions to realize any such least period sets.
\end{abstract}

\section*{Acknowledgements} 

This paper is motivated by earlier work of Madeline Doering and Ronnie Pavlov, and insightful discussions with the latter strengthened several of the results presented here. The author thanks Brian Marcus for his guidance and extensive feedback. The proofs of Theorems \ref{thm-realizeLPSsofic} and \ref{thm-realizeLPSgap} owe a great deal to Chengyu Wu, who also provided a many detailed comments on the work as a whole. Finally, the author expresses his gratitude to the anonymous referee, who provided several detailed comments that helped polish this paper into its final form. 

\section{Introduction}\label{intro}

Let $X$ be a nonempty compact metric space, and $f:X\to X$ a continuous surjective map. Moreover, denote $\N:=\{1,2,3,...\}$. Our primary interest in this paper is in the collection of \textit{periodic points} of period $n\in\N$, denoted
$$P_n(f):=\{x\in X:f^n(x)=x\},$$
and the periodic points of \textit{least period} $n$, which we denote
$$Q_n(f):=\{x\in X:f^n(x)=x,\text{ and }f^k(x)\neq x\text{ for }1\leq k<n\}.$$
For convenience, we will also write $p_n(f):=|P_n(f)|$, and $q_n(f):=|Q_n(f)|.$
It is clear that $$p_n(f)=\sum_{k|n} q_k(f). $$
In particular, we are interested in characterizing the \textit{period set} of $f$, $$\operatorname{PS}(f):=\supp(p_n(f))=\{n\in\N:p_n(f)>0\},$$
and the \textit{least period set}, that is the set of periodic orbit lengths, of $f$
$$\operatorname{LPS}(f):=\supp(q_n(f))=\{n\in\N:q_n(f)>0\}.$$ 
This question is of interest in various situations, two of which we will briefly explain.

First, in 1964, Sharkovskii completely classified $\operatorname{LPS}(f)$ when $f$ is a continuous selfmap of a (possibly infinite) interval $I\subset \R$. In his proof, he first constructs what is sometimes known as the \textit{Sharkovskii ordering} $\triangleleft$ on $\N$, given by \begin{align*}
\begin{matrix}
    &3 & \triangleleft &5& \triangleleft &7 &\triangleleft  & 9 &\triangleleft&\cdots \\
     \triangleleft&2\cdot 3&\triangleleft &2\cdot 5& \triangleleft &2\cdot 7 &\triangleleft & 2\cdot 9 &\triangleleft &\cdots \\
     \triangleleft& 2^2\cdot 3&\triangleleft &2^2 \cdot 5 &\triangleleft &2^2\cdot 7 &\triangleleft & 2^2\cdot 9 &\triangleleft &\cdots \\
      &\vdots & & \vdots & & \vdots & & \vdots & \\
     &\cdots &\triangleleft & 2^3&\triangleleft &2^2 &\triangleleft &2 &\triangleleft &1   
     \end{matrix}
\end{align*}
A \textit{tail} in a total ordering is a set $S$ such that whenever $n\in S$ and $n \triangleleft m$, then $m\in S$. The result known as
\textit{Sharkovskii's Theorem}  then states the following.
\begin{theorem}[Sharkovskii \cite{sharkovskyCoexistenceCyclesContinuous2024}]\label{thm-sharkovskii} A set $S\subset \N$ is the LPS of a continuous selfmap $f$ on an interval $I\subset \R$ if and only if $S$ is a tail in the Sharkovskii ordering.
\end{theorem}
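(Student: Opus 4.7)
The plan is to prove the two implications separately: the \emph{forcing direction}, that $\operatorname{LPS}(f)$ is necessarily a tail, and the \emph{realizability direction}, that every tail is realized by some $f$. The basic tool for forcing is a consequence of the intermediate value theorem: if $J, K$ are compact subintervals of $I$ with $f(J) \supseteq K$, say that $J$ \emph{$f$-covers} $K$. Then for any cyclic chain $J_0, J_1, \ldots, J_{k-1}$ of $f$-coverings (with $J_{k-1}$ covering $J_0$), there exists $x \in J_0$ with $f^i(x) \in J_i$ for each $i$ and $f^k(x) = x$. This reduces producing a period-$k$ point to finding a $k$-cycle in a combinatorial graph.

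For forcing, given an orbit $x_1 < x_2 < \cdots < x_n$ of least period $n$, I would form the associated \emph{Markov graph}: vertices are the intervals $[x_i, x_{i+1}]$, and directed edges record $f$-coverings. A case analysis on the arithmetic form of $n$ then exhibits, inside this graph, a \emph{Stefan cycle} --- an explicit loop pattern yielding, via the covering lemma, a periodic point of every period $m$ with $n \triangleleft m$. The representative cases are (i) $n$ odd and $> 1$, sitting in the top row of the ordering; (ii) $n = 2^k(2j+1)$ with $k \geq 1$ and $j \geq 1$, handled by applying case (i) to $f^{2^k}$ restricted to a suitable sub-orbit of odd period $2j+1$; and (iii) $n = 2^k$, handled by an inductive ``square-root'' argument descending a period-$2^k$ orbit of $f$ to a period-$2^{k-1}$ orbit of $f^2$.

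A subtle point throughout is that the covering lemma produces a point of period $k$ but not necessarily of \emph{least} period $k$. Resolving this requires either choosing the chain of covers to traverse pairwise distinct vertices (so that no shorter itinerary can fit) or a divisibility argument excluding every proper divisor of $k$ as the true least period. I expect this bookkeeping, together with explicitly producing the Stefan cycles in cases (ii) and (iii), to be the main technical obstacle.

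For realizability, the plan is to exhibit a family of piecewise linear maps whose LPSs sweep out every admissible tail. For the ``bottom'' tails $\{1, 2, 4, \ldots, 2^k\}$ and $\{2^j : j \geq 0\}$, iterated Feigenbaum-type doubling constructions --- gluing together rescaled copies of a simpler map --- realize these. For a tail whose $\triangleleft$-minimum is some $n > 1$ not a power of $2$, a suitably tuned tent map $T_s(x) = s \cdot \min(x, 1-x)$ has LPS exactly the tail generated by $n$: the forcing direction supplies every $m$ with $n \triangleleft m$, while direct inspection of $T_s$ rules out any earlier periods.
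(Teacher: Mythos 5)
The paper does not prove this theorem: it is quoted from Sharkovskii as background motivation, so there is no internal proof to compare yours against. Judged on its own terms, your outline is the classical proof strategy (covering lemma from the intermediate value theorem, the Markov graph of a cycle, Stefan cycles for odd periods, reduction of $n=2^k(2j+1)$ to the odd case via $f^{2^k}$, induction for $n=2^k$, and realization by a one-parameter piecewise linear family), and that skeleton is correct. But as written it is a plan rather than a proof: the two items you yourself flag as ``the main technical obstacle'' --- exhibiting the Stefan cycles in all cases and the least-period bookkeeping (a loop of length $m$ that is not a concatenation of shorter loops, plus the boundary cases where the itinerary point could be an endpoint of the orbit intervals) --- are precisely the substance of the forcing direction, and none of it is carried out.

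On the realizability side there are two concrete gaps. First, the statement allows non-compact intervals, and the empty set is a tail; it is realized by, e.g., $x\mapsto x+1$ on $\R$, while on a compact interval every continuous selfmap has a fixed point, so every tail realized there contains $1$. Your list of constructions omits this case. Second, for tails whose $\triangleleft$-minimum is $n=2^k(2j+1)$ with $k\geq 1$, $j\geq 1$, the pure slope family $T_s$ only attains these period sets at renormalization-boundary parameters, and ``direct inspection of $T_s$ rules out any earlier periods'' is not routine there: one must combine the odd-minimum case with the same doubling/renormalization argument you reserve for powers of $2$ (periods of $T_s$ for $s\leq\sqrt2$ being $\{1\}$ union twice the periods of $T_{s^2}$ on the core), or, more cleanly, use the truncated tent maps $\min(T_2,h)$, for which the exact period set can be read off from the truncation height. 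With those repairs your plan becomes the standard complete argument.
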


Our second motivating example comes from symbolic dynamics. This field is rich, but we will only introduce some basic tools. For an extensive introduction, we refer to \cite{Lind_Marcus_1995}. Let $\mathcal{A}$ be a finite set of symbols equipped with the discrete topology, called the \textit{alphabet}. We call $\mathcal{A}^\Z$ the \textit{full shift on $\mathcal{A}$}, and equip it with the \textit{shift map} $\sigma$ defined by
$$\sigma(x)_i=x_{i+1}.$$
A \textit{shift space} is then a subset of $\mathcal{A}^\Z$ that is $\sigma$-invariant and compact with respect to the product topology. We will denote shift spaces $X,Y$, etc. and write $p_n(X)$ as opposed to $p_n(\sigma)$, and use the same convention for other notation introduced throughout. 

A \textit{word} $w$ on $\mathcal{A}$ is a finite sequence of symbols in $\mathcal{A}$ whose length will be denoted $|w|$. We write $w^\infty$ for the point $x=\cdots w.ww\cdots$, where the full stop indicates that $x_0=w_0,$ the first symbol in $w$. A word $w$ is \textit{purely periodic} if there exists a word $v$ and integer $k>1$ such that $w=v^k$.

More generally, for $a,b\in\Z,a\leq b$ we write $x_{[a,b]}:=x_ax_{a+1}\cdots x_{b-1}x_b$. The \textit{cylinder set} of a word $w$ will be $$C_w=\{x\in X: x_{[0,|w|-1]}=w \}.$$
We say that a shift space $X$ is a \textit{shift of finite type} (SFT) if there exists a \textit{finite} collection of words $\{w_1,...,w_n\}$ on $\mathcal{A}$ such that 
$$\mathcal{A}^\Z\setminus X = \bigcup_{j=1}^n \bigcup_{i\in\Z} \sigma^i(C_{w_j}).$$
Stated otherwise, an SFT is a shift space that can be described by ``forbidding'' finitely many words from occurring in any point in the shift space. 

By recoding to what is known as a \textit{higher block representation} of $X$ we may always assume that the forbidden words $w_j$ are of length 2, in which case we will say that $X$ is a \textit{one-step} SFT. This will be a standing assumption on SFTs in what follows. 

One of the main benefits of this assumption is that, as per Theorem 2.3.2 in \cite{Lind_Marcus_1995}, the SFT can be presented by the collection of bi-infinite walks, encoded as sequences of edges, on some finite graph $G$. If $G$ is such a graph with adjacency matrix $A$, we will write $X_G$ or $X_A$ to mean the associated SFT and $\sigma_G$ or $\sigma_A$ for the corresponding shift map. The \textit{global period} of an SFT $X$ is defined to be $\gcd(\operatorname{PS}(X))$, which is equal to the period of any graph presentation of $X$.

A general dynamical system $(Y,f)$ is a \textit{factor} of an SFT $X$ if there exists a continuous surjection, called a \textit{factor map}, $\phi:X\to Y$ such that the diagram

$$\begin{tikzcd}
X \arrow[r, "\sigma"] \arrow[d, "\phi"] & X \arrow[d, "\phi"] \\
Y \arrow[r, "f"]                        & Y                  
\end{tikzcd}$$
commutes.\footnote{The commutative diagrams used throughout were created with the help of https://github.com/yishn/tikzcd-editor, the creator of which we thank.} If there exists an $M\in \N$ such that $|\phi^{-1}(y)|\leq M$ for all $y\in Y$, we say $\phi$ is \textit{bounded-to-one}. When there is no confusion, we will use ``$(Y,f)$ is a factor of $X$'' and ``$Y$ is a factor of $X$'' interchangeably. If a factor map $\phi$ is a homeomorphism, we call $\phi$ a \textit{conjugacy}. 

If $Y$ as above is both a factor of an SFT and a shift space itself, we say that $Y$ is \textit{sofic} \cite{weissSubshiftsFiniteType1973}. In parallel with the graph presentation for SFTs, sofic shifts can be presented by bi-infinite walks on a graph $G$ modulo a labeling $\Phi$ of the edges in $G$. We will call $(G,\Phi)$ a \textit{presentation} of a sofic shift.

Suppose $X$ is a compact metric space. We say a homeomorphism $f:X\to X $ is \textit{expansive} if there exists a constant $\varepsilon>0$ such that for any $x\neq y\in \Omega$, there exists an $n\in\Z$ such that $$d(f^nx,f^ny)>\varepsilon.$$ We call $\varepsilon$ an \textit{expansive constant} for $f$.

If a dynamical system $(Y,f)$ is an expansive factor of an SFT, then we say that it is \textit{finitely presented} (FP). Standard examples of FP systems are sofic shifts and \textit{hyperbolic toral automorphisms}, i.e. diffeomorphisms on $\R^d/\Z^d$ induced by a matrix $A\in \operatorname{GL}_d(\Z)$ with no eigenvalues of modulus 1 \cite{AdlerWeissMarkovPartitions,SinaiMarkovPartitions}. We will discuss the definition further in Section \ref{sec-fp}.

A general dynamical system $(X,f)$ is \textit{topologically transitive} if for every pair of nonempty open sets $U,V\subset X$ there exists an $n>0$ such that 
$$ U\cap f^{-n}(V) \neq \emptyset.$$
In particular, if for all nonempty open $U,V\subset X$, the above is true for all $n$ larger than some $N=N(U,V)>0$, we say that $(X,f)$ is \textit{topologically mixing.}

Let $X$ be an SFT represented by a graph $G$ with adjacency matrix $A_G$. We say that $X$ is \textit{irreducible}, respectively \textit{mixing}, if $A_G$ is irreducible, respectively \textit{primitive}. Note that irreducibility of a shift space is thus equivalent to being topologically transitive, and mixing  means topologically mixing.

The interest in the LPS of shift spaces comes from the famed \textit{Krieger embedding theorem}. Here, $h(X)$ denotes the \textit{entropy} of $X$, which will be defined in Section 4.
\begin{theorem}[\cite{Krieger_1982}]\label{thm-krieger}
    Let $Y$ be a mixing SFT and $X$ any shift space such that $h(X)<h(Y)$. Then $X$ embeds into $Y$ if and only if $q_n(X)\leq q_n(Y)$ for all $n\in\N$. 
\end{theorem}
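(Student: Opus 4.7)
The necessity direction is straightforward. If $\phi: X \to Y$ is an embedding, then $\phi$ is a conjugacy onto its image, hence preserves least periods: for $x \in Q_n(X)$ the identity $\sigma^n \phi(x) = \phi(\sigma^n x) = \phi(x)$ shows that the least period of $\phi(x)$ divides $n$, and if it were some proper divisor $k$, then $\phi(\sigma^k x) = \phi(x)$ together with injectivity would force $\sigma^k x = x$, contradicting $x \in Q_n(X)$. So $\phi$ restricts to an injection $Q_n(X) \hookrightarrow Q_n(Y)$ for every $n$, giving the inequality.

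The sufficiency is Krieger's theorem proper, and I would proceed in two stages: first match periodic points across $X$ and $Y$, then extend to the rest of $X$ using marker methods and the entropy gap. In the first stage, for each $n$ the inequality $q_n(X) \leq q_n(Y)$ lets us fix an injection of the $\sigma$-orbits of least period $n$ in $X$ into those of $Y$, and together these assemble into a shift-commuting injection $\psi_0: \bigcup_n Q_n(X) \to \bigcup_n Q_n(Y)$ defined on the countable set of periodic points of $X$. The eventual embedding must restrict to such a $\psi_0$, so fixing one now is harmless.

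In the second stage, the plan is to extend $\psi_0$ to all of $X$ via a Krieger-style marker construction. I would fix a large $N$ and produce a clopen marker set $F \subset X$ with $\sigma^{-i}F \cap \sigma^{-j}F = \emptyset$ for $0 < |i-j| < N$ and such that every aperiodic orbit visits $F$ infinitely often, so that each aperiodic point is cut into finite blocks of length at least $N$. The entropy gap $h(X) < h(Y)$ then guarantees that for all sufficiently large $\ell$ the number of admissible $X$-blocks of length $\ell$ is dominated by the number of $Y$-blocks of length $\ell$, leaving room for a block-by-block injective encoding into words of $Y$. The mixing hypothesis on $Y$ supplies bridge words of uniformly bounded length joining successive code blocks into a legal bi-infinite point, and reading markers from a bounded window around each coordinate makes the resulting map continuous and shift-commuting.

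The main obstacle will be stitching the two stages together: the global embedding must agree with $\psi_0$ on periodic points, remain injective on points that spend long stretches shadowing a periodic orbit before drifting away, and depend continuously on $x$ as it crosses from the almost-periodic part of $X$ into the aperiodic part. I expect to handle this, following Krieger, by a layered marker construction that separates the almost-periodic and aperiodic strata of $X$ and encodes each with its own convention, choosing the bridge words near periodic orbits so that they interpolate predictably to the pre-assigned $\psi_0$-images. The delicate combinatorial bookkeeping of these compatibilities, rather than the entropy count itself, is the genuine heart of the proof.
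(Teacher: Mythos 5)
The paper does not prove this statement at all; it is quoted from \cite{Krieger_1982}, so the only comparison available is with the standard proof in the literature. Your necessity argument is complete and correct: an embedding of shift spaces is a conjugacy onto its image, hence carries points of least period $n$ to points of least period $n$, giving the injection $Q_n(X)\hookrightarrow Q_n(Y)$.

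The sufficiency half, however, is a plan rather than a proof, and the part you defer is precisely the content of the theorem. Two concrete gaps. First, the entropy count you invoke does not by itself yield an injective coding: one must be able to recover the marker positions, and hence the original blocks, from the image point, so the code words in $Y$ must carry self-synchronizing information, and the bridge words supplied by mixing must be budgeted into the count; showing that the strict inequality $h(X)<h(Y)$ leaves room for all of this is an argument, not an observation. Second, and more seriously, the hypothesis $q_n(X)\le q_n(Y)$ may hold with equality for infinitely many $n$, so points of $X$ that shadow a low-period orbit for a very long stretch cannot be absorbed by the entropy surplus at all; they must be mapped along the pre-assigned image $\psi_0$ of that periodic orbit, and one has to verify that these forced assignments are mutually consistent on overlapping nearly-periodic windows, remain injective when distinct orbits of $X$ have images with coinciding period data, and mesh continuously with the aperiodic coding regime. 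You correctly identify this stitching as the heart of the proof, but you do not carry it out, and no off-the-shelf lemma does it: Krieger's layered marker construction exists exactly to resolve it. As written, the proposal establishes only the easy direction.
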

A similar statement is true when $X$ is irreducible, see work by Marcus, Meyerovitch, Thomsen, and Wu \cite{MarcusEtAlEmbeddingTheorems}. We point out the irreducible case because here the difference in LPS between SFTs and sofic shifts is more pronounced, see Table \ref{tab:results}. We will touch on this again in Section \ref{realizations}. 

The \textit{zeta function} of a dynamical system $(X,f)$ is $$\zeta_f(t):=\exp\left(\sum_{n\in\N} \frac{p_n(f)}{n}t^n\right),$$
wherever it is defined. In this definition we assume that $p_n(f)<\infty$ for all $n$, which will be the case for all systems under consideration. 

For two integers $d,k\in \N$ we will denote $$d(\N+k):=\{d(k+1),d(k+2),d(k+3),...\}.$$
More generally, for any set $S=\{s_1,s_2,..\}\subset \N$ and integer $d\in\N$ we will write $$dS=\{ds_1,ds_2,...\}.$$

We are now ready to state our main results. 

\begin{theorem}\label{thm-PSofrationalzeta}
    Let $(X,f)$ be a dynamical system with zeta function $\zeta_f(t)$ with positive radius of convergence, such that $\frac{d}{dt}\ln\zeta_f(t)$ is rational. Then the period set of $X$ is of the form
$$
\operatorname{PS}(X)=\bigcup_{i=0}^nd_{i}(\mathbb{N}+k_{i}),
$$
where the union is possibly empty. In particular, if $\zeta_f(t)$ is rational, the conclusion holds.
\end{theorem}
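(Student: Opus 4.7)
The plan is to translate the rationality hypothesis into a linear recurrence for the sequence $p_n(f)$, apply Skolem-Mahler-Lech to its support, and then exploit closure of $\operatorname{PS}(X)$ under multiplication to massage the output into the required form. First I compute
$$\frac{d}{dt}\ln\zeta_f(t) \;=\; \sum_{n\geq 1} p_n(f)\,t^{n-1},$$
so the hypothesis is equivalent to $\sum_{n\geq 1} p_n(f)\,t^n$ being a rational power series. A standard fact about rational power series then gives that $(p_n(f))_{n\geq 1}$ satisfies a linear recurrence with rational coefficients. The ``in particular'' clause is immediate: if $\zeta_f$ itself is rational, so is its logarithmic derivative $\zeta_f'/\zeta_f$, and we are back in the general case.

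Next I apply Skolem-Mahler-Lech to $(p_n(f))$: the zero set $\N\setminus\operatorname{PS}(X)$ is a union of a finite set and finitely many infinite arithmetic progressions. Equivalently, there exist $N,L\in\N$ and a residue set $R\subseteq\Z/L\Z$ such that for every $n\geq N$, we have $n\in\operatorname{PS}(X)$ if and only if $n\bmod L \in R$.

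The key additional ingredient is that $\operatorname{PS}(X)$ is closed under multiplication by $\N$: from $f^n(x)=x$ it follows that $f^{mn}(x)=x$ for every $m\geq 1$. Using this I would show $R$ is in fact a union of subgroups of $\Z/L\Z$. Concretely, for $r\in R$ pick $a\in\operatorname{PS}(X)$ with $a\geq N$ and $a\equiv r\pmod L$; then $a\N\subseteq\operatorname{PS}(X)$, and the residues of $a\N$ modulo $L$ exhaust the cyclic subgroup $\gcd(a,L)\cdot\Z/L\Z = \gcd(r,L)\cdot\Z/L\Z$. Since $a\N$ meets each such residue class infinitely often, eventual periodicity forces the entire subgroup into $R$. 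As $\Z/L\Z$ has only finitely many subgroups, this gives $R = \bigcup_{d\in D} d\,\Z/L\Z$ for a finite set $D$ of divisors of $L$.

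Finally I assemble the decomposition. For each $d\in D$, choose $k_d$ with $d(k_d+1)\geq N$; then every element of $d(\N+k_d)$ is a multiple of $d$ of size at least $N$, whose residue modulo $L$ lies in $d\,\Z/L\Z\subseteq R$, so $d(\N+k_d)\subseteq\operatorname{PS}(X)$. Together these sets cover $\operatorname{PS}(X)\cap[N,\infty)$, since any such $n$ has $n\bmod L\in R$, hence is a multiple of some $d\in D$. For each of the finitely many $s\in\operatorname{PS}(X)\cap[1,N-1]$, multiplicative closure gives $s\N = s(\N+0)\subseteq \operatorname{PS}(X)$, which absorbs $s$ itself. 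Reindexing the resulting finitely many generators as $\{(d_i,k_i)\}_{i=0}^n$ yields the stated form (with the empty union accounting for the case $\operatorname{PS}(X)=\emptyset$). I expect the most delicate step to be the reduction to subgroups of $\Z/L\Z$: Skolem-Mahler-Lech a priori yields arithmetic progressions with arbitrary starting residues, whereas the target form $d(\N+k)$ insists the AP consist of \emph{multiples} of $d$, and it is precisely multiplicative closure that bridges these two descriptions.
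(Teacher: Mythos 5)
Your proposal is correct and follows essentially the same route as the paper: logarithmic derivative $\Rightarrow$ linear recurrence for $p_n(f)$ $\Rightarrow$ Skolem--Mahler--Lech $\Rightarrow$ use of the closure $n\in\operatorname{PS}(X)\Rightarrow kn\in\operatorname{PS}(X)$ together with a gcd/B\'ezout argument to turn eventual residue classes into sets of multiples $d(\N+k)$; your subgroup-of-$\Z/L\Z$ phrasing is just a repackaging of the paper's step showing the surviving residues are closed under the relation $i\mid j \bmod r$. The only nitpick is that for the covering direction you should take $k_d$ minimal with $d(k_d+1)\geq N$, which is a trivial adjustment.
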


A set $S\subset\N$ is \textit{cofinite} if its complement in $\N$ is finite. The following result classifies the LPS for FP systems. The converse is also true: see Theorem \ref{thm-realizeLPSsofic}, where we construct a sofic shift (i.e. an FP system) that realizes an LPS of the form below. 

\begin{theorem}\label{thm-LPSofFP-intro}
    Let $(\Omega,f)$ be an FP system. Then $\operatorname{LPS}(f)$ is of the form $$F\cup \bigcup_{i=1}^n d_iS_i,$$
    where $F$ is finite, each $S_i$ is cofinite, and either $F$ or the big union may be empty, but not both.

    Moreover, if $(\Omega,f)$ is a topologically transitive FP system, then $\operatorname{LPS}(f)$ is either a singleton or of the form above with $n\geq 1$, i.e. at least one cofinite set $S_i$.
    
    Finally, if $(\Omega,f)$ is mixing, then $\operatorname{LPS}(f)$ is either $\{1\}$ or a cofinite set.
\end{theorem}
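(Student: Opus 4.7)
The plan is to pull back through an SFT cover and reduce the problem to an exponential-growth analysis on irreducible components. By the definition of FP, we have an expansive factor map $\phi:X\to\Omega$ with $X$ an SFT, which, by a standard refinement, can be chosen bounded-to-one with a uniform bound $M$ on fiber sizes. Decompose the non-wandering set of $X$ into finitely many irreducible SFT components $X_1,\dots,X_r$, each of period $d_j$ and Perron eigenvalue $\lambda_j$.

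The first step is to show $\operatorname{LPS}(\Omega)=\bigcup_{j=1}^r\operatorname{LPS}(\phi(X_j))$. The inclusion $\supseteq$ is immediate. For $\subseteq$, given $y\in\Omega$ of least period $m$, the fiber $\phi^{-1}(y)$ is finite and permuted by $\sigma^m$, so any $x\in\phi^{-1}(y)$ satisfies $\sigma^{mM!}x=x$ and hence is $\sigma$-periodic. Such $x$ lies in the non-wandering set of $X$, hence in some $X_j$, and $y=\phi(x)$ witnesses $m\in\operatorname{LPS}(\phi(X_j))$.

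Next, for each $j$ set $\Omega_j:=\phi(X_j)$, a topologically transitive FP system. If $h(\Omega_j)=0$, then bounded-to-oneness preserves entropy so $h(X_j)=0$, and the irreducible SFT $X_j$ is a single periodic orbit, making $\operatorname{LPS}(\Omega_j)$ a singleton. Otherwise $h(\Omega_j)>0$; Perron--Frobenius gives $p_n(X_j)\asymp\lambda_j^n$ along $d_j\N$, and bounded-to-oneness gives $p_n(\Omega_j)\geq p_n(X_j)/M$, while an $(n,\varepsilon)$-separated-set argument using the expansive constant of $f$ yields the upper bound $p_n(\Omega_j)\leq C_\delta e^{n(h(\Omega_j)+\delta)}$ for any $\delta>0$. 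Plugging into $q_n=p_n-\sum_{k\mid n,\,k<n}q_k\geq p_n-\sum_{k\leq n/2}p_k$ and choosing $\delta<h(\Omega_j)$, the sum on the right is dominated by $p_n$, yielding $q_n(\Omega_j)>0$ for all sufficiently large $n\in d_j\N$. Hence $\operatorname{LPS}(\Omega_j)$ is cofinite in $d_j\N$ up to a finite correction.

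Combining across $j$ produces the general form $F\cup\bigcup_i d_i S_i$. For transitive $\Omega$, a transitive point's orbit is contained in some $\phi(X_{j_0})$, which is then all of $\Omega$, and one may take $r=1$, so $\operatorname{LPS}(\Omega)$ is either a singleton or has the form with at least one cofinite $S_1$. For mixing $\Omega$, one further arranges the cover to be mixing (so $d_1=1$), giving $\operatorname{LPS}(\Omega)=\{1\}$ or cofinite in $\N$. The hardest part is the two-sided exponential bound $p_n(\Omega_j)=\Theta(\lambda_j^n)$: the lower bound requires bounded-to-oneness on periodic fibers, and the upper bound requires a careful use of expansiveness for general FP systems beyond the sofic case.
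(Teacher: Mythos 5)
Your counting mechanism (the lower bound $p_n(\phi(X_j))\geq p_n(X_j)/M$ from bounded-to-oneness, the upper bound on $p_k$ from expansiveness, and the subtraction $q_n\geq p_n-\sum_{k\leq n/2}p_k$) is the same engine the paper uses, and it does show that each positive-entropy image $\phi(X_j)$ realizes all sufficiently large multiples of $d_j$ as least periods; this essentially settles the mixing case. The genuine gap is the step ``combining across $j$ produces the general form'': you have only proved a containment from below. To get the stated form you would also need that $\operatorname{LPS}(\phi(X_j))$ is contained in $d_j\N$ up to a finite set (or at least organizes into finitely many sets $d_iS_i$), and this is false for an arbitrary bounded-to-one irreducible cover, because the factor map can collapse least periods: a point $y$ of least period $m$ may have all of its preimages of least period a proper multiple of $m$, so $m$ need not be a multiple of $d_j$. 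Concretely, if $X\to Y$ is a bounded-to-one mixing SFT cover of a mixing sofic shift $Y$, then composing with the projection $X\times C\to X$ (where $C$ is a single period-two orbit) gives a bounded-to-one \emph{irreducible} cover of $Y$ of period $2$, yet $\operatorname{LPS}(Y)$ contains all large odd numbers; so ``cofinite in $d_j\N$ up to a finite correction'' fails, and your argument gives no control over these collapsed periods, which in general can form several distinct arithmetic families (this is exactly why transitive sofic and FP systems can have several different $d_i$ in their LPS, unlike irreducible SFTs).

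This is precisely the difficulty the paper's proof is organized around. It takes the cover arising from a Markov partition, so that preimages of periodic points are mutually separated and some point has a unique preimage; it then forms the layers $\Omega_i$ of points with at least $i$ mutually separated preimages, covers each layer by the Manning-style SFT $X_i$ built on $i$-element sets of partition members, and, in each irreducible component $X_i^j$, counts periodic points of $\Omega$ having a \emph{unique} preimage under $\pi_i^j$. Every least period of $\Omega$ is captured at some layer, and for each component the unique-preimage least periods form exactly a singleton or $dS$ with $S$ cofinite; the lower bound there requires subtracting the periodic points of the deeper layer $\Omega_{i+1}$, with entropy-minimality of irreducible SFTs supplying the strict entropy drop $\mu<\lambda$. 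Without this layering (or some substitute that controls points with several preimages), decomposing a single cover into irreducible components cannot yield the claimed form. Two further soft spots: your transitive-case reduction (``a transitive point's orbit is contained in some $\phi(X_{j_0})$'') is unjustified, since the lift of a transitive point may be transient and pass between components --- the paper instead uses that a Markov-partition cover of a transitive FP system is irreducible; and the existence of a bounded-to-one SFT cover is not a routine refinement of an arbitrary factor map but comes from the Markov partition (Fried), which is also what provides the mutual-separation and unique-preimage properties your approach would need in any case.
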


If the FP system in theorem above is a mixing sofic shift, the result was shown by Doering in \cite{DoeringMasters}. The classification for SFTs can be made more specific. The following theorem was proved directly by Doering and Pavlov; we provide an alternative proof as a special case of the result above.

\begin{theorem}[\cite{DoeringPavlov}]\label{thm-LPSofirredSFT-intro}
    Let $X$ be an SFT. Then $\operatorname{LPS}(X)$ is of the form $$F\cup \bigcup_{i=1}^n d_iS_i,$$
    where $F$ is finite, each $S_i$ is cofinite, and either $F$ or the big union may be empty, but not both.
    
    If $X$ is irreducible, then $\operatorname{LPS}(X)$ is either a singleton $\{d\}$ or of the form $dS$ for $S$ cofinite, and if $X$ is mixing, then $\operatorname{LPS}(X)$ is either $\{1\}$ or cofinite.
\end{theorem}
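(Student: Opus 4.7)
The plan is to derive the general and mixing cases directly from Theorem \ref{thm-LPSofFP-intro} and to handle the irreducible case separately via Perron--Frobenius. Every SFT is expansive and is trivially a factor of itself via the identity conjugacy, so every SFT is an FP system, and both the general form $F \cup \bigcup_{i=1}^n d_i S_i$ and the mixing conclusion transfer immediately from Theorem \ref{thm-LPSofFP-intro}. The substantive extra content to verify is the irreducible case, where ``at least one cofinite $S_i$ is present'' must be sharpened to ``$\operatorname{LPS}(X) = \{d\}$ or $\operatorname{LPS}(X) = dS$ for a single cofinite $S$.''

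For this, I would work with a graph presentation $G$ of $X$ having irreducible adjacency matrix $A$ of period $d$. By Perron--Frobenius, the eigenvalues of $A$ of maximum modulus are exactly $\lambda \omega^j$ for $\omega = e^{2\pi i / d}$ and $0 \leq j < d$, so $p_n(X) = \operatorname{tr}(A^n) = 0$ whenever $d \nmid n$; in particular $\operatorname{LPS}(X) \subseteq d\N$. I would then split on the Perron eigenvalue $\lambda$. If $\lambda = 1$, then since $A$ has nonnegative integer entries and is irreducible with spectral radius $1$, $A$ must be a permutation matrix realising a single directed $d$-cycle, so $X$ is one periodic orbit of length $d$ and $\operatorname{LPS}(X) = \{d\}$. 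If instead $\lambda > 1$, I would combine the Perron--Frobenius asymptotic $p_{dk}(X) \sim d\lambda^{dk}$ with the restricted M\"obius inversion
$$q_{dk}(X) = \sum_{j \mid k} \mu(k/j)\, p_{dj}(X),$$
to conclude that $q_{dk}(X) > 0$ for all sufficiently large $k$; together with $\operatorname{LPS}(X) \subseteq d\N$ this gives $\operatorname{LPS}(X) = dS$ for a cofinite $S \subseteq \N$. The mixing assertion is then corroborated as the special case $d = 1$.

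The main technical step is the positivity of $q_{dk}(X)$ when $\lambda > 1$: the contributions from proper divisors $j < k$ must not cancel the main term. Since every proper divisor of $k$ is at most $k/2$, the subtracted portion is bounded by $\tau(k) \cdot d\lambda^{dk/2}$, where the divisor function $\tau(k)$ grows subpolynomially, which is negligible compared to $p_{dk}(X) \sim d\lambda^{dk}$. This comparison is routine, but it is the one place where genuine exponential growth is used, and thus the point most in need of care.
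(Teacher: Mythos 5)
Your proposal is correct, and for the general and mixing statements it coincides with the paper's route: the paper also obtains Theorem \ref{thm-LPSofirredSFT-intro} by specializing the proof of Theorem \ref{thm-LPSofFP-intro} to the case where the SFT is its own cover ($\pi = \mathrm{id}$, the alphabet serving as the Markov partition). Where you genuinely diverge is the irreducible case. The paper re-enters its FP machinery: with the identity cover there is a single irreducible component $X_1^1$, and the per-component analysis (finite orbit versus positive entropy, growth rate $p_{dn}\sim\lambda^{dn}$ from Perron--Frobenius via Lind--Marcus, then the divisor-sum estimate of Corollary 4.3.8 there) yields $\{d\}$ or $dS$. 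You instead give a self-contained linear-algebra argument: $\mathrm{LPS}(X)\subseteq d\N$ from the rotational symmetry of the spectrum (equivalently, every cycle length is divisible by $d$), the dichotomy $\lambda=1$ (permutation matrix, single $d$-cycle) versus $\lambda>1$, and in the latter case restricted M\"obius inversion $q_{dk}=\sum_{j\mid k}\mu(k/j)p_{dj}$ with the proper-divisor bound $\tau(k)\lambda^{dk/2}$ against the main term. The underlying counting idea is the same (exponential growth of $p_{dn}$ swamps contributions from proper divisors), but your version avoids the entropy-minimality and Markov-partition apparatus entirely for the sharpened irreducible conclusion, at the cost of re-deriving the period-divisibility and growth facts that the paper inherits from its general framework; this also makes your irreducible case closer in flavor to Doering and Pavlov's original direct proof. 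Two small points of care, neither fatal: the vanishing of $\operatorname{tr}(A^n)$ for $d\nmid n$ follows from invariance of the full spectrum under multiplication by $\omega$ (or from cycle lengths), not merely from the location of the maximum-modulus eigenvalues; and the uniform upper bound on $p_{dj}$ should be $N\lambda^{dj}$ (with $N$ the matrix size) rather than $d\lambda^{dj}$, which changes nothing in the comparison since $\lambda>1$.
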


The results on LPSs are summarized handily in Table \ref{tab:results}, where any set denoted $S$ or $S_i$ is understood to be cofinite and $F$ denotes a finite set. The condition $(*)$ indicates that if the LPS of a toplogically transitive FP system or of an irreducible sofic shift is finite, it must be a singleton. This condition does not hold in the general case.

\begin{table}
\begin{center}
\begin{tabular}{|c||c|c|c|}
    \hline
     & SFT & Sofic & FP \\ 
 \hline\hline
    Mixing & $\{1\}$ or $S$ & $\{1\}$ or $S$ & $\{1\}$ or $S$  \\ 
    \hline
    Transitive & $\{d\}$ or $dS$ & $F\cup \bigcup_{i=1}^n d_iS_i \;\;{(*)}$ & $F\cup \bigcup_{i=1}^n d_iS_i \;\;{(*)}$\\
    \hline General & $F\cup \bigcup_{i=1}^n d_iS_i$ & $F\cup \bigcup_{i=1}^n d_iS_i$ & $F\cup \bigcup_{i=1}^n d_iS_i$  \\ \hline
\end{tabular}
\caption{
      \label{tab:results}
      Summary of classification of least period sets.}
\end{center}
\end{table}

The structure of this paper is as follows. In Section \ref{PS} we prove Theorem \ref{thm-PSofrationalzeta}, and in Section \ref{LPS} we prove Theorems \ref{thm-LPSofFP-intro} and \ref{thm-LPSofirredSFT-intro}. The latter section also contains some intermediate results that may be of independent interest, and we further provide a classification for the LPS of shifts known as \textit{gap shifts} by elementary tools.

Section \ref{realizations} is dedicated to constructions that show that any LPS identified in Section \ref{LPS} can indeed be realized by some dynamical system, in fact a shift space.

\section{Period Sets}\label{PS}

In this section, we will prove Theorem \ref{thm-PSofrationalzeta}, which in particular classifies the period set of dynamical systems with a rational zeta function. Systems with this property include rational maps of the complex plane \cite{Hinkkanen_1994} and FP Systems \cite{FriedFPSystems}, which include Axiom A diffeomorphisms \cite{ManningRationalZeta} and SFTs \cite{Lind_Marcus_1995}, fall in this category.

Recall that we are trying to show that if $(X,f)$ is a dynamical system whose zeta function with positive radius of convergence such that $\frac{d}{dt}\ln\zeta_f(t)$ is rational, then the period set of $X$ is of the form
$$
\operatorname{PS}(X)=\bigcup_{i=0}^nd_{i}(\mathbb{N}+k_{i}),
$$
where the union is possibly empty.

We will require a lemma due to Schützenberger, which we modify slightly to suit our purposes. A sequence of complex numbers $p_n$ satisfies a \textit{linear recurrence (of order $k$)} if there exist a finite collection of complex numbers $b_i$ for $0\leq i \leq k$, not all 0, such that for all $n$ larger than some $N\in \N$, $$\sum_{i=0}^k b_ip_{n-i}=0.$$ The proof of the lemma is fairly elementary and we include it here for completeness. 
\begin{lemma}[Schützenberger]\label{lem-schutzenberger}
    Consider a sequence $(p_n)_{n=0}^\infty$ of complex numbers such that on some open disk in $\mathbb{C}$, centered at 0 we have
$$
\sum_{n=0}^\infty p_{n}t^n = \frac{A(t)}{B(t)},
$$
with $A(t),B(t)\in\mathbb{C}[t]$. Then for large enough $n$, $(p_n)$ satisfies a linear recurrence. Moreover, if $p_n\in \mathbb{Q}$ for all $n$, then the coefficients of $A(t),B(t)$ are rational.
\end{lemma}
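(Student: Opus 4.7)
My plan is to handle the two assertions in sequence. The recurrence claim falls out of a direct coefficient comparison, while the rationality claim requires a short linear-algebra argument leveraging the fact that a matrix with rational entries has the same rank over $\mathbb{Q}$ and over $\mathbb{C}$.

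First, I would clear denominators in $\sum_n p_n t^n = A(t)/B(t)$ to obtain the identity $A(t) = B(t)\sum_n p_n t^n$, and then extract the coefficient of $t^m$ on both sides. Writing $B(t) = \sum_{i=0}^k b_i t^i$, for $m \ge k$ the right-hand coefficient is $\sum_{i=0}^k b_i p_{m-i}$, and for $m > \deg A$ the left-hand coefficient is $0$. Setting $N := \max(k, \deg A)$, this gives $\sum_{i=0}^k b_i p_{m-i} = 0$ for all $m > N$, which is precisely the claimed linear recurrence; the $b_i$ are not all zero since $B \not\equiv 0$.

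For the second part, suppose $p_n \in \mathbb{Q}$ for all $n$. My plan is to view the recurrence $\sum b_i p_{m-i} = 0$ as an infinite homogeneous linear system in the unknowns $b_i$ with coefficient matrix $M$ of entries $p_{m-i} \in \mathbb{Q}$, and from the existence of a complex solution deduce the existence of a suitable rational solution from which to build rational $A', B'$. After canceling common powers of $t$ I may assume $B(0) = b_0 \neq 0$, so our existing $\mathbb{C}$-solution has nonzero first coordinate. The key linear-algebra fact I would invoke is that for a matrix with rational entries, $\operatorname{rank}_{\mathbb{Q}}(M) = \operatorname{rank}_{\mathbb{C}}(M)$, so the $\mathbb{Q}$-null space of $M$ spans its $\mathbb{C}$-null space over $\mathbb{C}$. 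If every rational null vector of $M$ had first coordinate zero, then so would every complex linear combination of such vectors, contradicting the existence of our vector with $b_0 \neq 0$. Hence there is a rational null vector $(b_0', \ldots, b_k')$ with $b_0' \neq 0$. Setting $B'(t) := \sum b_i' t^i \in \mathbb{Q}[t]$ and $A'(t) := B'(t)\sum_n p_n t^n$, the recurrence forces the coefficients of $A'$ to vanish in all sufficiently high degrees, so $A' \in \mathbb{Q}[t]$, giving the desired rational factorization.

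The main obstacle is genuinely that final existence step — arranging the rational null vector to have \emph{nonzero} leading coordinate, so that $B'$ has nonzero constant term and the quotient $A'/B'$ actually represents the power series. That reduces cleanly to the rank-equality statement between $\mathbb{Q}$ and $\mathbb{C}$, which is standard but indispensable; the rest is coefficient bookkeeping.
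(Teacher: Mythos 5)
Your proof is correct, and while the first half (clearing denominators and comparing coefficients to get the recurrence) is the same as the paper's, your argument for rationality takes a genuinely different route. The paper normalizes $B$ to have constant term $1$ with $\gcd(A,B)=1$, notes that the resulting recurrence is of minimal order, forms the Hankel matrices $P_n$, argues that minimality forces $P_n$ to be invertible, and reads off the companion matrix as $P_{n+1}P_n^{-1}$, whose entries are rational because the $p_n$ are; this pins down rationality of the canonical coprime denominator directly, but it leans on the nontrivial facts that minimal recurrence order corresponds to coprimality and that the associated Hankel matrix is invertible. You instead treat the recurrence as a homogeneous linear system over $\mathbb{Q}$ in the unknown coefficients $b_i$ and use that the $\mathbb{C}$-kernel of a rational matrix is spanned by its $\mathbb{Q}$-kernel (rank is insensitive to field extension) to extract a rational null vector with nonzero constant term, then rebuild $A'=B'\cdot\sum p_nt^n\in\mathbb{Q}[t]$; since the kernel lives in the finite-dimensional space $\mathbb{C}^{k+1}$, the infinitude of the row set is harmless. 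This avoids the Hankel-invertibility step entirely and is arguably more robust. Note that, as literally stated, "the coefficients of $A,B$ are rational" can only mean the suitably normalized representation (one can always scale by $i$), and your argument yields the existence of \emph{some} rational pair $A',B'$; to recover the paper's phrasing add one line: reduce $A'/B'$ to lowest terms over $\mathbb{Q}$, normalize the constant term to $1$, and use uniqueness of the coprime representation up to scalars. That suffices for everything the paper uses the lemma for (e.g.\ algebraicity of the poles and zeros).
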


\begin{proof}
    Since the series does not have a pole at 0, we may write 
$$\sum_{n=0}^\infty p_{n}t^n= \frac{a_{k}t^k+a_{k-1}t^{k-1}+ \dots +a_{0} }{b_{m}t^m+\dots + 1},$$
so that the numerator and denominator are relatively prime in $\mathbb{C}[t]$. Then we have 
\begin{align}\label{recurrence}
    ({b_{m}t^m+\dots +1})\left(\sum_{n=0}^\infty p_{n}t^n\right)={a_{k}t^k+a_{k-1}t^{k-1}+ \dots +a_{0} },
\end{align}
and so for $n> k,$ 
$$
p_{n}=\sum_{i=1}^m -p_{n-i}b_{i}.
$$
The linear recurrence above is in fact of minimal order, since otherwise $\operatorname{gcd}(A(t),B(t))\neq 1$. Let now
$$
P_{n}=\begin{pmatrix}
p_{n} & \dots & p_{n-m+1} \\
\vdots & \ddots & \vdots  \\
p_{n-m+1}& \dots & p_{n-2m+2}
\end{pmatrix}
$$
so that
$$
P_{n+1} =
\begin{pmatrix}
{-b_{1}} & {-b_{2}} & \dots & {-b_{m}} \\
1 & 0 & \dots & 0 \\
\vdots  & \ddots & & \vdots \\
0 & \dots & 1 & 0
\end{pmatrix} 
P_{n}.
$$
Since $(p_n)$ does not satisfy any shorter linear recurrence, $P_n$ is invertible. It follows that
$$
P_{n+1}P_{n}^{-1} =
\begin{pmatrix}
{-b_{1}} & {-b_{2}} & \dots & {-b_{m}} \\
1 & 0 & \dots & 0 \\
\vdots  & \ddots & & \vdots \\
0 & \dots & 1 & 0
\end{pmatrix},
$$
hence $B(t)\in \mathbb{Q}[t]$. By (\ref{recurrence}), $A(t)\in \mathbb{Q}[t]$, and the proof is complete.
\end{proof}

The second part of the lemma above implies that if a system has a rational zeta function, its poles and zeroes must be algebraic numbers.

We will also use the following powerful theorem, which is much broader than our use case. We refer the reader to \cite{HanselSMLTheorem} for a comprehensive proof. For a sequence $(a_n)$, we denote its \textit{zero set} by $$Z(a_n):=\{n\in\N:a_n=0\}.$$

\begin{theorem}[Skolem-Mahler-Lech]\label{thm-SML}
    Let $(a_n)$ be a sequence of complex numbers that satisfies a linear recurrence. Then there exist $r,b_1,\dots,b_k$ with $k$ possibly $0$ such that the zero set of $(a_{n})$ is
$$
Z(a_{n})=F \cup \bigcup_{i=1}^k (b_{i}+r\mathbb{N})
$$
where F is finite.
\end{theorem}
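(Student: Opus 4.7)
The plan is to establish the theorem by the classical $p$-adic approach pioneered by Skolem, extended by Mahler, and completed by Lech. First I would obtain a closed-form expression for $(a_n)$. Given a linear recurrence $\sum_{i=0}^k b_i a_{n-i} = 0$, the characteristic polynomial $\sum_i b_i t^{k-i}$ factors over $\overline{\mathbb{Q}}$ (since the $b_i$ may be taken algebraic by working with a minimal-order recurrence and noting the initial conditions and coefficients generate a finitely generated field extension), yielding
$$a_n = \sum_{j=1}^s P_j(n)\,\alpha_j^{\,n}$$
for distinct nonzero algebraic numbers $\alpha_j$ and polynomials $P_j$ with algebraic coefficients. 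So I may assume everything lives in some number field $K$.

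Next I would pass to a $p$-adic setting. Choosing a finite prime $\mathfrak{p}$ of $K$ of good reduction (avoiding denominators and ramification, as well as primes dividing any $\alpha_j$), I embed $K \hookrightarrow K_{\mathfrak{p}}$, a complete local field. Because the residue field is finite, there exists $N \in \mathbb{N}$ such that $\alpha_j^N$ lies in the principal units and in particular satisfies $|\alpha_j^N - 1|_{\mathfrak{p}} < p^{-1/(p-1)}$ for every $j$. Fix a residue class $r \in \{0, 1, \dots, N-1\}$ and consider the subsequence $n = mN + r$. The key point is that $\alpha_j^{mN+r} = \alpha_j^{r}\,(\alpha_j^N)^m$, and by the $p$-adic logarithm/exponential on the principal unit group, the map
$$m \longmapsto \alpha_j^{r}\,\exp_{\mathfrak{p}}\!\bigl(m \log_{\mathfrak{p}}(\alpha_j^N)\bigr)$$
is a $\mathfrak{p}$-adic analytic function of $m \in \mathbb{Z}_{\mathfrak{p}}$, which agrees with $\alpha_j^{mN+r}$ on $m \in \mathbb{Z}$. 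Summing over $j$, there is a power series $f_r(x) \in K_{\mathfrak{p}}[[x]]$, convergent on $\mathbb{Z}_{\mathfrak{p}}$, with $f_r(m) = a_{mN+r}$ for all $m \in \mathbb{N}$.

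The finishing step is Strassman's theorem: a nonzero power series convergent on $\mathbb{Z}_{\mathfrak{p}}$ has only finitely many zeros in $\mathbb{Z}_{\mathfrak{p}}$. Applied to each $f_r$, this produces a dichotomy. Either $f_r \equiv 0$, in which case the entire arithmetic progression $\{r, r+N, r+2N, \dots\}$ lies in $Z(a_n)$, or $f_r$ has finitely many zeros, contributing only finitely many elements to $Z(a_n) \cap (r + N\mathbb{Z})$. Taking the union over the $N$ residue classes and grouping the finitely many stray zeros into $F$ yields
$$Z(a_n) = F \cup \bigcup_{i=1}^k (b_i + N\mathbb{N}),$$
with $r = N$ and the $b_i$ being the finitely many residue classes for which $f_r$ vanishes identically.

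The main obstacle is the $\mathfrak{p}$-adic analytic machinery: one must justify convergence of $\log_{\mathfrak{p}}$ and $\exp_{\mathfrak{p}}$ on the appropriate disks, which forces the choice of $N$ to bring each $\alpha_j^N$ strictly inside the radius of convergence of $\log_{\mathfrak{p}}$, and then verify Strassman's bound. A secondary subtlety is the reduction to algebraic coefficients: the statement is for complex sequences, but a minimal linear recurrence forces the $b_i$ into the field generated by the initial values and original coefficients, so one must argue that WLOG everything lies in a number field (for instance, by specializing a generic transcendence basis). This specialization step, together with the uniform choice of $N$ taming all the $\alpha_j$ simultaneously, is the delicate heart of the proof.
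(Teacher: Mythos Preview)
The paper does not actually prove this theorem: it simply states the Skolem--Mahler--Lech theorem and refers the reader to \cite{HanselSMLTheorem} for a comprehensive proof. So there is no ``paper's own proof'' to compare against; the result is invoked as a black box in the proof of Theorem~\ref{thm-PSofrationalzeta}.

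Your sketch is the standard $p$-adic argument and is essentially correct in outline. The one point that deserves more care is exactly the one you flag at the end: the reduction from arbitrary complex coefficients to a number field. A minimal recurrence places the coefficients $b_i$ in the field generated by finitely many initial terms, but that field may still be transcendental over $\mathbb{Q}$. Lech's contribution was precisely to handle this case, and the cleanest route is to embed the finitely generated subfield of $\mathbb{C}$ into a suitable $p$-adic field (or, as you suggest, specialize a transcendence basis so as to preserve the distinctness of the $\alpha_j$ and the nonvanishing of the $P_j$). Either way this step is not automatic and would need to be spelled out in a full proof. Everything else---the exponential-polynomial form, the choice of $N$ to push the $\alpha_j^N$ into the domain of $\log_{\mathfrak p}$, the interpolation to a $p$-adic analytic function on each residue class, and the appeal to Strassman's theorem---is exactly the classical line and is sound.
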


We have now equipped ourselves with all the tools required to prove our main result of this section.

\begin{proof}[Proof of Theorem \ref{thm-PSofrationalzeta}.]
    Let $\zeta(t)$ denote the dynamical zeta function of $(X,f)$. By definition of the zeta function, we have 
\begin{align*}
   \frac{d}{dt}\ln(\zeta(t))=\frac{d}{dt}\sum_{n=1}^\infty \frac{p_{n}}{n}t^n=\sum_{n=1}^\infty p_{n} t^{n-1}.
\end{align*}
Since $\frac{d}{dt}\ln\zeta(t)$ is assumed to be rational, by Lemma \ref{lem-schutzenberger}, the sequence $(p_n)$ satisfies a linear recurrence. By Theorem \ref{thm-SML}, this means that for some $r,K\in \mathbb{N}$ and some $B\subset \{0,1,\dots,r-1\}$ and finite $F\subset\mathbb{N}$,
$$
Z(p_{n})=F \cup \bigcup_{i \in B}\{ m = i \mod r, m\geq K\},
$$
and thus 
\begin{equation}\label{PSisZcomplement}
\operatorname{PS}(X)=Z(p_{n})^c=F'\cup \bigcup_{i\in B^c}\{ m=i \mod r, m\geq K \},
\end{equation}
where $F'=\{ 1,\dots,K-1 \}\setminus F$.

We shall write $i|j \mod r$ to mean that for some $a\in\mathbb{N}$, $ai=j\mod r$, or equivalently that $j$ is an integer combination of $i$ and $r$. We note further that by definition of $(p_n)$, for all $k,n\in \mathbb{N}$, 
\begin{align}\label{monotonicity}
    n\in \operatorname{PS}(X)\implies kn\in \operatorname{PS}(X).
\end{align}
This means that if $i\in B^c$, and $i|j \mod r$ , then $j\in B^c$. Indeed, suppose $K\leq i+kr\in \operatorname{PS}(X)$ and $a(i+kr)=j+br$ for some constants $a,b,$ and $k$. Then $K\leq j+br\in \operatorname{PS}(X)$, so $j\in B^c$.

We now show that 
\begin{align}\label{divisors}
    \bigcup_{i\in B^c}\{ m=i \mod r \}=\bigcup_{i\in B^c} i\mathbb{N}.
\end{align}

On the one hand, for fixed  $n\in \mathbb{N}$, we have $i|ni \mod r$. Hence $ni =j+kr$ for some $j\in B^c$, implying $$\bigcup_{i\in B^c}\{ m=i \mod r \}\supseteq\bigcup_{i\in B^c} i\mathbb{N}.$$

On the other hand, if $m=i+kr$, we can write $m=k'\gcd(i,r)$ for some $k'\in\mathbb{N}$. Bézout's Lemma, Theorem 2.10 in \cite{judsonAbstractAlgebraTheory2024}, tells us that $i|\gcd(i,r)\mod r$, and so  $\gcd(i,r)\in B^c$. This means $$\bigcup_{i\in B^c}\{ m=i \mod r \}\subseteq\bigcup_{i\in B^c} i\mathbb{N},$$ establishing (\ref{divisors}).

Applying this equality to (\ref{PSisZcomplement}), we get that

\begin{align*}
\operatorname{PS}(X)&= F'\cup \bigcup_{i\in B^c}\{ m=i \mod r, m\geq K \} \\ 
&= F'\cup \bigcup_{i\in B^c}\{ m=i \mod r\}\cap\{m\geq K \} \\ \\
&\stackrel{(\ref{divisors})}{=} F'\cup \bigcup_{i\in B^c} i\mathbb{N} \cap \{ m\geq K \}\\
&\stackrel{(\ref{monotonicity})}{=}\bigcup_{d\in F'}d\mathbb{N} \cup\bigcup_{i\in B^{c}} i\left( \mathbb{N}+\left\lfloor  \frac{K}{i}  \right\rfloor  \right)  \\
\end{align*} Both $F'$ and $B^c$ are finite, giving the desired expression.

Finally, if $\zeta(t)=\frac{R(t)}{S(t)}$ for some polynomials $R,S\in \mathbb{Q}[t]$, then $$\frac{d}{dt}\ln\zeta(t)=\frac{R'(t)}{R(t)}-\frac{S'(t)}{S(t)}.$$ This expression is rational, so $\zeta(t)$ satisfies the assumptions of the theorem, and the conclusion holds. 
\end{proof}

\section{Least Period Sets}\label{LPS}

\subsection{Entropy and Finitely Presented  Systems}\label{sec-fp}
For our result on least period sets, we will need some tools from topological dynamics, which we develop here. For a detailed introduction, we refer to \cite{WaltersIntroductionErgodicTheory2000}.

Let $(X,d)$ be a compact metric space and $f:X\to X$ our transformation of interest. The transformation $f$ induces a new metric
$$d_n(x,y)=\max_{0\leq i < n} d(f^ix,f^iy)$$
for each $n\in \mathbb{N}$. We say that a set of points $E$ is $(n,\varepsilon)$-\textit{separated} if for $x,y\in E$, $x\neq y$, we have $d_n(x,y)>\varepsilon$. 
Let $$s_n(\varepsilon):=\max_{E\subset X} \{|E|: E \text{ is $(n,\varepsilon)$-separated}\},$$
which is finite for fixed $n,\varepsilon$ by compactness of $X$. 

We can now state one way of defining the topological entropy of a dynamical system. This particular definition is due to Bowen, see Chapter 7.2 in \cite{WaltersIntroductionErgodicTheory2000}.  

\begin{definition}
    The topological entropy of a dynamical system $(X,f)$ as above is $$h(f)=\lim_{\varepsilon \searrow 0} \limsup_{n\to\infty} \frac{1}{n} \log s_n(\varepsilon).$$
\end{definition}

The first half of the next lemma is well-known and follows for example from combining Theorem 7.2 and Theorem 7.8 in  \cite{WaltersIntroductionErgodicTheory2000}. A purely topological proof of the second half can be found in chapter IX, Theorem 1.8 of \cite{robinson1995dynamical}, which uses Theorem 17 from \cite{BowenEntropyGroupEndo}.

\begin{lemma}\label{lem-bdtoonepreservesentropy}
    If $\pi:(X,f)\to (Y,g)$ is a factor map between dynamical systems, then 
    $$h(g)\leq h(f).$$ Moreover, if $\pi$ is also bounded-to-one, then $$h(g)=h(f).$$
\end{lemma}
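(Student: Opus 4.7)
The plan is to establish the two inequalities separately, both through the $(n,\varepsilon)$-separated set definition of topological entropy. For $h(g) \le h(f)$, I would exploit uniform continuity of $\pi$ (which holds since $X$ is compact): for each $\varepsilon > 0$ choose $\delta > 0$ with $d_X(x,x') < \delta \Rightarrow d_Y(\pi x, \pi x') < \varepsilon$. Given an $(n,\varepsilon)$-separated set $E \subset Y$ for $g$, pick one preimage $\tilde y \in \pi^{-1}(y)$ for each $y \in E$ to obtain $\tilde E \subset X$. By the commutation $\pi \circ f^i = g^i \circ \pi$ and the contrapositive of uniform continuity, $\tilde E$ is $(n,\delta)$-separated for $f$, so $s_n^Y(\varepsilon) \le s_n^X(\delta)$. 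Taking $\limsup_n \tfrac{1}{n}\log$ and letting $\varepsilon \searrow 0$ gives $h(g) \le h(f)$.

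For the reverse inequality $h(f) \le h(g)$ under the bounded-to-one hypothesis, my approach is to invoke Bowen's fiber entropy inequality,
$$h(f) \le h(g) + \sup_{y \in Y} h(f, \pi^{-1}(y)),$$
where $h(f, K)$ denotes the topological entropy of $f$ computed using $(n,\varepsilon)$-separated subsets of a (not necessarily invariant) compact set $K$. Since every fiber $\pi^{-1}(y)$ has cardinality at most $M$, any $(n,\varepsilon)$-separated subset of a fiber has at most $M$ points, so $h(f, \pi^{-1}(y)) = 0$ uniformly in $y$ and the desired inequality follows immediately.

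The main obstacle is Bowen's fiber inequality itself, whose proof is a careful covering argument: one covers $Y$ by finitely many small sets and shows that above each, the number of $(n,\varepsilon)$-balls in $X$ needed to capture the dynamics is controlled by the $(n,\delta)$-cover number of $Y$ multiplied by a fiberwise contribution. Since this is exactly the content of the cited Theorem 17 in \cite{BowenEntropyGroupEndo}, I would invoke it rather than reprove it here, which is also the approach taken in the textbook reference \cite{robinson1995dynamical}.
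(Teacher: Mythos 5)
Your proposal is correct and matches the paper's treatment: the paper does not reprove this lemma but cites the standard sources, namely Walters for the inequality $h(g)\leq h(f)$ (whose proof is the uniform-continuity/separated-set argument you sketch) and Robinson's Theorem IX.1.8, which rests on Theorem 17 of \cite{BowenEntropyGroupEndo}, i.e.\ exactly the fiber entropy inequality you invoke, with the fiber term vanishing because fibers have at most $M$ points. So you are following essentially the same route as the paper's cited proofs.
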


We will also use a property called \textit{entropy-minimality}, which says that any proper subsystem must have strictly smaller entropy than the whole system. 

\begin{definition}
    A dynamical system $(X,f)$ is entropy-minimal if, whenever $Y$ is a closed invariant proper subset of $X$, we have $$h(f|_Y)<h(f).$$
\end{definition}

Irreducible SFTs, for example, have this property. The following lemma is Theorem 4.4.7 in \cite{Lind_Marcus_1995}.

\begin{lemma}\label{lem-SFTentropyminimal}
    Suppose $(X,\sigma)$ is an irreducible SFT. Then $(X,\sigma)$ is entropy-minimal.
\end{lemma}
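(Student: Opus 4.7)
The plan is to present $X$ as an edge shift on an irreducible graph $G$ and then invoke the strict monotonicity of the Perron eigenvalue when one removes vertices from such a graph. Under the paper's standing assumption, $X$ is a one-step SFT, so it is conjugate to the edge shift $X_G$ for some graph $G$ whose adjacency matrix $A$ is irreducible, and its topological entropy equals $\log\lambda_A$, where $\lambda_A$ is the Perron eigenvalue of $A$.

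Given a proper closed shift-invariant subset $Y\subsetneq X$, I would first use the fact that shift spaces are determined by the collection of finite words occurring in them to produce a word $w$ that appears in some point of $X$ but in no point of $Y$. Passing to the higher block representation of order $|w|$ gives a conjugate edge shift $X_{G'}$ on an irreducible graph $G'$ in which $w$ corresponds to a single vertex $v$; the recoded copy of $Y$ then avoids $v$ entirely, and so is contained in the edge shift on the graph $G''$ obtained from $G'$ by deleting $v$ together with all incident edges.

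The crucial ingredient is the Perron-Frobenius fact that for an irreducible nonnegative matrix $A'$, every proper principal submatrix $A''$ satisfies $\lambda_{A''}<\lambda_{A'}$. Combined with monotonicity of entropy under inclusion of shift spaces (immediate either from Bowen's definition, or from counting words in the respective languages), this yields
$$h(Y)\leq h(X_{G''})=\log\lambda_{A''}<\log\lambda_{A'}=h(X),$$
as required. The main obstacle is the strict Perron-Frobenius monotonicity, but this is a classical result — one proof restricts the strictly positive Perron eigenvector of $A'$ to the index set of $A''$ to produce a strict gap — and can be cited directly, e.g.\ as Corollary 4.4.8 of \cite{Lind_Marcus_1995}. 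The only other subtlety is ensuring that a forbidden word actually exists, which is where properness of $Y$ and the equivalence between shift spaces and their languages enter.
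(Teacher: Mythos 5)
Your proposal is correct and is essentially the standard argument: the paper itself gives no proof of this lemma, citing it directly as Theorem~4.4.7 of \cite{Lind_Marcus_1995}, and the route you describe — recode via a higher block presentation so that a word forbidden in the proper subshift $Y$ becomes a single symbol, then invoke strict Perron--Frobenius monotonicity for proper principal submatrices of an irreducible nonnegative matrix — is precisely the argument found there. The only cosmetic point is whether after recoding the word $w$ sits on a vertex or an edge of the presenting graph (which depends on whether you take the higher block shift of order $|w|$ or $|w|+1$ and whether you work with vertex or edge shifts), but this is immaterial since deleting either a vertex or an edge of an irreducible graph strictly drops the Perron eigenvalue.
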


Combining this result with Lemma \ref{lem-bdtoonepreservesentropy} immediately shows that entropy-minimality holds for any bounded-to-one factor of an irreducible SFT. 

\begin{corollary}\label{cor-FPentropyminimal}
    Suppose $\pi:X\to\Omega$ is a bounded-to-one factor map from an irreducible SFT $X$ to some dynamical system $(\Omega,f)$. Then $(\Omega,f)$ is entropy-minimal. In particular, topologically transitive FP systems are entropy-minimal.
\end{corollary}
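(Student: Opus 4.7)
The plan is to reduce the statement to the already-established entropy-minimality of irreducible SFTs (Lemma \ref{lem-SFTentropyminimal}) by pulling back a hypothetical proper invariant subsystem of $\Omega$ through $\pi$. Concretely, suppose $Y\subsetneq \Omega$ is closed and $f$-invariant. I would first observe that $\pi^{-1}(Y)$ is closed in $X$ (continuity of $\pi$) and $\sigma$-invariant (by the commuting square defining a factor map: $\pi(\sigma x)=f(\pi x)\in f(Y)\subset Y$). It is also a \emph{proper} subset of $X$, since if $\pi^{-1}(Y)=X$ then surjectivity of $\pi$ would give $\Omega=\pi(X)\subset Y$, contradicting $Y\subsetneq\Omega$.

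Next I would note that restricting $\pi$ to $\pi^{-1}(Y)$ produces a surjective factor map $\pi|_{\pi^{-1}(Y)}:\pi^{-1}(Y)\to Y$ (surjectivity is immediate: any $y\in Y$ has a preimage in $X$ which automatically lies in $\pi^{-1}(Y)$), and it remains bounded-to-one with the same bound $M$ as $\pi$. Hence Lemma \ref{lem-bdtoonepreservesentropy} applies in two places:
\begin{equation*}
h(f)=h(\sigma)\qquad\text{and}\qquad h(f|_Y)=h(\sigma|_{\pi^{-1}(Y)}).
\end{equation*}
Since $X$ is an irreducible SFT, Lemma \ref{lem-SFTentropyminimal} gives $h(\sigma|_{\pi^{-1}(Y)})<h(\sigma)$, and chaining the three relations yields $h(f|_Y)<h(f)$, which is entropy-minimality of $(\Omega,f)$.

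For the ``in particular'' clause, I would invoke the fact (to be stated or recalled in Section \ref{sec-fp}) that a topologically transitive FP system $(\Omega,f)$ admits a bounded-to-one factor map from an \emph{irreducible} SFT: expansiveness of $f$ forces any factor map from an SFT presentation to be bounded-to-one, and topological transitivity of $\Omega$ allows one to restrict to an irreducible component of the covering SFT whose image still covers $\Omega$. Once that presentation is in hand, the first part of the corollary applies directly.

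The argument itself is almost purely formal once the right two lemmas are in place; the only genuinely delicate point is the reduction to an \emph{irreducible} SFT cover in the ``in particular'' clause, where one must be careful that restricting to an irreducible component of the SFT presentation does not shrink the image below $\Omega$. This is the step I would expect to spend the most care on, and it is the reason the hypothesis of topological transitivity on $\Omega$ is needed rather than something weaker.
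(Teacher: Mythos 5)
Your proof of the main statement is essentially the paper's: pull back the proper closed invariant set $Z\subsetneq\Omega$ to $\pi^{-1}(Z)$, note it is a proper subshift, and chain Lemma \ref{lem-bdtoonepreservesentropy} (applied to $\pi$ and to $\pi|_{\pi^{-1}(Z)}$) with Lemma \ref{lem-SFTentropyminimal}. You spell out the surjectivity and properness checks that the paper leaves tacit, but the route is the same.

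Your sketch of the ``in particular'' clause, however, rests on a false claim. You assert that ``expansiveness of $f$ forces any factor map from an SFT presentation to be bounded-to-one.'' This is not true: take $X=\{0,1,2\}^{\Z}$, $\Omega=\{0,1\}^{\Z}$, and the one-block factor map that sends $0\mapsto 0$ and $1,2\mapsto 1$. Here $X$ is an SFT, $\Omega$ is expansive, yet $\pi^{-1}(1^{\infty})=\{1,2\}^{\Z}$, so $\pi$ is wildly infinite-to-one. Expansiveness of the target by itself gives you nothing about fibre sizes. What the paper actually uses is the Markov-partition structure: Theorem \ref{thm-FPiffMP} lets one choose an SFT cover $\pi:X\to\Omega$ arising from a Markov partition, and Proposition \ref{prop-MPconsequences} (\ref{prop-MPconsbounded}) shows \emph{that particular} $\pi$ is bounded-to-one, while Proposition \ref{prop-MPconsequences} (\ref{prop-MPconstrans}) shows that transitivity of $\Omega$ makes $X$ irreducible. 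The ``restrict to an irreducible component'' move you propose is also unnecessary once the Markov-partition cover is taken, since that cover is already irreducible; if instead you started from an arbitrary SFT cover, showing that some irreducible component still surjects onto $\Omega$ is itself nontrivial and you would need to argue it rather than assert it. So: the reduction to Lemmas \ref{lem-bdtoonepreservesentropy} and \ref{lem-SFTentropyminimal} is right, but the supply of a bounded-to-one irreducible SFT cover should come from the Markov-partition machinery, not from expansiveness alone.
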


\begin{proof}
    Let $Z\subsetneq\Omega$ be a proper subsystem. Since $Z$ is closed and $f$-invariant, $Y:=\pi^{-1}(Z)$ is compact and a strict subshift of $X$. Moreover, since $\pi$ is bounded-to-one, so is $\pi':=\pi|_Y$, giving the following diagram: $$\begin{tikzcd}
Y \arrow[d, "\pi'"] \arrow[r, hook] & X \arrow[d, "\pi"] \\
Z \arrow[r, hook]                                           & \Omega            
\end{tikzcd}$$ 
    Thus,
    $$h(f|_Z) = h(Y) <h(X)=h(f).$$
\end{proof}

Finally, the following well-known lemma on entropy for expansive homeomorphisms is in \cite{bowen1978axiom}.

\begin{lemma}\label{lem-lowerboundentropyperiodic}
    When $(\Omega,f)$ is an expansive homeomorphism, we have
    $$h(f)\geq \limsup_{n\to\infty} \frac{1}{n}\log(p_n(f)).$$
\end{lemma}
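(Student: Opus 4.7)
The plan is to exploit expansivity to show that for the expansive constant $\varepsilon$, every set of distinct $n$-periodic points is in fact $(n,\varepsilon)$-separated. Once that is in hand, we get $p_n(f)\leq s_n(\varepsilon)$ for all $n$, and the lemma follows immediately from the definition of topological entropy together with the monotonicity of $s_n(\varepsilon)$ in $\varepsilon$.

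First I would let $\varepsilon>0$ be an expansive constant for $f$ and pick two distinct points $x,y\in P_n(f)$. By expansivity there is some $k\in\Z$ with $d(f^kx,f^ky)>\varepsilon$. Since $f$ is a homeomorphism and $f^nx=x$, $f^ny=y$, the orbits of $x$ and $y$ depend only on $k\bmod n$. Writing $k=qn+r$ with $0\leq r<n$, I would observe
$$d(f^rx,f^ry)=d(f^{k-qn}x,f^{k-qn}y)=d(f^kx,f^ky)>\varepsilon,$$
so in particular $d_n(x,y)=\max_{0\leq i<n}d(f^ix,f^iy)>\varepsilon$. Hence the whole set $P_n(f)$ is $(n,\varepsilon)$-separated, and so $p_n(f)\leq s_n(\varepsilon)$.

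Finally, taking logarithms, dividing by $n$, and passing to $\limsup_{n\to\infty}$ gives
$$\limsup_{n\to\infty}\frac{1}{n}\log p_n(f)\leq \limsup_{n\to\infty}\frac{1}{n}\log s_n(\varepsilon).$$
Since $s_n(\delta)$ is non-decreasing as $\delta\searrow 0$, the right-hand side is bounded above by $\lim_{\delta\searrow 0}\limsup_{n\to\infty}\frac{1}{n}\log s_n(\delta)=h(f)$, completing the argument.

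There is no real obstacle here: the only point that requires a moment of care is handling the fact that expansivity a priori supplies an index $k\in\Z$ possibly outside $\{0,\dots,n-1\}$, which is why I reduce $k$ modulo $n$ using periodicity and the fact that $f$ is invertible. Everything else is a direct application of the Bowen definition of entropy recalled in the excerpt.
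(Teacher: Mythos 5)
Your proposal is correct and follows essentially the same route as the paper: use expansivity plus periodicity to reduce the separating time to the window $\{0,\dots,n-1\}$, conclude $P_n(f)$ is $(n,\varepsilon)$-separated so $p_n(f)\leq s_n(\varepsilon)$, then pass to the $\limsup$ and let $\varepsilon\searrow 0$ in Bowen's definition. The only differences are cosmetic: you spell out the reduction of $k$ modulo $n$ and the final limit step, which the paper leaves implicit.
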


\begin{proof}
    Let $\delta$ denote an expansive constant for $f$, and let $\varepsilon<\delta.$ Let $x,y\in P_n(f).$ By expansiveness, for some $i\in\Z$,
    $$d(f^ix,f^iy)>\delta>\varepsilon.$$
    But, since $x,y$ are periodic with period $n$, this means $$d_n(x,y)=\max_{0\leq i < n} d(f^ix,f^iy)>\varepsilon.$$
    That is, $P_n(f)$ is $(n,\varepsilon)$-separated for all $n$, and we get that $$s_n(\varepsilon)\geq p_n(f)$$
    for all $n$. The conclusion follows.
\end{proof}


Recall from the introduction that a finitely presented system is an expansive factor of an SFT. We introduce briefly an equivalent definition of FP given by Fried in \cite{FriedFPSystems}.
Suppose $f:\Omega\to\Omega$ is a factor of an SFT $X\subset \mathcal{A}^\Z$ on alphabet  via $\pi:X\to\Omega$. Then we have an equivalence relation $E$ on $X$ given by
$$xEy \iff \pi (x)=\pi(y),$$
and we can write $\Omega = X/E$. 
Now, if we view $E$ as a subshift of $(\mathcal{A}\times\mathcal{A})^\Z$, \cite{FriedFPSystems} introduces the term finitely presented for such $f$ where $E$ is an SFT.\footnote{The terminology here is loosely based on that of finitely presented groups, in the sense that SFTs are ``like'' finitely generated groups. See \cite{FriedFPSystems} for more details.} The same reference also provides the following fundamental theorem, which in particular gives the equivalence of the two definitions.


\begin{theorem}[Fried \cite{FriedFPSystems}]\label{thm-FPiffMP}
    $\Omega=X/E$ for some SFTs $X,E$ if and only if $(\Omega,f)$ is an expansive factor of an SFT if and only if $\Omega$ admits a Markov partition.
\end{theorem}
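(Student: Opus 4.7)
The plan is to prove the cycle $(a) \Rightarrow (b) \Rightarrow (c) \Rightarrow (a)$. Each arrow either builds symbolic data (an SFT, or an equivalence-relation SFT) from geometric data (expansiveness, a partition), or does the reverse. The unifying idea is that expansiveness lets one ``detect'' points of $\Omega$ from finite windows of a symbolic cover.

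For $(a) \Rightarrow (b)$, if $\Omega = X/E$ for SFTs $X$ and $E$, then the quotient map $\pi: X \to \Omega$ is a continuous surjection intertwining $\sigma$ with the induced map $f$, hence a factor map. To get expansiveness, I would fix $N$ so that $E$ is $N$-step as a subshift of $(\mathcal{A} \times \mathcal{A})^{\mathbb{Z}}$. Then $\pi(x) \neq \pi(y)$ forces $(x, y) \notin E$, so for some $i$ the length-$N$ pair $(x_{[i,i+N]}, y_{[i,i+N]})$ is forbidden, and the quotient metric separates the corresponding cylinder images in $\Omega$ by a uniform amount, yielding an expansive constant.

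For $(b) \Rightarrow (c)$, given an expansive factor map $\pi: X \to \Omega$ with $X$ a one-step SFT on alphabet $\mathcal{A}$, I pass first to a higher-block presentation so each image $R_a := \pi(C_a)$ has diameter below the expansive constant. The finite family $\{R_a\}$ then separates orbits, and I verify the Markov intersection property using the one-step structure of $X$ together with surjectivity of $\pi$. Conversely, for $(c) \Rightarrow (a)$, a Markov partition $\{R_1, \dots, R_n\}$ produces an SFT $X$ on alphabet $\{1, \dots, n\}$ with transitions $i \to j$ recording the allowed rectangle overlaps, together with the itinerary factor map $\pi(x) = \bigcap_k f^{-k}(\overline{R_{x_k}})$, well-defined by expansiveness and Markov contraction. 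For $E := \{(x,y) : \pi(x) = \pi(y)\}$ to be an SFT, I would argue that two itineraries coincide precisely when each pair $(x_k, y_k)$ belongs to a finite list of ``overlap types'' recording which rectangle boundaries can meet at a common point---a one-step constraint on the pair sequence, hence SFT.

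The main obstacle will be the Markov partition construction in $(b) \Rightarrow (c)$, and the closely related finiteness of overlap types in $(c) \Rightarrow (a)$. Both amount to promoting the pointwise metric gap provided by the expansive constant into a uniform combinatorial window size, using compactness of $X$ and careful control of rectangle boundaries. Everything else---surjectivity, continuity, shift-equivariance of the constructed maps---falls out of the formal properties of quotients and factor maps.
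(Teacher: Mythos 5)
First, a remark on the comparison itself: the paper does not prove this statement — it is imported verbatim from Fried, and only its consequences (via Proposition \ref{prop-MPconsequences}) are used later — so there is no in-paper argument to measure you against; I am reviewing your outline on its own terms. Your cycle $(a)\Rightarrow(b)\Rightarrow(c)\Rightarrow(a)$ is a sensible skeleton, and $(a)\Rightarrow(b)$ is essentially right: if $\pi(x)\neq\pi(y)$ then some shift of the pair $(x,y)$ exhibits one of the finitely many forbidden $E$-words at the origin, and since the set $K$ of pairs in $X\times X$ exhibiting a forbidden word at the origin is compact and disjoint from $E=(\pi\times\pi)^{-1}(\Delta)$, the continuous function $(u,v)\mapsto d(\pi u,\pi v)$ has a positive minimum on $K$, which serves as the expansive constant. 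That compactness step is the ``uniform amount'' you gesture at, and it works.

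The genuine gaps are in the other two arrows, which is exactly where the substance of Fried's theorem lives. In $(b)\Rightarrow(c)$, setting $R_a=\pi(C_a)$ for a fine higher-block recoding does \emph{not} yield a Markov partition: these sets are generally not pairwise disjoint in interior — for a sofic shift presented by a labeled graph, two distinct edges carrying the same label can even give $\pi(C_e)=\pi(C_{e'})$ — and nothing makes them proper rectangles. What you get for free is only a \emph{cover} by rectangle-like sets (the bracket exists because, for $x_0=y_0=a$, the spliced point $z$ with $z_i=y_i$ for $i\le 0$ and $z_i=x_i$ for $i\ge 0$ lies in the one-step SFT $X$, and $\pi z$ plays the role of $[\pi y,\pi x]$). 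The actual content — the Bowen/Fried refinement of such a cover into proper rectangles with disjoint interiors while preserving the Markov property, a delicate boundary-control argument — is precisely what your phrase ``verify the Markov intersection property'' conceals; the obstacle is not converting a pointwise metric gap into a uniform window size. Similarly, in $(c)\Rightarrow(a)$ the claim that $\pi(x)=\pi(y)$ is equivalent to a one-step condition on the pairs $(x_k,y_k)$ is the theorem, not an observation: $\pi(x)=\pi(y)$ demands a \emph{single} point $w$ with $f^kw\in R_{x_k}\cap R_{y_k}$ for all $k$, so you must show that local consistency of the pair itinerary forces a global realizing point, which uses the product (bracket) structure of the rectangles rather than a mere list of which boundaries can meet. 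One practical suggestion: the decomposition $(c)\Rightarrow(b)$ (itinerary map), $(b)\Rightarrow(a)$ (show directly that $E=\{(x,y):d(\pi\sigma^nx,\pi\sigma^ny)\le c \text{ for all } n\}$ by expansiveness, and deduce finite type from uniform continuity of $\pi$ via the standard splicing characterization of SFTs), and $(a)\Rightarrow(c)$ isolates the unavoidable Markov-partition construction in a single arrow and lets the other two be completed rigorously in a few lines.
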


These \textit{Markov partitions} have a long history dating back to Sinai in his work on \textit{Anosov diffeomorphisms} \cite{SinaiMarkovPartitions} and work on toral automorphisms by Adler and Weiss \cite{AdlerWeissMarkovPartitions}. They were further developed by Bowen as a tool in better understanding what are known as \textit{Axiom A diffeomorphisms} \cite{BowenEqStatesErgodicTheory}, and generalized by Fried into FP systems. We also point out that \cite{AshleyKitchensBoundariesOM} collects many of the results on Markov partitions in generality.

Often, Markov partitions are constructed with a particular class of dynamical system in mind, so there is some variance in the definitions across the literature. For completeness, we give a definition of Markov partitions following \cite{FriedFPSystems}, who in turn follows \cite{BowenEqStatesErgodicTheory}. The definition is somewhat involved, but the main point for us is that a Markov partition yields an SFT and a factor map with properties outlined in Proposition \ref{prop-MPconsequences}. 

Let $(\Omega,f)$ be an expansive dynamical system, with expansive constant $c>0$. For $\varepsilon>0$, define the $\varepsilon$-stable and $\varepsilon$-unstable sets $$W_\varepsilon^s(x) = \{y\in \Omega: d(f^nx,f^ny)\leq \varepsilon:n\geq 0\},\quad W_\varepsilon^u(x) = \{y\in \Omega: d(f^nx,f^ny)\leq \varepsilon:n\leq 0\}.$$  

Taking $\varepsilon\leq c/2$, $W_\varepsilon^u(x)\cap W_\varepsilon^s(y)$ consists of at most one point for any $x,y\in\Omega$. Let $$D_\varepsilon:\{(x,y)\in \Omega\times \Omega:W_\varepsilon^u(x)\cap W_\varepsilon^s(y)\neq \emptyset\}.$$
A set $R\subset \Omega$ is a \textit{rectangle} if $R\times R\subset D_\varepsilon$. Finally, a \textit{Markov partition} for $(\Omega,f)$ is a finite cover $\mathcal{M}=\{R_i\}_{i=1}^n$ of $\Omega$ consisting of nonempty rectangles with disjoint interiors of diameter less than $\varepsilon$ such that $\overline{\operatorname{int}R}=R$ for all $R\in \mathcal{M}$, satisfying the \textit{Markov property}: Whenever $x\in \operatorname{int}R_i$ and $fx\in \operatorname{int}R_j$, $$f\big(W_\varepsilon^s(x) \cap R_i\big)\subset R_j,\quad f^{-1}\big(W_\varepsilon^s(fx) \cap R_j\big)\subset R_i.$$

Given the Markov partition $\mathcal{M}=\{R_i\}_{i=1}^n$, we define a transition matrix $$A_{ij}=\begin{cases}
    1,& \text{if }\operatorname{int}(R_i)\cap f^{-1}(\operatorname{int}(R_j)) \neq \emptyset, \\
    0,& \text{otherwise,}
\end{cases}$$
and denote the shift space generated by this matrix $X_A$.
We then obtain a factor map $\pi:X_A\to \Omega$ defined by
$$\pi(...x_{-1}x_0x_1...) = \bigcap_{i\in\Z} f^{-i}(R_{x_i}).$$
This map is well-defined, continuous, and makes the following diagram commute. $$\begin{tikzcd}
X \arrow[r, "\sigma_A"] \arrow[d, "\pi"] & X \arrow[d, "\pi"] \\
\Omega \arrow[r, "f"]                        & \Omega                  
\end{tikzcd}$$ In fact, $\pi$ has several other desirable properties.

\begin{proposition}\label{prop-MPconsequences}
    Let $\pi:X\to\Omega$ be a factor map arising from a Markov partition of $\Omega$. Then the following hold.
    \begin{enumerate}[(A)]
    \item\label{prop-MPconsbounded} There exists an $M\in\N$ such that $\sup_{w\in\Omega} |\pi^{-1}w|\leq M$, i.e. $\pi:X\to\Omega$ is \textit{bounded-to-one}. 
    \item\label{prop-MPconsperiodic} If $w\in \Omega$ is periodic, all points in $\pi^{-1}w$ must be periodic. Moreover, any pair of points $x,y\in \pi^{-1}w$ must be mutually separated, meaning $x_i\neq y_i$ for all $i\in\Z$. 
    \item\label{prop-MPconsunique} The set of $w\in\Omega$ such that $|\pi^{-1}w|=1$ is a dense $G_\delta$ set, and in particular nonempty. 
    \item\label{prop-MPconstrans} If $\Omega$ is topologically transitive, then $X$ is irreducible. Moreover, if $\Omega$ is mixing, then $X$ is mixing. 
    
\end{enumerate}
\end{proposition}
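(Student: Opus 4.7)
The plan is to handle each part in turn, leveraging expansiveness of $f$ and the combinatorial/topological structure of the Markov partition. For part (\ref{prop-MPconsbounded}), the bound $M$ follows from a standard feature of Markov partitions: rectangles can be taken to have diameter less than an expansive constant for $f$, and any two preimages $x,y\in\pi^{-1}(w)$ must satisfy $f^i(w)\in R_{x_i}\cap R_{y_i}$ for every $i$, so the symbolic coordinates are pinned down up to the uniformly bounded multiplicity of the partition (the maximum number of rectangle closures that can meet at a common point, which is finite since the partition is). I would cite the Markov partition construction for this rather than re-deriving it.

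For part (\ref{prop-MPconsperiodic}), the commutativity $\pi\circ\sigma = f\circ\pi$ shows that $\sigma^n$ maps the finite set $\pi^{-1}(w)$ into itself whenever $w$ has period $n$, hence permutes it, so every $x\in\pi^{-1}(w)$ is $\sigma$-periodic. For mutual separation, I would argue by contradiction: if $x\neq y$ in $\pi^{-1}(w)$ agreed at some coordinate (WLOG the origin, after shifting), a common period $N$ of $x$ and $y$ would force agreement on all of $N\Z$. Then for each $m\in\Z$, the hybrid point $z^{(m)}$ defined by $z^{(m)}_i = x_i$ for $i<mN$ and $z^{(m)}_i = y_i$ for $i\geq mN$ is a valid one-step SFT point (the splicing transition at $mN$ is $x_{mN-1}x_{mN}$, which is allowed since $x_{mN}=y_{mN}$), and $\pi(z^{(m)})=w$ since $f^i(w)$ lies in the indicated rectangle at every coordinate. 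The $z^{(m)}$ are pairwise distinct because $x$ and $y$ differ somewhere in $[0,N)$ and hence at every shift of that coordinate by $mN$, yielding infinitely many preimages of $w$ and contradicting (A). For part (\ref{prop-MPconsunique}), the set of points with multiple preimages is contained in $\bigcup_{i\in\Z,\,k} f^{-i}(\partial R_k)$; each $\partial R_k$ is nowhere dense (rectangles have disjoint interiors and cover $\Omega$), continuity of $f$ preserves nowhere-denseness under preimage, and the union is countable, so its complement is a dense $G_\delta$ by the Baire category theorem, and any $w$ in this complement has a unique preimage because each $f^i(w)$ lies in the interior of exactly one rectangle.

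For part (\ref{prop-MPconstrans}), the hardest, the strategy is to lift structure from $\Omega$ to $X$ using (C). If $\Omega$ is topologically transitive, the points with dense orbit form a dense $G_\delta$; intersecting with the unique-preimage set from (C) is nonempty, so I can fix $w$ with dense orbit and unique preimage $x$, then show $x$ has dense orbit in $X$. Given an allowed word $u = u_{-k}\ldots u_k$ of $X$, the Markov condition ensures the open set $U = \bigcap_{i=-k}^{k} f^{-i}(\operatorname{int} R_{u_i})$ is nonempty; density of the orbit of $w$ produces $n$ with $f^n(w)\in U$, forcing $(x)_{n+i}=u_i$ since $f^{n+i}(w)$ lies in the interior of only $R_{u_i}$. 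The mixing case follows the same blueprint, replacing density with the stronger mixing property: for any two allowed words $u,v$ of $X$ one pulls back corresponding open sets $U,V\subset\Omega$ and uses $f^{-n}(V)\cap U\neq\emptyset$ for all large $n$ to obtain $\sigma^{-n}[v]\cap[u]\neq\emptyset$ in $X$. The main obstacle I anticipate is justifying that the open sets $U$ above are genuinely nonempty from only the one-step SFT structure; this requires appealing to the topological properties guaranteed by the Markov partition construction (nonempty intersections of compatible rectangle sequences) rather than just the bare transition-matrix definition.
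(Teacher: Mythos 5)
The paper does not supply its own proof of this proposition; it attributes (A) and (B) to Lemma 1.8 (the ``no diamonds'' property) in Ashley--Kitchens, points to the same reference for (C), and cites Theorem 3.19 of Bowen's lecture notes (plus a remark that the Axiom A proof carries over) for the forward direction of (D). So there is no in-paper argument to compare against directly; your proposal must be judged on its own merits, and it is broadly sound. For (A) you explicitly defer to the Markov-partition literature, which matches the paper; note though that the intermediate heuristic you offer (``bounded multiplicity at a point'') does not by itself bound $|\pi^{-1}w|$, since a priori distinct preimages could agree at some coordinates and disagree at others, so the count at a single coordinate doesn't control the count of full sequences. That bound really rests on the no-diamonds property, which also underlies (B) -- the paper cites a single lemma for both parts precisely because they are two faces of the same fact. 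Your splicing argument for (B) is correct as a proof of no-diamonds \emph{for periodic points} granting (A), and the permutation argument for periodicity of preimages is standard and fine.

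For (C) the Baire-category argument is the right one, but the justification ``continuity of $f$ preserves nowhere-denseness under preimage'' is false for general continuous maps; you need $f$ to be a homeomorphism (or at least open), which it is here since FP systems involve expansive homeomorphisms -- worth saying so. For (D), the gap you flag is real and is the crux: nonemptiness of $\bigcap_{|i|\le k} f^{-i}(\operatorname{int}R_{u_i})$ for every allowed word $u$ is not a consequence of the one-step transition matrix alone but of the Markov (stretching) property of the partition, and the step ``$f^{n+i}(w)\in\operatorname{int}(R_{u_i})$ forces $x_{n+i}=u_i$'' additionally uses that the rectangles are proper ($R=\overline{\operatorname{int}R}$). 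If you want to sidestep the Markov nonemptiness lemma, an alternative route that the paper's own citation suggests is to adapt Bowen's Theorem 3.19 argument directly, which works at the level of periodic orbits and specification-type behavior rather than interiors of rectangle intersections; that avoids relitigating the partition geometry.
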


Properties (\ref{prop-MPconsbounded}) and (\ref{prop-MPconsperiodic}) are consequences of Lemma 1.8 in \cite{AshleyKitchensBoundariesOM}, sometimes known as the ``no diamonds'' property of Markov partitions. For the latter, see Corollary 11 and Proposition 12 in \cite{ManningRationalZeta}, which refers to \cite{BowenMarkovPartitionsMinimalSets}.

One can also find (\ref{prop-MPconsunique}) in \cite{AshleyKitchensBoundariesOM}. From this property, we will actually only need the existence of a single point with a unique preimage in the presentation in the proof of Theorem \ref{thm-LPSofFP-intro}. Surprisingly, in \cite{MarcusEtAlEmbeddingTheorems}, there is significant interest in arbitrary SFT covers where at least one point has a unique preimage. We will touch on this later in this section.

The forward direction of (\ref{prop-MPconstrans}) is Theorem 3.19 in \cite{BowenEqStatesErgodicTheory} specifically for the Axiom A case, but the same proof generalizes to FP systems. We note also that the converse statements of (\ref{prop-MPconstrans}) are clearly true.

\subsection{Proof of Theorems \ref{thm-LPSofFP-intro} and \ref{thm-LPSofirredSFT-intro}}
We have collected all the necessary tools to prove the next main result. Along the way, we will obtain two intermediate results that are interesting in their own right. Intuitively, we will look at different ``layers'' of the system, where the layers are separated by the number of preimages a point has with respect to the cover arising from a Markov partition. The idea of this proof was first suggested by Doering in \cite{DoeringMasters} as a way of potentially characterizing the LPS of sofic shift spaces.

Let us begin the proof, by letting $(\Omega,f)$ be an FP system. Recall that our goal is to show that
$$\operatorname{LPS}(f)=F\cup\bigcup_{i=1}^n d_iS_i,$$
where $F$ is finite, each $S_i$ is cofinite, and either $F$ or the big union may be empty, but not both.

    Let $\mathcal{M}=\{R_i\}_{i=1}^{|\mathcal{M}|}$ be a Markov partition for $(\Omega,f)$, $X$ the associated SFT with transition matrix $A$, and $\pi:X\to\Omega$ the corresponding factor map. Moreover, let $$\Omega_i:=\{w\in\Omega: \pi^{-1}w \text{ has at least $i$ mutually separated preimages}\},$$
    where it is clear that $\Omega_1=\Omega$. We have that $\Omega_i$ is $f$-invariant, and $$\Omega_i\supset \Omega_{i+1}$$ for all $i$. Note that by Proposition \ref{prop-MPconsequences} (\ref{prop-MPconsbounded}), $\Omega_i$ is nonempty for only finitely many $i$. We will analyze each nonempty $\Omega_i$ separately,
    as if we are ``peeling back layers''.\footnote{Like an onion, perhaps.} The authors of \cite{AshleyKitchensBoundariesOM} chose independently to call $\Omega_2$ the \textit{core} of a Markov partition, invoking a similar image.

    Denote $$\tilde{P}_n(f,\pi):=\{w\in\Omega:f^n(w)=w \text{ and }|\pi^{-1}w|=1\},$$ and let $$\tilde{Q}_n(f,\pi):=\{w\in \tilde{P}_n(f,\pi): \forall k<n,f^kw\neq w \}$$ be the set of all periodic points with unique preimage and \textit{least} period $n$. It is immediate that \begin{equation}\label{Q_n<P_n}
    \tilde{Q}_n(f,\pi)=\tilde{P}_n(f,\pi)\setminus \bigcup_{d|n,d<n} \tilde{P}_d(f,\pi),
    \end{equation}
    which we will use later.

    We will now construct a set of adjacency matrices $A^{(i)}$ such that $X_i:=X_{A^{(i)}}$ is a bounded-to-one factor map onto $\Omega_i$. The idea is inspired by the set of matrices used by Manning to prove that Axiom A diffeomorphisms have rational zeta functions in \cite{ManningRationalZeta}.
    
    We will define $A^{(i)}$ to be the adjacency matrix of a directed graph $G_i$ whose vertices are the set of all $$V\subset\{1,2,3,...,|\mathcal{M}|\}$$  such that $|V|=i$ and $$\bigcap_{l\in V} R_l \neq \emptyset.$$ 
    
    We will draw an edge from vertex $U=\{u_1,...,u_i\}$ to vertex $V=\{v_1,...,v_i\}$ if and only if there exists a reordering of $V$ such that $$A_{u_k,v_{k}}=1$$ for all $1\leq k\leq i$. 

    Recall now that that $\pi(...x_{-1}x_0x_1...)$ is defined to be the unique point $w$ such that $$w\in \bigcap_{n\in\Z} f^{-i}(R_{x_n}).$$ There is a natural bounded-to-one factor map $\pi_i:X_i\to\Omega_i$ induced from $\pi$. If $x\in X_i$, then $x$ corresponds to at least $i$ mutually separated points $y^1,y^2,...,y^i\in X_1$, such that for all $n\in\Z$, $$\bigcap_{1\leq k\leq i} R_{y^k_n} \neq \emptyset.$$ Lemma 1 from \cite{ManningRationalZeta} deduces directly from the definition of $\pi$ and expansiveness of $f$ that $$\pi y^1=\pi y^2=\cdots=\pi y^i=w\in \Omega,$$ and we may thus set $$\pi_i x:=w,$$ to obtain the following commutative diagram. 
    $$\begin{tikzcd}
X_i \arrow[r, "\sigma_{A^{(i)}}"] \arrow[d, "\pi_i"] & X_i \arrow[d, "\pi_i"] \\
\Omega_i \arrow[r, "f|_{\Omega_i}"]                        & \Omega_i                  
\end{tikzcd}$$
    Since $X_i$ is compact, and $\pi_i$ is continuous, $\Omega_i$ is a compact subset of $\Omega$. 
    \begin{remark}
    In general, it is possible that for an edge in $G_i$ from $U$ to $V$, there exists some $u\in U$ such that $A_{uv}=1$ for two distinct $v\in V$, or some $v\in V$ such that $A_{uv}=1$ for two distinct $u\in U$. 
    
    However, if this happens, the edge from $U$ to $V$ cannot be part of an irreducible component of $G_i$. If it were, the edge would be part of a cycle, which contradicts Proposition \ref{prop-MPconsequences} (\ref{prop-MPconsperiodic}). If $x\in X_i$ is contained in an irreducible component, it thus corresponds to exactly $i$ mutually separated points in $X_1$. We will later only consider irreducible components of $X_i$, so we need not be concerned about the scenario above.
    \end{remark}

    Whenever we write $f$ in the remainder of the proof, we will always mean $f$ restricted to the appropriate invariant subset of $\Omega$. When there is no ambiguity, we allow ourselves to refrain from explicitly stating this restriction to avoid cumbersome notation.

    We briefly put the proof on hold to remark that we have effectively just shown that $(\Omega_i,f|_{\Omega_i})$ is FP for all $i$. Indeed, since $\Omega_i$ is a factor of the SFT $X_i$, the only other requirement is that $f|_{\Omega_i}$ is expansive by Theorem \ref{thm-FPiffMP}, but this is immediate since $f$ is expansive. This fact was already shown by a similar argument for $(\Omega_2,f|_{\Omega_2})$ in Observation 3.1 of \cite{AshleyKitchensBoundariesOM}. We encode this intermediate result as a theorem. 
    \begin{theorem}
    Let $(\Omega,f)$ be an FP system with SFT cover $X$ and factor map $\pi:X\to\Omega$ arising from a Markov partition. Let $$\Omega_i:=\{w\in\Omega: \pi^{-1}w \text{ has at least $i$ mutually separated preimages}\}.$$ Then if $\Omega_i$ is nonempty, $(\Omega_i,f|_{\Omega_i})$ is an FP system.
\end{theorem}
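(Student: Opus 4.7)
The plan is to reuse, essentially verbatim, the construction of $X_i$ and $\pi_i$ already sketched in the paragraphs preceding the theorem, and to verify that it witnesses $(\Omega_i,f|_{\Omega_i})$ as an expansive factor of an SFT, which is enough by Theorem \ref{thm-FPiffMP}.

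First I would recall the graph $G_i$ with vertex set consisting of those $i$-subsets $V\subset\{1,\dots,|\mathcal{M}|\}$ for which $\bigcap_{l\in V}R_l\neq\emptyset$, and with edges defined by the reordering condition on the original transition matrix $A$; let $A^{(i)}$ be its adjacency matrix and $X_i:=X_{A^{(i)}}$ the associated SFT. Then I would define $\pi_i:X_i\to\Omega_i$ on $x=(x_n)_{n\in\Z}$ by taking any one of the (at least $i$) mutually separated lifts $y^1,\dots,y^i\in X$ obtained by choosing coordinates from the $i$-sets $x_n$, and setting $\pi_i(x):=\pi(y^1)$; the cited Lemma 1 of \cite{ManningRationalZeta} shows that all $i$ lifts collapse to the same point of $\Omega$, so $\pi_i$ is well-defined, and commutation with the shift is built into the definition.

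Next I would check the three properties a factor map needs. Continuity of $\pi_i$ is inherited from continuity of $\pi$ on any one lift, and the shift-intertwining diagram commutes because the lifts do. Surjectivity onto $\Omega_i$ is the main point to verify: given $w\in\Omega_i$, pick $i$ mutually separated preimages $y^1,\dots,y^i\in X$; the sequence $x_n:=\{y^1_n,\dots,y^i_n\}$ is then a vertex of $G_i$ at each coordinate (because $w\in\bigcap_l f^{-n}(R_{y^l_n})$ is nonempty), and consecutive vertices are joined by an edge of $G_i$ by construction, so $x\in X_i$ and $\pi_i(x)=w$. Finally, $\pi_i$ is bounded-to-one (indeed by no more than $\binom{M}{i}$, with $M$ the global bound from Proposition \ref{prop-MPconsequences}(\ref{prop-MPconsbounded})), so $X_i$ is a genuine SFT presentation of $\Omega_i$.

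The only remaining ingredient is that $\Omega_i$ is a compact $f$-invariant subset on which $f$ is expansive. Compactness and invariance follow from $\Omega_i=\pi_i(X_i)$ together with continuity and the shift-commutation just verified; expansiveness of $f|_{\Omega_i}$ is immediate since any expansive constant for $f$ on $\Omega$ also works on the subset $\Omega_i$. At this point Theorem \ref{thm-FPiffMP} applies to the expansive factor map $\pi_i:X_i\to\Omega_i$ and concludes that $(\Omega_i,f|_{\Omega_i})$ is finitely presented.

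I do not expect any serious obstacle here; the only delicate step is the surjectivity verification, where one has to be careful that the $i$-set trajectory defined from $i$ mutually separated preimages actually yields an edge (and not a repeated vertex) at each step, which is precisely the condition encoded in the definition of $G_i$. All the other ingredients have been set up in the text preceding the statement.
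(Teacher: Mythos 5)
Your proposal is correct and follows essentially the same route as the paper: build the SFT $X_i$ on $i$-subsets of the Markov partition, use Manning's Lemma 1 to get a well-defined bounded-to-one factor map $\pi_i:X_i\to\Omega_i$, note that expansiveness of $f$ passes to $f|_{\Omega_i}$, and invoke Theorem \ref{thm-FPiffMP}. The only difference is that you spell out the surjectivity of $\pi_i$ (threading the $i$ mutually separated preimages into a point of $X_i$), a step the paper leaves implicit in calling $\pi_i$ the map ``induced from $\pi$.''
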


    We return to the proof. Since $\pi$ is bounded-to-one by Proposition \ref{prop-MPconsequences} (\ref{prop-MPconsbounded}), so is $\pi_i$ for all $i$, although the bounds vary in $i$. Indeed, if $$M(i):=\max\{|\pi_i^{-1}w|:w\in\Omega_i\},$$ then $M(i)$ may be as high as $\binom{M(1)}{i}$. 
    
    If $w\in\Omega$ is periodic and has exactly $i$ preimages under $\pi$, then these preimages are mutually separated by Proposition \ref{prop-MPconsequences} (\ref{prop-MPconsperiodic}), and $w$ must have a unique preimage under $\pi_i$.  In other words, if we write $M:=M(1)$, then 
    $$\operatorname{LPS}(f)=\bigcup_{1\leq i \leq M} \operatorname{supp}\left(\left|\tilde{Q}_n\left(f,\pi_i\right)\right|\right).$$ 

    Note also that if $j<i$, then $w$ must have at least two mutually separated preimages with respect to $\pi_j$, and  if $j>i$, then $w\notin \Omega_j$.

    For any $i$, it is possible that $X_i$ is not irreducible. However, since $X_i$ is an SFT, it consists of finitely many irreducible components and possibly some transient points, cf. Section 4.4 in \cite{Lind_Marcus_1995}.  Call these irreducible components $X_i^j$ for $1\leq j \leq k_i,$ where $k_i<\infty$. We will consider the images of these irreducible components separately.

    If a periodic point $w\in\Omega$ has a unique preimage $x$ under $\pi_i$, $x$ must be periodic, and hence cannot be transient. That is, $x$ must be in $X_i^j$ for some unique $j$. Thus, if we write $$\pi_i^j:=\pi_i|_{X_i^j},$$ we get $$\operatorname{LPS}(f)=\bigcup_{1\leq i \leq M}\bigcup_{1\leq j \leq k_i} \supp\left(\left|\tilde{Q}_n\left(f,\pi_i^j\right)\right|\right).$$

    Now, it is certainly true that a periodic point $w\in\Omega$ having $i$ preimages under $\pi$ means it has a unique preimage under $\pi_i$, and therefore that preimage $x$ must lie in $X_i^j$ for a unique $j$. However, it may transpire that $w$ has more than $i$ preimages under $\pi$, but a unique preimage with respect to $\pi_i^j$ for some (or several) $j$. For example, two preimages of $w$ with respect to $\pi_i$ could lie in separate irreducible components of $X_i$. In other words, $$\left\{\tilde{Q}_n\left(f,\pi_i^j\right)\right\}_{1\leq i \leq M}^{1\leq j \leq k_i}$$ need not form a partition of $Q_n(f)$. 
    
    Suppose a periodic point $w\in\Omega_i^j:=\pi_i^j(X_i^j)$ does not have a unique preimage with respect to $\pi_i$ (note the lack of superscript). Then $w$ will also be in $\Omega_{i'}^{j'}$ for some $i' > i$ and $j'\leq k_{i'}$. We can thus ignore the period of this point when analyzing $\Omega_i$, and handle it when we get to $\Omega_{i'}$. This means that if $\Omega_i^j$ contains no points with a unique preimage under $\pi_i$, we can ignore the index pair $(i,j)$, without losing any periods.
    
    At the same time, if there is $w\in \Omega_i^j$ such that $|\pi_i^{-1}w|=1$, then $\pi_i^{-1}w\in X_i^j$. Thus, if we denote $$\mathcal{G}:=\left\{(i,j):1\leq i\leq M,\;1\leq j\leq k_i,\; \exists w\in\Omega_i^j:|\pi_i^{-1}w|=1\right\},$$
    we can write
    $$\operatorname{LPS}(f)=\bigcup_{(i,j)\in\mathcal{G}}\supp\left(\left|\tilde{Q}_n\left(f,\pi_i^j\right)\right|\right).$$

    To ease the notation, we will write $$\tilde{p}_{n,i,j}:=\left|\tilde{P}_n\left(f,\pi_i^j\right)\right|, \quad \text{ and } \quad\tilde{q}_{n,i,j} := \left|\tilde{Q}_n\left(f,\pi_i^j\right)\right|,$$  in what follows.

    At this point, if we can show that for all $(i,j)\in\mathcal{G}$, 
    $$\supp\left(\tilde{q}_{n,i,j}\right)$$ is either a singleton $\{d\}$ or $dS$ for a cofinite set $S$ and a positive integer $d$, we are effectively done with the proof. This is exactly what we will do.

    Consider two cases. 
    If $X_i^j$ is a finite orbit of length $d$, then $\Omega_i^j$ is also a finite orbit of length $d$, because $(i,j)\in\mathcal{G}$. Hence,   
    $$\supp\left(\tilde{q}_{n,i,j}\right)=\{d\}.$$
    
    Otherwise, $X_i^j$ does not consist of a single orbit and some point in $\Omega_i^j$ has a unique preimage under $\pi_i$. We will see that this means $$\supp\left(\tilde{q}_{n,i,j}\right)=dS,$$
    where $S$ is cofinite and $d$ is the global period of $X_i^j$.

    To this end, we will first show that$$\lim_{n\to\infty} \frac{1}{dn} \log\left(\tilde{p}_{dn,i,j}\right)=\log(\lambda),$$ 
    which we will abbreviate 
    $$\tilde{p}_{dn,i,j}\sim \lambda^{dn},$$
    where $\lambda$ is such that  $h(X_i^j)=\log(\lambda)$. 

    Let $$\Omega':=\Omega_{i+1}\cap \Omega_i^j$$ and $X':=(\pi_i^j)^{-1}\left(\Omega'\right)$. Since $\Omega_{i+1}$ is closed and invariant, so is $\Omega'$. Moreover, $\Omega'$ is a proper subsystem of $\Omega_i^j$, because $(i,j)\in\mathcal{G}$, so $X'$ is a proper subshift of $X_i^j$. Recall also that if $w\in\Omega_i^j$ is periodic but has strictly more than $i$ mutually separated preimages with respect to $\pi$, then $w\in \Omega'$.

     Let $\mu$ denote the real number such that $h(X')=\log(\mu)$. We first observe then that  $f|_{\Omega'}$ is expansive because $f$ is, and since $\pi_i$ is bounded-to-one we get that 
     \begin{align*}
         \limsup_{n\to\infty}\frac{1}{n}\log|P_n(f|_{\Omega'})| \stackrel{\ref{lem-lowerboundentropyperiodic}}{\leq} h(f|_{\Omega'}) \stackrel{{\ref{lem-bdtoonepreservesentropy}}}{=}h(X') .
     \end{align*}
    Moreover, since $X'$ is a proper subsystem of $X_i^j$, by Lemma \ref{lem-SFTentropyminimal}, 
    \begin{equation}\label{mu<lam}
        \mu < \lambda.
    \end{equation}
    Now, for all $\varepsilon>0$, we have that for all $n$ large enough,
    $$\frac{1}{n}\log|P_n(f|_{\Omega'})| < \log(\mu)+\varepsilon,$$
    or equivalently
    \begin{equation*}
        |P_{n}(f|_{\Omega'})| < (\mu e^\varepsilon)^n.
    \end{equation*}
    We will pick $\varepsilon$ such that
    $$\mu e^\varepsilon < \lambda,$$
    which is possible by Equation (\ref{mu<lam}).     
     
     As a consequence of Theorem 4.5.11 in \cite{Lind_Marcus_1995} there exist $C, D>0$ such that for all large enough $n\in \N$,
     $$C\cdot \lambda^{dn} \leq |P_{dn}(X_i^j)| \leq D \cdot \lambda^{dn}.$$
     If $w\in\Omega_i^j$ has period $n$ and a unique preimage in $X_i^j$, this preimage must also be periodic with period $n$. Since $d$ is the global period of $X_i^j$, this means that a periodic point $w$ can only have a unique preimage if its period is a multiple of $d$. This yields
     \begin{align*}
         \tilde{p}_{dn,i,j} \leq |P_{dn}(X_i^j)| \leq D\cdot \lambda^{dn},
     \end{align*}
     for large enough $n$.
     
     On the other hand, every periodic point $w\in\Omega_i^j$ that does not have a unique preimage under $\pi_i^j$ lies in $\Omega'$. We remarked earlier that the converse need not be true, so while $$P_{dn}\left(f|_{\Omega_i^j}\right) = P_{dn}(f|_{\Omega'}) \cup \tilde{P}_{dn}\left(f,\pi_i^j\right),$$
     the sets on the right hand side need not be disjoint. We get the inequality $$\tilde{p}_{dn,i,j}+|P_{dn}(f|_{\Omega'})|\geq\left|P_{dn}\left(f|_{\Omega_i^j}\right)\right|.$$ 
     
     Next, the image of any periodic point of period $dn$ in $X_i^j$ must also have (not necessarily least) period $dn$. Some such points in $X_i^j$ may be collapsed into a single point in $\Omega_i^j$, but we know that $\pi_i$ is bounded-to-one with bound $M(i)$. We get $$\frac{1}{M(i)}|P_{dn}(X_i^j)|\leq \left|P_{dn}\left(f|_{\Omega_i^j}\right)\right|.$$
     Therefore, for all sufficiently large $n$,
     \begin{align*}
         \tilde{p}_{dn,i,j} &\geq \left|P_{dn}\left(f|_{\Omega_i^j}\right)\right| - |P_{dn}(f|_{\Omega'})| \\
         &\geq \frac{1}{M(i)}|P_{dn}(X_i^j)|-|P_{dn}(f|_{\Omega'})| \\
         &\geq \frac{C}{M(i)}\cdot\lambda^{dn} - (\mu e^\varepsilon)^{dn} \\
         &= \lambda^{dn}\left(\frac{C}{M(i)}-\left(\frac{\mu e^\varepsilon}{\lambda}\right)^{dn}\right)
     \end{align*}
     By our choice of $\varepsilon$,
     $$\left(\frac{\mu e^\varepsilon}{\lambda}\right)^{dn}\to0$$
     as $n\to\infty$, and we conclude that indeed
     $$\tilde{p}_{dn,i,j}\sim\lambda^{dn}.$$

    At this stage, we pause the proof once again to collect an intermediate result.

    \begin{theorem}\label{thm-bdtooneentropyislimofperiodic}
    Let $(\Omega,f)$ be a topologically transitive FP system, and let an irreducible SFT $X$ and factor map $\pi:X\to\Omega$ be those arising from a Markov partition. Let $$\tilde{P}_n(f,\pi):=\{w\in\Omega:f^n(w)=w \text{ and }|\pi^{-1}w|=1\},$$ and let $d$ be the global period of $X$. Then the following quantities are equal. \begin{enumerate}[(1)]
        \item $h(f)$.
        \item $\lim_{n\to\infty}\frac{1}{dn}\log(|\tilde{P}_{dn}(f,\pi)|).$
        \item $\lim_{n\to\infty}\frac{1}{dn} \log (p_{dn}(f))$.
        \item $\limsup_{n\to\infty} \frac{1}{n}\log(p_n(f))$.
    \end{enumerate} 
\end{theorem}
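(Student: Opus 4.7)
The plan is to read the four equalities off from estimates already established in the proof of Theorem \ref{thm-LPSofFP-intro}, once the transitive hypothesis is used to collapse its layer-and-component decomposition. Because $(\Omega,f)$ is topologically transitive, Proposition \ref{prop-MPconsequences}(\ref{prop-MPconstrans}) forces $X$ to be irreducible, so in the notation of that earlier proof the only index pair that contributes is $(i,j)=(1,1)$, with $X_1^1=X$, $\Omega_1^1=\Omega$, $\pi_1^1=\pi$, and global period equal to the $d$ in the statement. In particular $\tilde{P}_n(f,\pi)=\tilde{P}_n(f,\pi_1^1)$. I would then let $\lambda$ satisfy $h(X)=\log\lambda$ and note that Lemma \ref{lem-bdtoonepreservesentropy} yields $h(f)=h(X)=\log\lambda$, which identifies quantity $(1)$ with $\log\lambda$.

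For $(2)$, I would invoke directly the asymptotic $\tilde{p}_{dn,1,1}\sim\lambda^{dn}$ (or, in the degenerate case where $X$ is a single periodic orbit, the trivial count $\tilde{p}_{dn,1,1}=d$ together with $\lambda=1$) derived inside the proof of Theorem \ref{thm-LPSofFP-intro}. The only preliminary check is that $(1,1)\in\mathcal{G}$, i.e.\ that some $w\in\Omega$ has a unique preimage under $\pi$, which is immediate from Proposition \ref{prop-MPconsequences}(\ref{prop-MPconsunique}). Unravelling notation, this yields $(2)=\log\lambda$.

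For $(3)$, I would sandwich $p_{dn}(f)$: from $\tilde{P}_{dn}(f,\pi)\subseteq P_{dn}(f)$ one gets $\liminf_{n\to\infty}\frac{1}{dn}\log p_{dn}(f)\geq\log\lambda$, while Lemma \ref{lem-lowerboundentropyperiodic} applied to the expansive homeomorphism $f$ gives $\limsup_{n\to\infty}\frac{1}{dn}\log p_{dn}(f)\leq h(f)=\log\lambda$. Hence the limit in $(3)$ exists and equals $\log\lambda$. For $(4)$, the same expansivity lemma bounds $\limsup_{n\to\infty}\frac{1}{n}\log p_n(f)\leq h(f)=\log\lambda$, and restricting the limsup to the subsequence $n=dk$ together with $(3)$ supplies the matching lower bound.

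The content here is really just bookkeeping; the only nontrivial input is the asymptotic $\tilde{p}_{dn}\sim\lambda^{dn}$, which was already the analytic heart of Theorem \ref{thm-LPSofFP-intro}. The main obstacle, such as it is, is verifying that in the transitive case every object in the present statement matches an object from that earlier proof at the first layer, so that the estimate can be quoted verbatim; once this alignment is confirmed, Lemma \ref{lem-lowerboundentropyperiodic} and the obvious containment $\tilde{P}_{dn}(f,\pi)\subseteq P_{dn}(f)$ close all remaining inequalities mechanically.
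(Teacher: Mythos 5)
Your proposal is correct and follows essentially the same route as the paper: establish $(1)=(2)$ by specializing the asymptotic $\tilde{p}_{dn}\sim\lambda^{dn}$ from the proof of Theorem \ref{thm-LPSofFP-intro} to the single layer $(i,j)=(1,1)$ (using Proposition \ref{prop-MPconsequences}(\ref{prop-MPconsunique}) and (\ref{prop-MPconstrans})), then close the remaining equalities with the containment $\tilde{P}_{dn}(f,\pi)\subseteq P_{dn}(f)$ and Lemma \ref{lem-lowerboundentropyperiodic}. The only cosmetic difference is that the paper writes the bookkeeping as one chain of inequalities whereas you present it as two separate sandwiches; your explicit mention of the degenerate single-orbit case is a harmless extra.
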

\begin{proof} By Proposition \ref{prop-MPconsequences} (\ref{prop-MPconsunique}), there is at least one point in $\Omega$ with a unique preimage under $\pi$. Hence, $\Omega_2$ is a strict subsystem of $\Omega$. Since $X$ is irreducible by Proposition \ref{prop-MPconsequences} (\ref{prop-MPconstrans}), following the above argument gives that \textit{(1)}=\textit{(2)}, which we use to obtain the remaining equalities. Note that $$P_n(f)\supset\tilde{P}_{n}(f,\pi),$$
and hence $$p_n(f)=|P_n(f)|\geq |\tilde{P}_n(f,\pi)|.$$
A direct chain of inequalities completes the proof:
    \begin{align*}
        h(f)&\stackrel{\ref{lem-lowerboundentropyperiodic}}{\geq} \limsup_{n\to\infty} \frac{1}{n} \log(p_n(f)) \\
        &\geq \limsup_{n\to\infty} \frac{1}{dn}\log(p_{dn}(f)) \\
        &\geq \liminf_{n\to\infty} \frac{1}{dn}\log(p_{dn}(f)) \\
        &\geq \liminf_{n\to\infty} \frac{1}{dn}\log(|\tilde{P}_{dn}(f,\pi)|) \\
        &=\limn\frac{1}{dn}\log(|\tilde{P}_{dn}(f,\pi)|)=h(f) 
    \end{align*}
\end{proof}

\begin{remark}
    In the statement above, the value of $d$ does not depend on the choice of Markov partition, but is intrinsic to the FP system. Proposition 5.1 of \cite{MarcusEtAlEmbeddingTheorems} shows in particular that the global periods of all irreducible SFT covers of a sofic shift for which there exist a point with a unique preimage must be equal. While their result is stated for sofic shifts, the proof carries over to the FP case directly. Combining this with Proposition \ref{prop-MPconsequences} (\ref{prop-MPconsunique}) shows that the value of $d$ is independent of the choice of Markov partition.
\end{remark}

    Let us continue with the proof of Theorem \ref{thm-LPSofFP-intro}. Recall that $$\tilde{p}_{n,i,j}:=\left|\tilde{P}_n\left(f,\pi_i^j\right)\right| \quad \text{ and } \quad\tilde{q}_{n,i,j} := \left|\tilde{Q}_n\left(f,\pi_i^j\right)\right|,$$ and our aim is to show that $$\supp\left(\tilde{q}_{n,i,j}\right)=dS,$$
    where $S$ is cofinite and $d$ is the global period of $X_i^j$. 
    
    We have shown that
    $$\lim_{n\to\infty} \frac{1}{dn} \log\left(\tilde{p}_{dn,i,j}\right)=\log(\lambda),$$
    where $\lambda$ is such that  $h(X_i^j)=\log(\lambda)$ and $d$ is the global period of $X_i^j$. We apply this fact in conjunction with a technique from the proof of Corollary 4.3.8 in \cite{Lind_Marcus_1995}. 
    
    First observe that from Equation (\ref{Q_n<P_n}), we get that \begin{equation}\label{analyticarg}
         \tilde{q}_{n,i,j} \geq \tilde{p}_{n,i,j}-\sum_{k|n,k<n}\tilde{p}_{k,i,j} \geq \tilde{p}_{n,i,j}-\sum_{k<n/2}\tilde{p}_{k,i,j}.
     \end{equation}
     
     Moreover, we note that $\tilde{P}_n(f,\pi_i^j)$ is empty whenever $n$ is not a multiple of $d$, since a periodic point $w$ with a unique preimage has period $n$ if and only if its preimage has period $n$. Hence $\tilde{Q}_n(f,\pi_i^j)$ is empty if $n\notin d\N$. 
     
     Since also $\tilde{p}_{dn,i,j}\sim \lambda^{dn}$, we conclude from equation (\ref{analyticarg}) that $\tilde{Q}_{dn}(f,\pi_i^j)$ is nonempty for all large enough $n$. That is, 
     $$\supp\left(\tilde{q}_{n,i,j}\right)=dS,$$
     where $S$ is cofinite. 

    We have now shown that
    \begin{align*}
        \operatorname{LPS}(f)&=\bigcup_{(i,j)\in\mathcal{G}} \supp\left(\left|\tilde{Q}_n\left(f,\pi_i^j\right)\right|\right) \\
        &= F\cup \bigcup_{l=1}^L d_lS_l
    \end{align*}
    where $F$ is finite and each $S_l$ is cofinite. Since $X=X_1$ is an SFT, it has at least one periodic point, and therefore $\Omega$ has at least one periodic point. Hence, either $F$ or the big union may be empty, but not both. This concludes the general case.
         
Recall from Proposition \ref{prop-MPconsequences} (\ref{prop-MPconstrans}) that if $(\Omega,f)$ is FP and topologically transitive, $X$ can be taken to be irreducible. Such an SFT $X$ must either consist of a single finite orbit, or have infinitely many periodic points. Consequently, the LPS of a topologically transitive FP system must be of the form above, but if it is finite, it must be a singleton.

Moreover, if $(\Omega,f)$ is topologically mixing, Proposition \ref{prop-MPconsequences} (\ref{prop-MPconstrans}) says that $X_1$ is mixing, hence $X_1$ has period 1 by Proposition 4.5.10 in \cite{Lind_Marcus_1995}. In the notation above, this means that $d_l=1$ for some $l$. The union of a cofinite set with any other subset of the natural numbers is still cofinite, so the contribution of $\tilde{q}_{n,i,j}$ for $i>1$, the ``deeper'' layers of the system, to the LPS is effectively hidden. With that, we have proven Theorem \ref{thm-LPSofFP-intro}.

Suppose now that $\Omega$ is an SFT. Then it is straightforward to show that the cylinder sets formed by the alphabet of $\Omega$ are a Markov partition, where $\pi$ is the identity map. That is, $X=\Omega$. We thus only need to consider the irreducible components $X_1^1,...,X_1^{k_1}$, noting again that any periodic point is not transient, so it must lie in one of these irreducible components. By the above analysis, the LPS of each component is either a singleton $\{d\}$ or $dS$ for $S$ cofinite. Thus,
$$LPS(X)=F\cup \bigcup_{l=1}^Ld_lS_l,$$
where $F$ is finite and each $S_l$ cofinite. Again, either $F$ or the big union may be empty, but not both. This classification was first proved in \cite{DoeringPavlov} by a direct combinatorial argument. 

The observations we made with respect to the irreducibility hold in the same way. If $X$ is irreducible, then we only have one component $X_1^1$, and $LPS(X)$ is either a singleton $\{d\}$ or $dS$ for $S$ cofinite. If $X$ is mixing, then $d=1$, and $LPS(X)$ is either $\{1\}$ or a cofinite set $S$. This concludes the proof of Theorem \ref{thm-LPSofirredSFT-intro}.

\subsubsection{Receptive points}

In the introduction, we mentioned that our main results are motivated by Krieger's famed embedding theorem. It is natural to ask for generalizations of this theorem. In \cite{BoyleLowerEntropy}, Boyle proves one direction of Krieger's theorem in the case of mixing sofic shifts, and recent work by Marcus, Meyerovitch, Thomsen, and Wu in \cite{MarcusEtAlEmbeddingTheorems} explores the irreducible case for both sofic shifts and SFTs. In both cases, the main ingredient is the set of \textit{receptive periodic points}. 

Let $Y$ be a sofic shift. A word $v$ is \textit{synchronizing} if for any words $w$, $w'$, such that $wv$ and $vw'$ are both allowed in the shift, then $wvw'$ is also allowed. A periodic point $w^\infty$ in $Y$ with least period $|w|$ is then said to be \textit{receptive} if there exist synchronizing words $v_1,v_2$ such that the word $v_1w^kv_2$ is allowed in $X$ for all $k\in\N$.

Briefly, the intuition behind the utility of receptive periodic points is as follows. Recall that in order to embed a shift space $X$ into a mixing SFT $Y$, Krieger's embedding theorem requires that $h(X)<h(Y)$ and $q_n(X)\leq q_n(Y)$ for all $n\in\N$. Both conditions force our embedding to behave a certain way on the set of all periodic points: The former gives rise to an embedding into $P_n(Y)$ for all but finitely many values of $n$, while the latter condition forces an embedding on these remaining values of $n$. A priori, these two embeddings need not cooperate nicely if $X$ is a mixing sofic shift. Receptive periodic points are the key element that allow us to ``glue together'' the two embeddings that arise from the two conditions, which is precisely what is done in Theorem 3.6 of \cite{BoyleLowerEntropy}. 

For our purposes, it will be convenient to use the following definition of receptive periodic points, which is equivalent to the definition above by Theorem 4.7 in \cite{MarcusEtAlEmbeddingTheorems}. 

\begin{definition}\label{def-receptive}
    Let $Y$ be a sofic shift and $w\in Y$ be periodic. We say $w$ is receptive if there exists an irreducible SFT $X$ and a factor map $\pi:X\to Y$ such that $w$ has a preimage under $\pi$ with the same least period. In particular, periodic points with a unique preimage under some such factor map $\pi$ are receptive.
\end{definition}

We also extract from Proposition 5.1 of the same reference the equivalence of two ways in which one might define the ``period'' of a sofic shift. Recall that the global period of an irreducible SFT $X$ is defined to be $\gcd(\operatorname{PS}(X)),$ which is equal to the period of any graph presentation of $X$.

\begin{proposition}[\cite{MarcusEtAlEmbeddingTheorems}]\label{prop-periodequivalence}
    Let $Y$ be an irreducible sofic shift. Then the global period of any irreducible SFT $X$ for which there exists a factor map $\pi:X\to Y$ which is one-to-one on at least one point in $Y$ is equal to the gcd of the periods of all receptive periodic points.
\end{proposition}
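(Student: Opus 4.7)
The plan is to establish both divisibilities $d_R \mid d_X$ and $d_X \mid d_R$, where $d_X := \gcd(\operatorname{PS}(X))$ is the global period of $X$ and $d_R$ denotes the gcd of periods of all receptive periodic points of $Y$.

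For $d_R \mid d_X$, I would exploit the last sentence of Definition \ref{def-receptive}: any periodic $w \in Y$ whose $\pi$-preimage is a singleton is automatically receptive, and its unique preimage in $X$ has the same least period, so this period lies in $\operatorname{PS}(X)$. The hypothesis that $\pi$ is one-to-one on at least one point, for an irreducible SFT cover of an irreducible sofic shift, upgrades to the existence of a \emph{magic word} for $\pi$ in $Y$. Combined with Proposition 4.5.10 of \cite{Lind_Marcus_1995}, which describes $\operatorname{PS}(X)$ as cofinite in $d_X \N$, this lets me build, for each sufficiently large $n \in d_X \N$, a periodic point of $X$ of least period $n$ whose orbit contains the magic word, and which therefore projects to a unique-preimage — hence receptive — periodic point of $Y$ of the same period $n$. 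Consequently, a cofinite subset of $d_X \N$ consists of receptive periods; its gcd equals $d_X$ and is divisible by $d_R$, so $d_R \mid d_X$.

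For $d_X \mid d_R$, I need that each receptive period $n$ lies in $\operatorname{PS}(X)$. Given $w \in Y$ receptive of least period $n$, Definition \ref{def-receptive} supplies another irreducible SFT $X'$ with factor map $\pi' \colon X' \to Y$ and a period-$n$ preimage $x'$ of $w^\infty$. The plan is to form the fiber-product SFT $Z := \{(a, a') \in X \times X' : \pi(a) = \pi'(a')\}$ with its two projection factor maps to $X$ and $X'$. To produce a period-$n$ — and not merely period-$mn$ — lift of $w^\infty$ in $X$, I would combine the synchronizing-words characterization of receptivity (words $v_1, v_2$ with $v_1 w^k v_2$ allowed in $Y$ for every $k$) with the magic word for $\pi$, extending $v_1$ on the left to incorporate a magic word using the irreducibility of $Y$. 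This pins the lift of $v_1 w^k v_2$ uniquely in $X$; pigeonholing on the cycle that successive reads of $w$ induce in the presenting graph of $X$ from the state forced by the magic word yields a periodic lift of $w^\infty$ of length exactly $n$. Hence $n \in \operatorname{PS}(X)$, so $d_X \mid n$ for each receptive period $n$, giving $d_X \mid d_R$.

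The principal obstacle is the final step — forcing the cycle length of the lift in $X$ to be exactly $n$ rather than a proper multiple $mn$. A naive projection from the fiber product only produces a lift of period $\operatorname{lcm}(m, n) = m$, where $m$ is the number of copies of $w$ needed for the graph-cycle in $X$ reading $w^k$ to close up, and a priori $m > 1$. The crux is cross-using the two equivalent characterizations of receptivity from Theorem 4.7 of \cite{MarcusEtAlEmbeddingTheorems} with the magic-word property of $X$ — available only because $\pi$ has at least one unique-preimage point and $Y$ is irreducible — to force $m = 1$.
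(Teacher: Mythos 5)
This proposition is not proved in the paper at all: it is imported verbatim from Proposition 5.1 of \cite{MarcusEtAlEmbeddingTheorems}, so there is no in-paper argument to compare yours against. Judged on its own terms, your proposal has a genuine gap, and it sits exactly where you yourself locate the ``crux.'' For the inclusion $d_X \mid d_R$ you must produce, for a receptive point $w^\infty$ of least period $n$, a point of $X$ fixed by $\sigma^n$ (a period-$mn$ lift only yields $d_X \mid mn$, which is useless). Your fiber product with the auxiliary cover $X'$, and your pigeonhole on the states reached after successive copies of $w$, both deliver only a cycle of length a multiple of $n$; the step that would force that multiple to be $1$ is described as a hope (``cross-using the two characterizations \dots to force $m=1$'') rather than argued. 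Transferring the period-preserving lift from the unknown cover $X'$ supplied by Definition \ref{def-receptive} to the \emph{given} cover $X$ is precisely the content of the cited Proposition 5.1, so the proposal as written assumes what it must prove.

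There is a second, quieter gap: both halves of your argument invoke a magic word for $\pi$, but magic-word/degree theory is a theory of \emph{finite-to-one} codes, and the hypothesis that $\pi$ is one-to-one at a single point does not make $\pi$ finite-to-one. For instance, the $1$-block map from the full $3$-shift onto the full $2$-shift collapsing two symbols is one-to-one exactly at $0^\infty$ yet is infinite-to-one, so under the stated hypotheses you are not entitled to a magic word without further argument (or without first reducing to a finite-to-one situation). This also undermines the first direction as written: the claim that a periodic point of $Y$ whose orbit sees the magic word has a unique preimage is a degree-$1$ statement for finite-to-one codes. Note that your overall skeleton --- $d_R \mid d_X$ via unique-preimage (hence receptive) periodic points of cofinitely many periods in $d_X\N$, and $d_X \mid d_R$ via period-preserving lifts of receptive points --- is the natural one and is consistent with how the paper \emph{uses} the proposition (there the cover comes from a Markov partition and is bounded-to-one, with a unique-preimage point guaranteed by Proposition \ref{prop-MPconsequences}); but to count as a proof you would need to justify the finite-to-one/magic-word input and, above all, supply the synchronization argument that forces the lift in $X$ of a receptive point to close up after exactly one period.
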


This leads us to the following. Suppose $Y$ is an irreducible sofic shift that is not a finite orbit. $Y$ is a topologically transitive FP system, and hence admits a Markov partition. Let $X$ be the SFT cover arising from this Markov partition for $Y$ and denote the corresponding factor map $\pi:X\to Y$. Denote also by $\tilde{Q}_n(Y)$ the set of points of least period $n$ which have a unique preimage under $\pi$. 

By Proposition \ref{prop-MPconsequences} (\ref{prop-MPconsunique}) there is at least one such point, so we know from the proof of Theorem \ref{thm-LPSofFP-intro} that 
$$\supp(\{|\tilde{Q}_n(Y)|\})=dS',$$
where $S'$ is some cofinite set, and $d$ is the global period of $X$. By Definition \ref{def-receptive}, any point with a unique preimage in this particular cover of $Y$ is receptive, and thus
$$\operatorname{LPS}(\{y\in Y:y \text{ is periodic and receptive}\})\supset dS'.$$
However, by Proposition \ref{prop-periodequivalence}, we also know that
$$\operatorname{LPS}(\{y\in Y:y \text{ is periodic and receptive}\})\subset d\N.$$

Of course, in the case that $Y$ is a finite orbit, all of its points are receptive and of the same period. Combining all of the above, we immediately get the following result.

\begin{theorem}\label{thm-LPSofreceptive}
    Let $Y$ be an irreducible sofic shift. Then $$\operatorname{LPS}(\{y\in Y:y \text{ is periodic and receptive}\})$$ is either a singleton $\{d\}$ or $dS$, 
    where $S$ is a cofinite set.
\end{theorem}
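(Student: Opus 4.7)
The plan is to combine the intermediate results developed in the preceding subsection with the material built up in the proof of Theorem \ref{thm-LPSofFP-intro}, essentially formalizing the discussion that directly precedes the statement. The strategy naturally splits into a trivial case and a main case.

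First I would dispose of the degenerate situation: if $Y$ is a single finite orbit, then $Y$ is itself an irreducible SFT, every point of $Y$ is periodic with a unique preimage under the identity map, and hence every point is receptive in the sense of Definition \ref{def-receptive}. The LPS is then the singleton $\{d\}$, where $d$ is the common least period.

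The main case is when $Y$ is an irreducible sofic shift that is not a single periodic orbit. Since $Y$ is in particular a topologically transitive FP system, Theorem \ref{thm-FPiffMP} gives a Markov partition, and I would fix the associated SFT cover $X$ together with the factor map $\pi: X \to Y$ arising from that partition. By Proposition \ref{prop-MPconsequences} (\ref{prop-MPconstrans}), $X$ can be taken to be irreducible, and by Proposition \ref{prop-MPconsequences} (\ref{prop-MPconsunique}), there exists at least one point in $Y$ with a unique $\pi$-preimage. I would then invoke the computation carried out in the proof of Theorem \ref{thm-LPSofFP-intro} in the layer $(i,j)=(1,1)$: under precisely these hypotheses, the support of $n \mapsto |\tilde{Q}_n(Y, \pi)|$ equals $dS'$ for some cofinite $S' \subset \N$, where $d$ is the global period of $X$.

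The finish combines two containments. For the lower containment, any periodic point of $Y$ with a unique $\pi$-preimage is receptive by Definition \ref{def-receptive}, so the least-period set of the receptive periodic points contains $dS'$. For the upper containment, Proposition \ref{prop-periodequivalence} identifies $d$ with the gcd of the periods of all receptive periodic points; in particular, every receptive periodic point has least period in $d\N$. Thus
\[
dS' \;\subset\; \operatorname{LPS}(\{y\in Y : y \text{ periodic and receptive}\}) \;\subset\; d\N.
\]
Since $dS'$ is a cofinite subset of $d\N$, the set in the middle must itself be of the form $dS$ with $S \subset \N$ cofinite, completing the classification. I do not anticipate a serious obstacle here, as the analytic heavy lifting (the asymptotic $\tilde{p}_{dn,1,1} \sim \lambda^{dn}$ and the Möbius-style extraction of least periods) has already been done inside the proof of Theorem \ref{thm-LPSofFP-intro}; the only real care needed is to verify that the receptive-points viewpoint lines up exactly with the ``unique preimage'' viewpoint, which is exactly what Definition \ref{def-receptive} and Proposition \ref{prop-periodequivalence} are designed to provide.
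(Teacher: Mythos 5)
Your proposal is correct and follows essentially the same approach as the paper: the finite-orbit degenerate case, the invocation of the $(i,j)=(1,1)$ layer estimate from the proof of Theorem \ref{thm-LPSofFP-intro}, and the sandwich $dS' \subset \operatorname{LPS}(\text{receptive}) \subset d\N$ via Definition \ref{def-receptive} and Proposition \ref{prop-periodequivalence}. The only (welcome) difference is that you spell out the final logical step that a set sandwiched between a cofinite subset of $d\N$ and $d\N$ itself must be of the form $dS$ with $S$ cofinite, which the paper leaves implicit.
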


We point out that Definition \ref{def-receptive} is not symbolic, and hence a priori can be used to define receptive points in an FP system. In fact, although \cite{MarcusEtAlEmbeddingTheorems} focuses on sofic systems, it seems many of their results carry over directly to the FP case. We thus conjecture that Theorem \ref{thm-LPSofreceptive} also holds for FP systems.

\subsection{Gap shifts}\label{sec-LPSofgap}

We shall obtain a more precise characterization of the LPS of a subclass of shift spaces called \textit{gap shifts}. We denote $\N_0:=\N\cup\{0\}$. 

\begin{definition}
    Let $S\subset \mathbb{N}_{0}$. Then the $S$-gap shift, denoted $X(S)$, is the shift on $\{ 0,1 \}$ such that the word $10^m1$ is only allowed if $m\in S$.
\end{definition}

Note that if $S$ is finite, the symbol 1 occurs infinitely many times to the right and to the left in any point $x\in X(S)$. Moreover, if $S$ is infinite, infinitely many consecutive 0's are allowed, and hence the point $0^\infty\in X(S)$. 

The intersection of $S$-gap shifts and sofic shifts (respectively SFTs) is easily described, due to the following result due to Dastjerdi and Jangjoo. We say a set $S\subset \N_0$ is \textit{eventually periodic} if, up to some finite number of elements, it consists only of tails of infinite arithmetic progressions. 

\begin{proposition}[\cite{DASTJERDI20122654}]\label{prop-classifySgaps}
    An $S$-gap shift is sofic if and only if $S$ is eventually periodic. Moreover, an $S$-gap shift is an SFT if and only if $S$ is finite or cofinite.
\end{proposition}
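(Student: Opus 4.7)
The plan is to handle the SFT and sofic claims separately. The SFT claim reduces to a direct combinatorial analysis of possible forbidden words, while the sofic claim rests on the follower-set characterization of sofic shifts (Theorem 3.2.10 in \cite{Lind_Marcus_1995}), which says that a shift is sofic iff it has only finitely many distinct follower sets.

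For the SFT part, the forward direction is explicit: if $S$ is cofinite then $\{10^m 1 : m \notin S\}$ is already a finite defining forbidden list; and if $S$ is finite with $N = \max S$, then $X(S)$ is defined by the finite list $\{10^m 1 : m \in \{0,\dots,N\} \setminus S\} \cup \{10^{N+1}\}$, where the final word cuts off any gap longer than $N$. For the converse, suppose $X(S)$ is defined by a finite forbidden list $F$ in which each word has length at most $L$. I would case-split on which subwords of a long gap word $10^m 1$ can lie in $F$: if any word of the form $10^a$, $0^a 1$, or $0^a$ with $a \leq L$ lies in $F$, then every sufficiently long $10^m 1$ contains a forbidden subword and so $\max S < a$, forcing $S$ to be finite. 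Otherwise every element of $F$ has the form $10^a 1$ with $a \leq L-2$, in which case $\N_0 \setminus S$ is contained in the finite set $\{a : 10^a 1 \in F\}$, so $S$ is cofinite.

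For the sofic part, the key observation is that the follower set in $X(S)$ of any word ending in $10^k$ is determined by the translated set
\[
T_k := \{ t \in \N_0 : t + k \in S \},
\]
because a continuation of the form $0^t 1 \ldots$ is admissible precisely when $k + t \in S$, with the additional permanent option $0^\infty$ when $S$ is infinite. Words of the form $0^k$ with no preceding $1$ contribute at most one further follower set, so $X(S)$ is sofic if and only if $\{T_k : k \geq 0\}$ is finite.

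The crucial observation driving the sofic converse is that $(T_k)$ evolves deterministically under a left-shift:
\[
T_{k+1} = \{ t \in \N_0 : t + 1 \in T_k \}.
\]
Therefore, if $T_k = T_{k'}$ for some $k < k'$, iterating this relation gives $T_{k+j} = T_{k'+j}$ for every $j \geq 0$, so $(T_k)$ is eventually periodic with period $p := k' - k$. Reading off the equivalence $k \in S \iff 0 \in T_k$ immediately upgrades this to eventual periodicity of $S$ with period $p$. Conversely, eventual periodicity of $S$ with period $p$ past some $N_0$ makes $T_k$ depend only on $k \bmod p$ for $k \geq N_0$, yielding finitely many follower sets; an explicit presentation is the labeled graph on states $\{0, 1, \ldots, N_0 + p - 1\}$ with a $0$-labeled edge from $k$ to $k+1$ (closing into a cycle of length $p$ once $k \geq N_0$) and a $1$-labeled edge back to state $0$ precisely when $k \in S$. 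The main obstacle is the identification of follower sets with the $T_k$ and the recognition that the deterministic shift relation above upgrades mere recurrence of the $T_k$ to genuine periodicity of $S$; once that is set up, both directions of the sofic claim fall out.
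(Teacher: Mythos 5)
Since the paper simply cites this proposition from Dastjerdi--Jangjoo and gives no proof of its own, your argument stands on its own; your overall route (a direct forbidden-word analysis for the SFT claim, and the follower-set characterization together with the shift relation $T_{k+1}=\{t:t+1\in T_k\}$ for the sofic claim) is the standard one, and the sofic half is essentially correct: identifying follower sets of words ending in $10^k$ with $T_k$ and using the deterministic shift relation to upgrade a repetition $T_k=T_{k'}$ to eventual periodicity of $S$ is exactly the right device. Two caveats there: the correspondence with follower sets only runs over $k$ with $10^k$ in the language, so you should dispose of finite $S$ separately (it is trivially eventually periodic); and your explicit labeled graph over-generates when $S$ is finite, since bi-infinite paths trapped in the $0$-cycle produce points with infinite runs of zeros that are not in $X(S)$ --- either prune the graph or route finite $S$ through the SFT half. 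Neither caveat damages the main follower-set count.

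The SFT half has two genuine (though reparable) flaws. First, for finite $S$ with $N=\max S$, the list $\{10^m1:m\in\{0,\dots,N\}\setminus S\}\cup\{10^{N+1}\}$ does \emph{not} define $X(S)$: the point $0^\infty$, and any point with a left-infinite run of zeros followed by $1$ and then $S$-gaps, contains none of these words, yet no such point lies in $X(S)$ when $S$ is finite (as the paper notes, every point then has infinitely many $1$s in both directions). The fix is to forbid $0^{N+1}$ rather than $10^{N+1}$. Second, in the converse, the claim ``otherwise every element of $F$ has the form $10^a1$'' is unjustified: $F$ may contain words with an interior $1$ (such as $0^a10^b$) or with several $1$s (such as $10^a10^b1$), which are legitimately absent from the language. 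The correct statement of your case split is about subwords of $10^m1$: its proper subwords are exactly of the forms $0^a$, $10^a$, $0^a1$. If some such word lies in $F$, then (since an allowed word $10^s1$ with $s\ge a$ would contain it) $\max S<a$ and $S$ is finite; if none does, then for $m\notin S$ with $m+2>L$ the word $10^m1$ would contain no member of $F$, contradicting its exclusion, so $\N_0\setminus S\subseteq\{0,\dots,L-2\}$ and $S$ is cofinite. With those repairs the proof is complete.
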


A subset $R$ of the natural numbers is \textit{almost sum-closed} if for any pair of \textit{distinct} elements of $R$, their sum is also in $R$. Given a set $S$, we denote its \textit{almost sum-closure} by 
$$
\Sigma_{S}^*:=\left\{  \sum_{i=1}^kn_{i}:k\in\N,n_{i}\in S \text{ with } k=1 \text{ or } \exists i,j\in\{1,...,k\} \text{ s.t. }n_{i}\neq n_{j}  \right\}.
$$
The following properties of almost sum-closures are immediate from the definition and Bézout's Lemma, Theorem 2.10 in \cite{judsonAbstractAlgebraTheory2024}.
\begin{proposition}\label{prop-almostsumprops} 
    If $S\subset \N$ is a singleton, then $$\Sigma_S^*=S.$$ 
    More generally, if $S\subset\N$ is almost sum-closed, then
    $$\Sigma_{S}^*=S.$$ 
    Finally, if $S$ contains at least two distinct elements, then  $$
\Sigma_{S}^*=\gcd(S)\cdot S',
$$
where $S'$ is cofinite. 
\end{proposition}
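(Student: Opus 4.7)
The plan is to treat the three clauses in order using direct combinatorics for the first two and a B\'ezout / Frobenius-type argument for the third. For the singleton $S=\{s\}$, any sum with $k\geq 2$ summands from $S$ has every summand equal to $s$, violating the distinctness clause in the definition of $\Sigma_{S}^{*}$; hence only $k=1$ contributes and $\Sigma_{S}^{*}=S$.

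For the almost sum-closed case, $S\subset\Sigma_{S}^{*}$ is immediate from the $k=1$ clause, so I would prove the reverse inclusion by strong induction on the number of summands $k$. The cases $k=1$ and $k=2$ are trivial and the defining property, respectively. For $k\geq 3$, given $n_{1},\dots,n_{k}\in S$ not all equal, pick $i\neq j$ with $n_{i}\neq n_{j}$ and replace the pair $(n_{i},n_{j})$ by $n_{i}+n_{j}\in S$, obtaining $k-1$ elements of $S$ with the same total. If these are not all equal, the inductive hypothesis concludes the argument. The single edge case arises when every remaining $n_{l}$ equals $n_{i}+n_{j}$; here I would instead re-pair $n_{i}$ with some such $n_{l}$, producing a new $(k-1)$-multiset containing the distinct values $2n_{i}+n_{j}$ and $n_{j}$, which is never constant because $n_{i}\geq 1$, so the inductive hypothesis again applies.

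For the third assertion, set $d=\gcd(S)$. The inclusion $\Sigma_{S}^{*}\subset d\N$ is clear since every summand is divisible by $d$. To obtain cofiniteness in $d\N$, I would extract a finite subset $S_{0}=\{s_{1},\dots,s_{m}\}\subset S$ with $\gcd(S_{0})=d$, which exists because the gcds of nested finite subsets of $S$ form a strictly decreasing sequence of positive integers bounded below by $d$, hence stabilize at $d$ in finitely many steps. The Sylvester--Frobenius theorem for numerical semigroups then provides $N_{0}$ such that every $n\in d\N$ with $n\geq N_{0}$ admits a non-negative integer representation $n=\sum_{i}a_{i}s_{i}$. If at least two of the $a_{i}$ are positive, this already witnesses $n\in\Sigma_{S}^{*}$. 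Otherwise $n=a_{j}s_{j}$ for some $j$, and rewriting $a_{j}s_{j}=(a_{j}-s_{l})s_{j}+s_{j}\cdot s_{l}$ for any $l\neq j$ uses both $s_{j}$ and $s_{l}$; this is valid once $a_{j}\geq s_{l}+1$, which holds uniformly as soon as $n\geq\max_{j\neq l}(s_{l}+1)s_{j}$. Combining these two thresholds gives the desired cofinite tail in $d\N$.

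I expect the main obstacle to be the edge case in the second assertion's induction: one must check that the re-paired multiset is actually non-constant in every subcase, and in particular the base $k=3$ needs a brief separate argument (the reduced multiset is $\{2n_{i}+n_{j},n_{j}\}$, closed off by one further application of almost sum-closure). The remaining steps are routine applications of B\'ezout's lemma and the standard numerical-semigroup fact cited above.
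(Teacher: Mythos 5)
Your argument is correct, and it is worth noting that the paper does not actually spell out a proof: Proposition \ref{prop-almostsumprops} is asserted to be immediate from the definition and B\'ezout's Lemma, so your write-up supplies the missing details along exactly the intended lines (a direct check of the definition for the first two clauses, gcd/Frobenius reasoning for the third). The induction for the almost-sum-closed case, including the re-pairing trick when the reduced multiset becomes constant, is sound: $n_i\neq n_i+n_j$ because $n_j\geq 1$, so $n_i+(n_i+n_j)=2n_i+n_j\in S$, and the re-paired $(k-1)$-multiset contains the distinct values $2n_i+n_j$ and $n_j$, so the inductive hypothesis applies. One small patch is needed in the third clause: your extraction of a finite $S_0=\{s_1,\dots,s_m\}\subset S$ with $\gcd(S_0)=d$ can terminate with $S_0$ a singleton (for instance whenever $d\in S$), and then the rewriting $a_js_j=(a_j-s_l)s_j+s_j\cdot s_l$ has no index $l\neq j$ available, so every large multiple of $d$ would only admit the forbidden constant representation. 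Since $S$ contains two distinct elements by hypothesis, simply seed $S_0$ with two of them before running your gcd-reduction (enlarging $S_0$ cannot raise the gcd above $d$); with $m\geq 2$ guaranteed, the Frobenius step and your explicit thresholds $n\geq N_0$ and $n\geq\max_{j\neq l}(s_l+1)s_j$ complete the cofiniteness claim as you describe.
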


Recall that a word $w$ is \textit{not purely periodic} if there is no other word $v$ and integer $k>1$ such that $w=v^k$. In other words, if $w$ is not purely periodic and $w^\infty$ is allowed in a shift, then the least period of $w^\infty$ is $|w|$. Our classification of the LPS of gap shifts is then as follows.

\begin{theorem}\label{thm-LPSofgap}
    Let $S\subset\N_0$ be nonempty and let $X(S)$ be the $S$-gap shift. Then $$\operatorname{LPS}(X(S))=\Sigma_{S+1}^*$$
    if $S$ is finite, and $$\operatorname{LPS}(X(S))=\Sigma_{S+1}^*\cup \{1\}$$ if $S$ is infinite. In particular, $\operatorname{LPS}(X(S))$ is either \begin{itemize}
        \item A singleton $\{ d \}$, or
        \item An almost sum-closed set $dR$ with $R$ cofinite, or
        \item An almost sum-closed set $dR$ with $R$ cofinite, together with the singleton $\{ 1 \}$.
    \end{itemize}
\end{theorem}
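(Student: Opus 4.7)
The plan is to directly analyze the periodic points of $X(S)$ and then match their set of least periods to the almost sum-closure $\Sigma_{S+1}^*$. First observe that every periodic point of $X(S)$ is one of three types: the fixed point $0^\infty$ (which lies in $X(S)$ iff $S$ is infinite), the fixed point $1^\infty$ (which lies in $X(S)$ iff $0 \in S$), or a point of the form $(10^{s_1}10^{s_2}\cdots 10^{s_k})^\infty$ with $k \geq 1$ and each $s_i \in S$. In the last case the (not necessarily least) period equals $n = \sum_{i=1}^k(s_i+1)$, and the least period equals $n$ exactly when the cyclic word $10^{s_1}\cdots 10^{s_k}$ is not purely periodic; because the positions of the $1$'s determine the block decomposition, this is equivalent to the cyclic gap sequence $(s_1,\ldots,s_k)$ not being purely periodic.

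With this structural description, one inclusion is essentially automatic: if $n \geq 2$ lies in $\operatorname{LPS}(X(S))$, it arises from some non-purely-periodic cyclic gap sequence, and either $k = 1$ (giving $n = s_1 + 1 \in S+1 \subseteq \Sigma_{S+1}^*$) or $k \geq 2$, in which case non-pure-periodicity prevents all $s_i$ from being equal, placing $n$ in $\Sigma_{S+1}^*$ by definition. The converse inclusion is the meat of the argument: given $n = \sum_{i=1}^k(s_i+1) \in \Sigma_{S+1}^*$ with $k \geq 2$ and not all $s_i$ equal, one must produce a non-purely-periodic cyclic arrangement of the multiset $\{s_1,\ldots,s_k\}$. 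I expect this combinatorial claim to be the main obstacle. My approach is to use the ``sorted'' arrangement $v_1^{m_1} v_2^{m_2} \cdots v_r^{m_r}$, where $v_1 < v_2 < \cdots < v_r$ are the distinct gap values with multiplicities $m_i$ and $r \geq 2$: in this arrangement the set of positions occupied by $v_1$ is a proper cyclic interval of length $m_1 < k$, and since the only shifts that preserve such an interval are multiples of $k$, no proper period $d \mid k$ with $d < k$ can exist. This yields a realizing periodic point with least period exactly $n$.

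The singleton $\{1\}$ is then handled separately: the fixed point $1^\infty$ contributes $1 \in \operatorname{LPS}(X(S))$ iff $0 \in S$, and that value is already absorbed into $\Sigma_{S+1}^*$ as $0 + 1$; the fixed point $0^\infty$ contributes $1$ iff $S$ is infinite, explaining why the disjunct $\cup\,\{1\}$ appears precisely in the infinite case. The ``in particular'' trichotomy then follows immediately by applying Proposition \ref{prop-almostsumprops} to $\Sigma_{S+1}^*$: when $|S| = 1$ the almost sum-closure is a singleton, when $|S| \geq 2$ it equals $dR$ for $d = \gcd(S+1)$ and some cofinite $R$, and the third case arises exactly when $S$ is infinite and $0 \notin S$ so that $1$ must be adjoined as a new element.
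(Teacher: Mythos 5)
Your proposal matches the paper's argument in all essentials: both classify the periodic points of $X(S)$ as the fixed points $0^\infty$, $1^\infty$, and cyclic concatenations $(10^{s_1}\cdots 10^{s_k})^\infty$, both identify the set of least periods from such points with $\Sigma^*_{S+1}$, and both realize a given element of $\Sigma^*_{S+1}$ by sorting the gaps into (weakly) increasing order. The one place where you add value is that the paper asserts without comment that the sorted word $10^{n_1}\cdots 10^{n_k}$ (with not all $n_j$ equal) is not purely periodic, whereas you supply a short justification via the observation that the positions occupied by the minimal gap value form a proper cyclic interval, which cannot be invariant under any nontrivial cyclic shift. That observation is correct and closes a small gap left implicit in the paper.
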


\begin{proof}
    We will consider three cases.
    First, if $S=\{d\}$, then
    $$X(S)=\{\sigma^i((10^d)^\infty):i\in\Z\},$$
    which is a finite orbit of length $d+1$. Hence,
    $$\operatorname{LPS}(X(S)) = \{d+1\}=\Sigma_{\{d+1\}}^*.$$

    Suppose now that $S$ is finite with $|S|>1$, and let $x\in X(S)$ be periodic. Since $S$ is finite, $x\neq 0^\infty$. Then we either have
    $$x=\sigma^i((10^n)^\infty)$$ for some $i\in\Z$ $n\in S$, or
    $$x=\sigma^i((10^{n_{1}}10^{n_{2}}1\cdots 10^{n_{k}})^\infty),$$
    where $i\in\Z$, $k>1$, $n_j\in S$ for all $1\leq j\leq k$, and $w:=10^{n_{1}}10^{n_{2}}1\cdots 10^{n_{k}}$ is not purely periodic. 
    In the first case, the least period of $x$ is $$|10^n|=n+1\in \Sigma^*_{S+1}.$$
    In the second case, at least two $n_j,n_{j'}$ are distinct, so the least period of $x$ is $$|10^{n_1}1\cdots10^{n_k}|=\sum_{j=1}^k(n_j+1)\in \Sigma^*_{S+1}.$$
    Thus, $\operatorname{LPS}(X(S))\subset \Sigma^*_{S+1}.$
    Conversely, let $\tilde{n}\in \Sigma_{S+1}^*$. Then there exists a set $\{n_j\}_{j=1}^k$ such that $n_j\in S$ for all $1\leq j\leq k$ and at least two $n_j$,$n_{j'}$ are distinct, and $$\sum_{j=1}^k(n_j+1)=\tilde{n}.$$ Then, after sorting the $n_j$ in weakly increasing order, the word $$10^{n_1}1\cdots10^{n_k}$$ is not purely periodic, so
    $$x=(10^{n_1}1\cdots10^{n_k})^\infty$$ is in $X(S)$ and has least period $\tilde{n}$. Using Proposition \ref{prop-almostsumprops}, we conclude that $$\operatorname{LPS}(X(S))=\Sigma_{S+1}^*=\gcd(S+1)\cdot R$$ for some cofinite set $R$.

    Finally, suppose $S$ is infinite. We may then repeat exactly the argument above, but must also note that $0^\infty\in X(S)$. This yields a point of period 1. In this case, we thus get $$\operatorname{LPS}(X(S))=\Sigma_{S+1}^*\cup\{1\}=\left(\gcd(S+1)\cdot R\right)\cup\{1\},$$ where again $R$ is cofinite.

\end{proof}

In the next section, we will show that any set of the form in Theorem \ref{thm-LPSofgap} can in fact be realized by a sofic gap shift, or even a gap shift of finite type.

\section{Realization}\label{realizations}

This section is dedicated to explicit constructions that prove the sufficiency in our main results. In fact, each construction will be a shift space. Before we begin with the LPS constructions, we point out that any construction in this section can be used to realize a desired PS as well. 

For a set $S\subset \N$, we adopt terminology of \cite{DoeringPavlov} and say $S$ is \textit{closed under $\N$-multiples} if for any $s\in S$ and $k \in \N$, $k\cdot s\in S$. It is immediate from the definition that for a given dynamical system $(X,f)$, $$\operatorname{PS}(f)=\{n\cdot s:n\in \N, s\in \operatorname{LPS}(f)\},$$ so $\operatorname{PS}(f)$ is closed under $\N$-multiples. The theorem statements in this section are of the form
\begin{itemize}
    \item[] Given $S\subset \N$ with property $\mathcal{P}$, there is a shift space $X$ of type $\mathcal{T}$ such that $\operatorname{LPS}(X)=S$.
\end{itemize} The described properties $\mathcal{P}$ are compatible with being closed under $\N$-multiples, so we can replace the above with \begin{itemize}
    \item[] Given a $S\subset \N$ with property $\mathcal{P}$ that is closed under $\N$-multiples, there is a shift space $X$ fo type $\mathcal{T}$ such that $\operatorname{PS}(X)=S$.
\end{itemize} 

The constructions are identical, and conclude by noting that if $S$ is closed under $\N$-multiples and $\operatorname{LPS}(X)=S$, then $\operatorname{PS}(X)=S$. Of course, one can ignore finite sets $S$: a finite set that is closed under $\N$-multiples must be empty. 

In particular, this observation means that Theorem \ref{thm-realizeLPSsofic} allows us to construct a sofic shift that realizes a period set of the form outlined in Theorem \ref{thm-PSofrationalzeta}.

\subsection{SFTs}

The following result is a converse to Theorem \ref{thm-LPSofirredSFT-intro}, which is also due to Doering and Pavlov. For completeness, we repeat their constructive argument here. For a graph $G$, a \textit{simple cycle} is a cycle where every vertex has exactly one incoming edge, and exactly one outgoing edge. We call the \textit{edge shift} of $G$ the collection of all bi-infinite walks on $G$ encoded as sequences of edges. We denote this shift $X_G$.

\begin{theorem}[\cite{DoeringPavlov}]\label{thm-realizeLPSirredSFT}
    Suppose $R$ is a singleton $\{  d \}$ or $dS$ for $S$ cofinite. Then there exists an irreducible SFT whose LPS is $R$. Moreover, if $R$ is $\{1\}$ or a cofinite set, there exists a mixing SFT whose LPS is $R$.
\end{theorem}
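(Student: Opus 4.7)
For the singleton case $R = \{d\}$ (which includes $R = \{1\}$ in the mixing statement), I would take $G$ to be the directed $d$-cycle on vertices $v_0, v_1, \ldots, v_{d-1}$ with edges $v_i \to v_{(i+1) \bmod d}$. The edge shift $X_G$ is a single periodic orbit of length $d$, so $\operatorname{LPS}(X_G) = \{d\}$; the graph is strongly connected, hence $X_G$ is irreducible, and for $d = 1$ the single loop yields a mixing SFT.

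For $R = dS$ with $S$ cofinite, my plan is to first realize an arbitrary cofinite $S$ as the LPS of a mixing graph $H$, and then to obtain the general $d$ by a lifting. The lifting subdivides each directed edge of $H$ into a directed path of $d$ new edges, producing a graph $G$ in which every cycle length is scaled by $d$. Hence $\operatorname{LPS}(X_G) = dS$, irreducibility is preserved, and the global period of $G$ equals $d$ because the $\gcd$ of cycle lengths in $H$ was $1$. For the mixing assertion, $d = 1$ and the lift is trivial, so the task reduces to the construction of $H$.

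Let $F = \N \setminus S$ (finite) and $N = \max(F) + 1$, so $[N, \infty) \subseteq S$. The graph $H$ will be composed of simple cycles whose lengths lie in $S$, attached via a connecting structure tuned so that every primitive closed walk also has length in $S$. A first approximation is a bouquet at a single vertex $v$: cycles of length $s$ for each $s \in S \cap [1, N]$ together with cycles of lengths $N$ and $N+1$ (so that $\gcd = 1$, giving mixing, and so that their nonnegative integer combinations cover all sufficiently large integers), with two distinct copies of each length to guarantee that repeated-length sums $k\ell$ admit primitive arrangements such as $(a, \ldots, a, b)$. Closed walks at $v$ are in bijection with sequences of cycle traversals, and primitivity of the walk reduces to primitivity of the sequence; this readily yields the inclusion $S \subseteq \operatorname{LPS}(X_H)$.

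The main obstacle is the reverse inclusion $\operatorname{LPS}(X_H) \subseteq S$: I must forbid a primitive closed walk of every length $t \in F$. The bouquet above can fail here because a distinct-index sequence summing to some $t \in F$ will be primitive. For instance, with $S = \{1, 4, 6, 7, \ldots\}$ and $t = 5 \in F$, the sequence $(\text{loop},\,4\text{-cycle})$ at $v$ is primitive of length $5$. To avoid this the fix is to place certain problematic cycles at a separate vertex $v'$ and connect it to $v$ by tuned directed paths whose lengths are chosen so that the shortest closed walk passing through both $v$ and $v'$ has length in $S$; in the example above, paths of lengths $2$ and $4$ from $v$ to $v'$ and back make that shortest combined cycle have length $6 \in S$, and one then checks that no primitive walk of length $2$, $3$, or $5$ exists. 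Performing this path-tuning systematically so that for every cofinite $S$ all forbidden $t \in F$ are excluded while every $s \in S$ remains primitive is the delicate combinatorial heart of the argument, which I would execute by following the Doering--Pavlov construction.
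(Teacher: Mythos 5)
Your singleton case and the edge-subdivision lift to handle general $d$ (replace each edge of the period-one graph $H$ by a directed path of length $d$) match the paper exactly, and you correctly identify the central obstruction: a naive bouquet of cycles of lengths in $S$ at a single vertex will generically create primitive closed walks of lengths in $F = \N\setminus S$, since concatenating two cycles of distinct lengths $a,b$ yields a primitive walk of length $a+b$, which may lie in $F$. However, your proposal stops short at precisely this point: the ``path-tuning'' step that forbids every $t\in F$ is not actually specified, and you explicitly defer to ``the Doering--Pavlov construction.'' As written, the proof has a genuine gap at its combinatorial core.

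The paper's resolution is a hierarchical attachment scheme that makes the problem disappear structurally rather than by case-by-case tuning. Writing $S = F'\cup\{N,N+1,\dots\}$ with $F'=\{f_1<\cdots<f_k\}$ and $N>\max F'$, one builds a simple $N$-cycle, attaches a simple $f_j$-cycle to a distinct vertex of the $N$-cycle for each $j$, and then attaches simple cycles of lengths $N+1,\dots,N+f_1-1$ to distinct vertices of the $f_1$-cycle. Any closed walk using only one simple cycle has length $f_j$ or $N+i$, both in $S$. Any closed walk visiting more than one simple cycle must traverse the entire $N$-cycle (to move between two $f_j$-cycles) or enter an $(N+i)$-cycle (length $\geq N$), so its length is automatically $\geq N$ and hence in $S$; no forbidden length can appear. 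Surjectivity onto $\{N,N+1,\dots\}$ follows by writing $n = qf_1 + (N+i)$ and going around the $(N+i)$-cycle once and the $f_1$-cycle $q$ times. Mixing holds because the $N$- and $(N+1)$-cycles are coprime. The key idea you are missing is that one should not try to exclude each $t\in F$ individually, but rather arrange the cycles so that any ``composite'' closed walk is forced to have length at least $N$.
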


\begin{proof}
    We will construct a graph $G$ such that $X_G$ is irreducible and $$\operatorname{LPS}(X_G)=R.$$

    If $R=\{ d \}$, let $G$ be a simple cycle of length $d$. Then the edge shift of $G$ consists of $d$ points each with least period $d$, so $\operatorname{LPS}(X_G)=\{d\}$.

Suppose now that $R=dS$ with $S$ cofinite. We can write $$S=F\cup \{ N,N+1,N+2,\dots \}$$ where $F=\{f_1,...,f_k\}$ is finite, $f_j<f_{j+1}<N$ for all $1\leq j < k$. We begin by constructing a graph $G$ such that $X_G$ is a \textit{mixing} SFT with $\operatorname{LPS}(X_G)=S$. First, construct a simple cycle of length $N$. For each $f_j\in F$, attach a simple cycle of length $f_j$ to this first cycle, so that each vertex in the $N$-cycle has at most one smaller cycle attached to it. Each of these smaller cycles gives a periodic point with least period $f_j$, so $$F\subset\operatorname{LPS}(X_G).$$

Recall that $f_1=\min F$. For all $i\in \{1,2,...,f_1-1\},$ attach a simple cycle of length $N+i$ to the $i$-th vertex in the $f_1$-cycle, completing the graph construction. See Figure \ref{fig:sft} for a visualization, where each cycle is labeled by its length. 

If $n\geq N$, we can write $n=qf_1+N+i$ for some $q\in \N_0, 0\leq i <f_1$. By traversing the $N+i$-cycle once, and the $f_1$-cycle $q$ times, we obtain a point with least period $n$, hence
$$\{N,N+1,N+2,...\}\subset \operatorname{LPS}(X_G).$$

It remains to show that the least period of any periodic point in $X_G$ is in $S$. Suppose that $x\in X_G$ is periodic, and consider two cases. If $x$ only traverses a simple cycle, then this must either be an $f$-cycle for $f\in F$ or an $N+i$-cycle for $0\leq i < f_1$. In both cases, the least period of $x$ is in $S$ by construction. 

Suppose now that $x$ traverses more than one simple cycle from our construction. Then, necessarily the least period of $x$ must be at least $N$. However, any such natural number is in $S$ by assumption, so we have exactly realized the desired LPS. Since $G$ has two coprime cycles, one of length $N$ and one of length $N+1$, the graph period of $G$ is 1, and hence $X_G$ is mixing.

\begin{figure}
\begin{center}
\begin{tikzpicture}
    \foreach \x in {0,...,2} {
            \node[style={circle,thick,draw}] (\x) at ({-0.5+3*cos(\x/3 * 360)},{3 * sin(\x/3 * 360}) {};
        }
    \path [->, thick, bend right=50, dashed] (0) edge node[above] {} (1);
    \path [->, thick, bend right=50, dashed] (1) edge node[above] {} (2);
    \path [->, thick, bend right=50, dashed] (2) edge node[above] {} (0);
    \node[style={circle}] (o) at (-0.5,0) {$N$};

    \foreach \x in {-1,...,1} {
            \node[style={circle,thick,draw}] (\x) at ({4+1.5*cos(\x/4 * 360+180)},{1.5 * sin(\x/4 * 360+180}) {};
        }
     \node[style={circle}] (-2) at ({4+1.5*cos(-3/8 * 360+180)},{1.5 * sin(-3/8 * 360+180}) {};
     \node[style={circle}] (2) at ({4+1.5*cos(3/8 * 360+180)},{1.5 * sin(3/8 * 360+180}) {};
     \path [->, thick, bend right=30] (0) edge node[above] {} (1);
     \path [->, thick, bend right=30] (-1) edge node[above] {} (0);
     \path [thick, bend right=20, dashed] (1) edge node[above] {} (2);
     \path [->, thick, bend right=20, dashed] (-2) edge node[above] {} (-1);
     \node[style={circle}] (o) at (4,0) {$f_1$};

    \foreach \x in {0,...,2} {
            \node[style={circle,thick,draw}] (\x) at ({4+2*cos(\x/6 * 360+360/12)},{-3.5+2 * sin(\x/6 * 360+360/12}) {};
        }

    \node[style={circle}] (-1) at ({4+2*cos(-1/6 * 360+360/12)},{-3.5+2 * sin(-1/6 * 360+360/12}) {};
    \node[style={circle}] (3) at ({4+2*cos(3/6 * 360+360/12)},{-3.5+2 * sin(3/6 * 360+360/12}) {};
    \path [->, thick, bend right=20, dashed] (-1) edge node[above] {} (0);
    \path [->, thick, bend right=20] (0) edge node[above] {} (1);
    \path [->, thick, bend right=20] (1) edge node[above] {} (2);
    \path [thick, bend right=20, dashed] (2) edge node[above] {} (3);
    \node[style={circle}] (o) at (4,-3.5) {$N+f_1-1$};

    \foreach \x in {0,...,2} {
            \node[style={circle,thick,draw}] (\x) at ({4+2*cos(\x/6 * 360+360/12+180)},{3.5+2 * sin(\x/6 * 360+360/12+180}) {};
        }

    \node[style={circle}] (-1) at ({4+2*cos(-1/6 * 360+360/12+180)},{3.5+2 * sin(-1/6 * 360+360/12+180}) {};
    \node[style={circle}] (3) at ({4+2*cos(3/6 * 360+360/12+180)},{3.5+2 * sin(3/6 * 360+360/12+180}) {};
    \path [->, thick, bend right=20, dashed] (-1) edge node[above] {} (0);
    \path [->, thick, bend right=20] (0) edge node[above] {} (1);
    \path [->, thick, bend right=20] (1) edge node[above] {} (2);
    \path [thick, bend right=20, dashed] (2) edge node[above] {} (3);
    \node[style={circle}] (o) at (4,3.5) {$N+1$};

     \foreach \x in {-1,...,1} {
            \node[style={circle,thick,draw}] (\x) at ({-0.5-4.5*cos(60)+1.5*cos(\x/4 * 360+60)},{-4.5*sin(60)+1.5 * sin(\x/4 * 360+60}) {};
        }
     \node[style={circle}] (-2) at ({-0.5-4.5*cos(60)+1.5*cos(-3/8 * 360+60)},{-4.5*sin(60)+1.5 * sin(-3/8 * 360+60}) {};
     \node[style={circle}] (2) at ({-0.5-4.5*cos(60)+1.5*cos(3/8 * 360+60)},{-4.5*sin(60)+ 1.5 * sin(3/8 * 360+60}) {};
     \path [->, thick, bend right=30] (0) edge node[above] {} (1);
     \path [->, thick, bend right=30] (-1) edge node[above] {} (0);
     \path [thick, bend right=20, dashed] (1) edge node[above] {} (2);
     \path [->, thick, bend right=20, dashed] (-2) edge node[above] {} (-1);
     \node[style={circle}] (o) at (4,0) {$f_1$};
    \node[style={circle}] (o) at ({-0.5-4.5*cos(60)},{-4.5*sin(60)}) {$f_j$};

     \foreach \x in {-1,...,1} {
            \node[style={circle,thick,draw}] (\x) at ({-0.5-4.5*cos(60)+1.5*cos(\x/4 * 360-60)},{4.5*sin(60)+1.5 * sin(\x/4 * 360-60}) {};
        }
     \node[style={circle}] (-2) at ({-0.5-4.5*cos(60)+1.5*cos(-3/8 * 360-60)},{4.5*sin(60)+1.5 * sin(-3/8 * 360-60}) {};
     \node[style={circle}] (2) at ({-0.5-4.5*cos(60)+1.5*cos(3/8 * 360-60)},{4.5*sin(60)+ 1.5 * sin(3/8 * 360-60}) {};
     \path [->, thick, bend right=30] (0) edge node[above] {} (1);
     \path [->, thick, bend right=30] (-1) edge node[above] {} (0);
     \path [thick, bend right=20, dashed] (1) edge node[above] {} (2);
     \path [->, thick, bend right=20, dashed] (-2) edge node[above] {} (-1);
     \node[style={circle}] (o) at (4,0) {$f_1$};
    \node[style={circle}] (o) at ({-0.5-4.5*cos(60)},{4.5*sin(60)}) {$f_{j'}$};

\end{tikzpicture}
\caption{\label{fig:sft} An SFT construction.}
\end{center}
\end{figure}
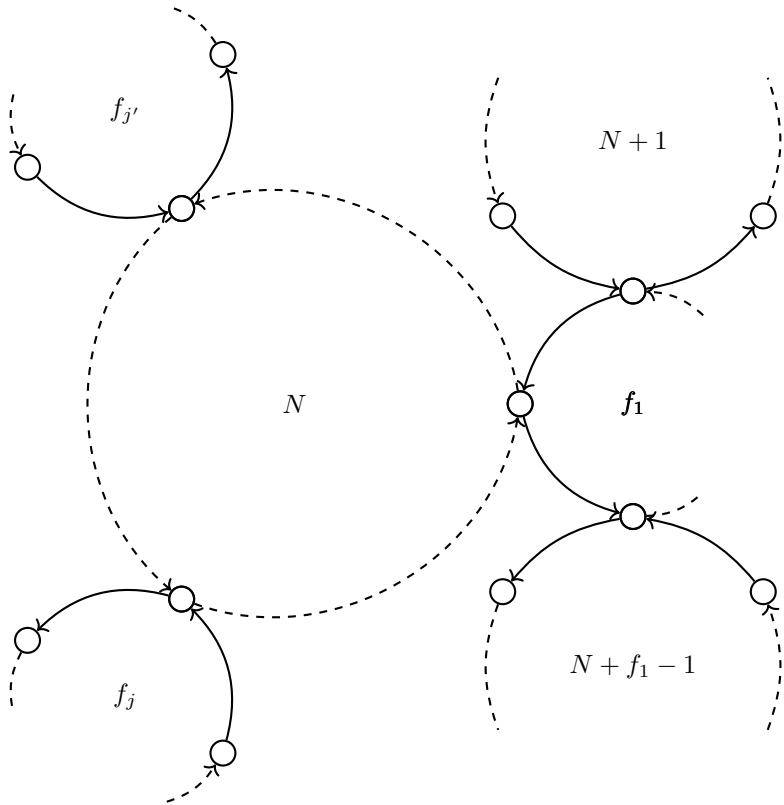

We have just shown the second half of the theorem statement, namely the case when $d=1$.  If $d>1$, it suffices to replace every edge in $G$ with a path of $d$ vertices. Call this new graph $G'$, and note that it is necessarily irreducible. Then for all $n\in \N$ there is a bijection between the points of least period $n$ in $X_G$ and the points of least period $dn$ in $X_G'$. That is, 
$$\operatorname{LPS}(X_{G'})=d\cdot\operatorname{LPS}(X_G),$$
completing the proof.
\end{proof}

Note that the above construction can be used to realize a set of the form $$F\cup \bigcup_{i=1}^nd_iS_i$$
as the LPS of a (generally) reducible SFT. This is done by simply treating each singleton in $F$ and each set $d_iS_i$ as above, and taking the edge shift on the graph consisting of several disconnected components.

\subsection{Sofic shifts}\label{realization-sofic}

At the end of the previous section, we noted how to construct a certain SFT to realize more complicated LPSs than the ones in Theorem \ref{thm-realizeLPSirredSFT}. The cost of restricting ourselves to SFTs is then that the resulting shift is not irreducible. However, below we will realize almost all such sets as the LPS of some irreducible sofic shift on a binary alphabet. 

We will construct a graph $G$ with edge labeling $\Phi$ such that the shift on the labelled graph $\mathcal{G}=(G,\Phi)$, which we denote $X_\mathcal{G}$, has the desired LPS. Since sofic shifts are FP, the following result provides a converse to Theorem \ref{thm-LPSofFP-intro}.

\begin{theorem}\label{thm-realizeLPSsofic}
    Suppose $S$ is a singleton or a set of the form $$F\cup\bigcup_{i=1}^n d_{i}S_{i}$$ where $F$ is finite, each $S_i$ is cofinite, and $n\geq 1$ (i.e. there is at least one cofinite $S_i$). Then there is an irreducible sofic shift with alphabet $\{0,1\}$ whose LPS is $S$.
\end{theorem}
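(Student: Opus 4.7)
The plan is to explicitly construct a labelled graph $\mathcal{G} = (G, \Phi)$ on the alphabet $\{0,1\}$ such that the associated sofic shift $X_\mathcal{G}$ is irreducible and satisfies $\operatorname{LPS}(X_\mathcal{G}) = S$. If $S = \{m\}$ is a singleton, take $G$ to be a simple $m$-cycle with labels forming a primitive binary word of length $m$ (for example $10^{m-1}$ when $m \geq 2$, or a constant label when $m = 1$); the resulting sofic shift is a single orbit of least period $m$. Otherwise, write $S = F \cup \bigcup_{i=1}^n d_i S_i$ and, by shifting finitely many elements between $F$ and the $d_i S_i$'s, assume each $d_i S_i$ has the form $d_i(\mathbb{N} + N_i)$ for some $N_i$ chosen sufficiently large.

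For each index $i$, construct an SFT gadget $H_i$ as in Theorem \ref{thm-realizeLPSirredSFT} (a main $d_iN_i$-cycle with an attached $d_i$-side cycle) whose edge shift has LPS $d_i(\mathbb{N}+N_i)$, and for each $f \in F$ construct a simple $f$-cycle $C_f$. Glue all of these gadgets at a common hub vertex $v_0$ to obtain a strongly connected graph $G$. The labelling $\Phi$ must be chosen so that: (a) each pure closed walk inside a single gadget carries a primitive binary label of its length (for example, a single $1$ plus $0$'s around each main cycle, as in a gap shift presentation), and (b) distinct gadgets are equipped with distinguishing \emph{signature} markers, built out of runs of $1$'s of varying length near $v_0$, so that any closed walk's label reveals which gadgets were traversed and in what order.

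The verification that $\operatorname{LPS}(X_\mathcal{G}) = S$ is the heart of the proof. The inclusion $S \subseteq \operatorname{LPS}(X_\mathcal{G})$ is immediate from the pure gadget walks. The reverse inclusion requires controlling \emph{mixed walks} that cross multiple gadgets: such a closed walk has label length equal to the sum of the gadget cycle lengths traversed, and its primitive period must be shown to lie in $S$. I would use the signature markers to force the primitive period of a mixed walk's label to equal the total walk length, and then leverage the cofiniteness of each $S_i$ (together with a large enough choice of $N_i$) to argue that this total length lies in $S$.

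The main obstacle is the arithmetic one: sums of cycle lengths drawn from different gadgets lie in multiples of $\gcd(d_1, \ldots, d_n)$, and these may a priori fall outside $\bigcup_i d_i S_i$. To handle this, I would insert auxiliary bridge edges between gadgets whose lengths and labels are chosen so that every mixed-walk label either is a repetition of a pure gadget label (and hence has primitive period already in $S$) or has total length falling within the cofinite tail of some $d_i S_i$ after absorption. Ensuring that this delicate balance is compatible with strong connectivity of $G$ and with the binary alphabet constraint is the step most likely to require a subtle combinatorial argument — and any exceptional arithmetic configurations that resist this approach can be absorbed into $F$ at the start of the construction.
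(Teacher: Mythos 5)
Your high-level architecture (one gadget per arithmetic progression, one cycle per element of $F$, everything glued at a hub, with label ``signatures'' forcing the least period of a mixed point to equal its full combinatorial length) matches the paper's plan, but the step you flag as ``the main obstacle'' is exactly where the proposal breaks, and the remedies you offer cannot close it. Take $S=2\mathbb{N}\cup 3\mathbb{N}$: with your gadgets, the hub has attached closed excursions of lengths $2$ and $3$ (or more generally of lengths $d_i\cdot(\text{small})$ from different gadgets), and concatenating them produces periodic points of least period $5$, $7$, etc.\ --- infinitely many values outside $S$. These cannot be ``absorbed into $F$'': $F$ is part of the \emph{given} set $S$, so enlarging it changes the set you are supposed to realize; and labels or bridge edges cannot forbid these points, since every bi-infinite walk on the underlying graph presents a point of the sofic shift, and making its least period equal to the total walk length (which your markers do) is precisely what puts that unwanted length into the LPS. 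So the proposal as written would realize a strictly larger set than $S$ whenever the $d_i(\mathbb{N}+N_i)$ are not already closed under mutual addition.

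The paper's construction resolves this with an idea absent from your proposal: it arranges that \emph{every} closed excursion from the hub, in every section, has length in the common intersection $\bigcap_i d_i(\mathbb{N}+k_i)=d^*(\mathbb{N}+k^*)$, which is closed under addition and contained in $S$; hence any mixed least period automatically lands in $S$. Concretely, the cycle-sections for $j\in F'$ are cycles of length $c_j\cdot j$ labeled by $(10^{j-1})^{c_j}$ with $c_j\cdot j\in d^*(\mathbb{N}+k^*)$ (pure points still have least period $j$, but hub-return times are multiples of $c_j\cdot j$), and each progression $d_i(\mathbb{N}+k_i)$ is realized not by a ``main cycle plus side cycle'' SFT gadget but by a $t_i\times t_i$ \emph{torus} of paths labeled by two words $u_i,v_i$ of lengths $d_i(k_i+1)$ and $d_i(k_i+2)$: the pure periodic points $(u_i^av_i^b)^\infty$ never need to pass through the hub and give cofinitely much of $d_i(\mathbb{N}+k_i)$, while any walk returning to the hub must wrap around the torus and thus use $u_i$ and $v_i$ multiples of $t_i$ times, where $t_i$ is chosen so that $t_i|u_i|,t_i|v_i|\in d^*(\mathbb{N}+k^*)$. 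Decoupling ``periods produced inside a gadget'' from ``lengths of hub-to-hub excursions'' is the missing mechanism; without it (or a substitute playing the same role), the mixed-walk analysis in your proposal cannot be completed.
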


\begin{proof}
    If $S=\{ d \}$, let $w=10^{d-1}$. Then the shift consisting of the orbit of $w^\infty$ is sofic (in fact of finite type), irreducible, and has LPS $S$.

Otherwise, $S$ can be written as 
$$S=F\cup\bigcup_{i=1}^nd_i(\mathbb{N} +k_{i} ),$$ where $k_i$ are such that $$d_i(k_i+1)>2i.$$

Recalling that $\N:=\{1,2,3,...\}$, we begin by writing
$$\bigcap_{i=1}^n d_i(\mathbb{N}+k_i)=d^*(\mathbb{N}+k^*),$$
where
$$d^*=\operatorname{lcm}(\{d_i:1\leq i \leq n\}),\quad k^*=\min\{k\in\N:\forall i, d^*(k+1)\geq d_i(k_i+1)  \}$$

Our construction now goes in several steps. At each step, we will create several individual graphs dubbed \textit{sections} to realize some part of the given LPS. 

\begin{enumerate}
\item  
Consider the set $d_i(\mathbb{N}+k_{i})$ for $i\geq 1$. We will make a section called $\mathcal{T}_i$ to realize most of this set as least periods as follows.

Since we assumed $d_i(k_i+1)>2i$, we also have $d_i(k_i+2)>2i+1$. We set $$u_i=1^{2i}0^{d_i(k_i+1)-2i} \quad \text{and} \quad v_i=1^{2i+1}0^{d_i(k_i+2)-2i-1},$$ so that $|u_i|=d_i(k_i+1),|v_i|=d_i(k_i+2).$

Let $t_i\in\N$ be such that $t_i>1$ and $$t_{i}|u_i|,t_i|v_i|\in d^*(\mathbb{N}+k^*).$$ The exact choice of $t_i$ does not matter, but the latter condition generally requires it to be large.

The section $\mathcal{T}_i$ that we build will be a \textit{torus-section}. We first make a $t_i$ by $t_i$ grid. From each node $(n,m)$ in this grid, add a path to $(n,m+1 \mod t_{i})$ labeled $u_i$ and a path to $(n+1 \mod t_{i},m)$ labeled $v_i$. Within the labelled graph $\mathcal{T}_i$, we can make a bi-infinite path labeled $$(u_i^av_i^b)^\infty$$ for all $a,b\in \mathbb{N}_0=\N\cup\{0\}$. Moreover, any other periodic point in $X_{\mathcal{T}_i}$ will, up to a shift, be of the form $w^\infty$ for $$w=u_i^{n_1}v_i^{m_1}...u_i^{n_k}v_i^{m_k}$$ not purely periodic. The least period of this point is $$\sum_{j=1}^kn_jd_i(k_i+1)+m_jd_i(k_i+2)=ad_i(k_i+1)+bd_i(k_i+2)$$ for some $a,b\in \N_0$. 

Thus, $\operatorname{LPS}(X_{\mathcal{T}_i})$ is $d_i(\mathbb{N}+k_{i})$, \textit{except} for possibly a finite subset. We will treat these missed periods in step 2. 

Figure \ref{fig:torus} is a visual presentation of a torus-section. Two ends of a snaked arrow on the same column or row are in fact a single path, and each arrow labeled $u_i$ or $v_i$ represents a path of sufficiently many edges labeled by the symbols in these words. Call the (0,0) node the \textit{root node}, indicated here by \scalebox{0.6}{$\begin{tikzpicture} \node[style={double,circle,thick,draw}] (A) at (0,0) {};  \end{tikzpicture}$}. 

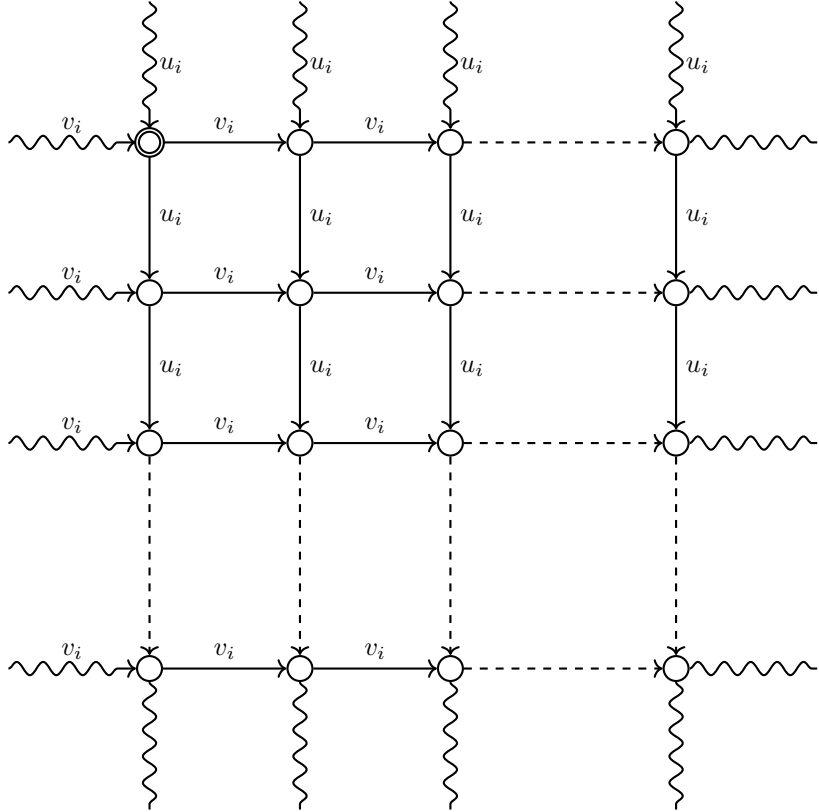
\begin{figure}
\begin{center}
\begin{tikzpicture}

    \node (B0) at (-4,2) {};
    \node (C0) at (-2,2) {};
    \node (D0) at (0,2) {};
    \node (E0) at (3,2) {};

\begin{scope}[every node/.style={circle,thick,draw}]
  
    \node[style=double] (B1) at (-4,0) {};
    \node (C1) at (-2,0) {};
    \node (D1) at (0,0) {};
    \node (E1) at (3,0) {};
\end{scope}
\node (A1) at (-6,0) {};

\node (F1) at (5,0) {};

\draw[->,decorate,decoration={snake,post length = 5pt}, thick] (B0) -- node[right] {$u_i$} (B1);
\draw[->,decorate,decoration={snake,post length = 5pt}, thick] (C0) -- node[right] {$u_i$} (C1);
\draw[->,decorate,decoration={snake,post length = 5pt}, thick] (D0) -- node[right] {$u_i$} (D1);
\draw[->,decorate,decoration={snake,post length = 5pt}, thick] (E0) -- node[right] {$u_i$} (E1);

\draw[->,decorate,decoration={snake,post length = 5pt}, thick] (A1) -- node[above] {$v_i$} (B1);
\path [->,thick] (B1) edge node[above] {$v_i$} (C1);
\path [->,thick] (C1) edge node[above] {$v_i$} (D1);
\path [->,dashed,thick] (D1) edge node[above] {} (E1);

\draw[decorate,decoration={snake}, thick]       (E1) -- (F1);

\begin{scope}[every node/.style={circle,thick,draw}]
  
    \node (B2) at (-4,-2) {};
    \node (C2) at (-2,-2) {};
    \node (D2) at (0,-2) {};
    \node (E2) at (3,-2) {};
\end{scope}
\node (A2) at (-6,-2) {};

\node (F2) at (5,-2) {};

\draw[->,decorate,decoration={snake,post length = 5pt}, thick] (A2) -- node[above] {$v_i$} (B2);
\path [->,thick] (B2) edge node[above] {$v_i$} (C2);
\path [->,thick] (C2) edge node[above] {$v_i$} (D2);
\path [->,dashed,thick] (D2) edge node[above] {} (E2);

\draw[decorate,decoration={snake}, thick]       (E2) -- (F2);

\path [->,thick] (B1) edge node[right] {$u_i$} (B2);
\path [->,thick] (C1) edge node[right] {$u_i$} (C2);
\path [->,thick] (D1) edge node[right] {$u_i$} (D2);
\path [->,thick] (E1) edge node[right] {$u_i$} (E2);

\begin{scope}[every node/.style={circle,thick,draw}]
  
    \node (B3) at (-4,-4) {};
    \node (C3) at (-2,-4) {};
    \node (D3) at (0,-4) {};
    \node (E3) at (3,-4) {};
\end{scope}
\node (A3) at (-6,-4) {};

\node (F3) at (5,-4) {};

\draw[->,decorate,decoration={snake,post length = 5pt}, thick] (A3) -- node[above] {$v_i$} (B3);
\path [->,thick] (B3) edge node[above] {$v_i$} (C3);
\path [->,thick] (C3) edge node[above] {$v_i$} (D3);
\path [->,dashed,thick] (D3) edge node[above] {} (E3);

\draw[decorate,decoration={snake}, thick]       (E3) -- (F3);

\path [->,thick] (B2) edge node[right] {$u_i$} (B3);
\path [->,thick] (C2) edge node[right] {$u_i$} (C3);
\path [->,thick] (D2) edge node[right] {$u_i$} (D3);
\path [->,thick] (E2) edge node[right] {$u_i$} (E3);

\begin{scope}[every node/.style={circle,thick,draw}]
  
    \node (B4) at (-4,-7) {};
    \node (C4) at (-2,-7) {};
    \node (D4) at (0,-7) {};
    \node (E4) at (3,-7) {};
\end{scope}
\node (A4) at (-6,-7) {};

\node (F4) at (5,-7) {};

\draw[->,decorate,decoration={snake,post length = 5pt}, thick] (A4) -- node[above] {$v_i$} (B4);
\path [->,thick] (B4) edge node[above] {$v_i$} (C4);
\path [->,thick] (C4) edge node[above] {$v_i$} (D4);
\path [->,dashed,thick] (D4) edge node[above] {} (E4);

\draw[decorate,decoration={snake}, thick]       (E4) -- (F4);

\path [->,thick, dashed] (B3) edge (B4);
\path [->,thick, dashed] (C3) edge (C4);
\path [->,thick, dashed] (D3) edge (D4);
\path [->,thick, dashed] (E3) edge (E4);


    \node (B5) at (-4,-9) {};
    \node (C5) at (-2,-9) {};
    \node (D5) at (0,-9) {};
    \node (E5) at (3,-9) {};
\draw[decorate,decoration={snake}, thick]   (B4) -- (B5);
\draw[decorate,decoration={snake}, thick]  (C4) -- (C5);
\draw[decorate,decoration={snake}, thick]   (D4) -- (D5);
\draw[decorate,decoration={snake}, thick]   (E4) -- (E5);

\end{tikzpicture}
\caption{\label{fig:torus} A torus-section.}
\end{center}
\end{figure}

\item Next, write $F':= S\setminus \bigcup_{i=1}^n \operatorname{LPS}(X_{\mathcal{T}_i})$, and note that $F'$ is finite. We will realize each $j\in F'$ by making a labeled \textit{cycle-section} $\mathcal{C}_j$. First, if $j\neq 1$, construct a simple cycle of length $c_j\cdot j$, where $c_j$ is such that $c_j\cdot j\in d^*(\mathbb{N}+k^*)$ and $c_j>1$. Label this cycle with the word $$w_j=10^{j-1}$$ repeated $c_j$ times. 

If $j=1$, make a simple cycle of length $c_1$ such that $c_1\in d^*(\mathbb{N}+k^*)$ and $c_1>2n+1$. Label this cycle with the word $w_1=1$ repeated $c_1$ times. Thus, for all $j\in F'$, we have $\operatorname{LPS}(X_{\mathcal{C}_j})=\{j\}$. 

Figure \ref{fig:cycle} is a visual representation of such a cycle-section, with the same visual indicators as in step 1. Again, call the 0 node the root node, indicated by \scalebox{0.6}{$\begin{tikzpicture} \node[style={double,circle,thick,draw}] (A) at (0,0) {};  \end{tikzpicture}$}. 

\begin{figure}
\begin{center}
\begin{tikzpicture}
\begin{scope}[every node/.style={circle,thick,draw}]
  
    \node[style=double] (B) at (-4,0) {};
    \node (C) at (-2,0) {};
    \node (D) at (0,0) {};
    \node (E) at (3,0) {};
\end{scope}
\node (A) at (-6,0) {};

\node (F) at (5,0) {};

\draw[->,decorate,decoration={snake,post length = 5pt}, thick] (A) -- node[above] {$w_j$} (B);
\path [->,thick] (B) edge node[above] {$w_j$} (C);
\path [->,thick] (C) edge node[above] {$w_j$} (D);
\path [->,dashed,thick] (D) edge node[above] {} (E);

\draw[decorate,decoration={snake}, thick]       (E) -- (F);
\end{tikzpicture}
\caption{\label{fig:cycle} A cycle-section.}
\end{center}
\end{figure}
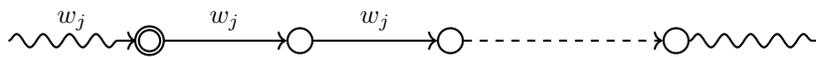

\item Finally, identify the root nodes \scalebox{0.6}{$\begin{tikzpicture} \node[style={double,circle,thick,draw}] (A) at (0,0) {};  \end{tikzpicture}$} of each section together, so that the graph is irreducible. Call this labeled graph $\mathcal{G}$.
\end{enumerate}

We now have an irreducible sofic shift $X_\mathcal{G}$. By construction, $$S=\bigcup_{j\in F'} \operatorname{LPS}(X_{\mathcal{C}_j}) \cup \bigcup_{i=1}^n\operatorname{LPS}(X_{\mathcal{T}_i}) \subset \operatorname{LPS}(X_\mathcal{G}).$$ It remains to show that no other least periods were created in joining the sections together. 

Our construction is such that the shifts on all $\mathcal{T}_i$ and $\mathcal{C}_j$ are disjoint. Having taken $t_i,c_j>1$, we also have the following property for bi-infinite labeled paths $\tau$ on $\mathcal{G}$. \begin{enumerate}
    \item $\tau$ contains $1^{c_1}$ if and only if it passes through $\mathcal{C}_1$,
    \item $\tau$ contains the word $010^j1$ if and only if it passes through $\mathcal{C}_j$, $j\neq 1$, and
    \item $\tau$ contains $01^{2i}0$ or $01^{2i+1}0$ if and only if it passes through $\mathcal{T}_i$.
\end{enumerate} Moreover, a path that goes from section to another must pass through the root node. 

Let $\tau$ be a bi-infinite path in $G$ that presents a periodic point $x\in X_\mathcal{G}$. If $\tau$ only passes through a single section, the least period of $x$ is in $S$ by construction.

Otherwise, the above allows us to write, up to a shift, $x=w^\infty$ with $$w=\overline{w}_{1}\overline{w}_{2}\cdots\overline{w}_{m},$$ with $w$ not purely periodic and where each $\overline{w}_{k}$ is presented by a path in a single section, starting and ending at the root node.

Writing $x$ in this way, the least period of $x$ is
$$\sum_{k=1}^m |\overline{w}_{k}|,$$
which will be shown to be in $d^*(\N+k^*)$, and hence in $S$.
Since $d^*(\N+k^*)$ is closed under addition, it suffices to show that 
$$|\overline{w}_{k}|\in d^*(\N+k^*)$$
for all $k$.

Suppose $\overline{w}_{k}$ traverses a torus-section $\mathcal{T}_{i_k}$. Then $\overline{w}_{k}$ must consist of a combination of the words $u_{i_k},v_{i_k}$ such that each word appears some integer multiple of $t_{i_k}$ times. In other words, $$|\overline{w}_{k}|=a\cdot t_{i_k}|u_{i_k}|+b\cdot t_{i_k}|v_{i_k}|$$
for some $a,b\in \N_0$, not both 0. Recalling again that $t_{i_k}|v_{i_k}|,t_{i_k}|u_{i_k}|\in d^*(\N+k^*)$, and that this latter set is closed under addition, we get that
$$|\overline{w}_{k}|\in d^*(\N+k^*).$$

If $\overline{w}_k$ traverses a cycle-section $\mathcal{C}_{j_k}$, the argument is similar. Indeed, since the path presenting $\overline{w}_{k}$ starts and ends at the root node, the length of this path must be an integer multiple of the cycle length of $\mathcal{C}_{j_k}$. That is,
$$|\overline{w}_{k}|=a'\cdot c_{j_k} \cdot j_k \in d^*(\N+k^*),$$
for some $a'\in \N$.
Having shown that $|\overline{w}_{k}|\in d^*(\N+k^*)$ for all $j$, we conclude that the least period of $x$ is in $d^*(\N+k^*)\subset S$, as desired. 

We have thus shown that $$\operatorname{LPS}(X_\mathcal{G})=S,$$ completing the proof.
\end{proof}

\subsection{Gap shifts}\label{sec-gapconstruction}

The inherent structure of gap shifts allows us to realize their LPS with relative ease. In fact, we obtain the desired LPS via a \textit{sofic} gap shift. The majority of the proof below follows from Proposition \ref{prop-almostsumprops} and our observations during the proof of Theorem \ref{thm-LPSofgap}.

\begin{theorem}\label{thm-realizeLPSgap}
    Let $Q$ be a set as outlined in the conclusion of Theorem \ref{thm-LPSofgap}. Then there exists a sofic $S$-gap shift whose LPS is $Q$. Moreover, if $Q$ is almost sum-closed, then there exists an $S$-gap SFT with least period set $Q$.
\end{theorem}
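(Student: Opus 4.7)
The plan is to choose, for each of the three forms of $Q$ listed in Theorem \ref{thm-LPSofgap}, an appropriate gap set $S \subset \N_0$ so that the LPS formula $\Sigma^*_{S+1}$ (or $\Sigma^*_{S+1} \cup \{1\}$) from that theorem gives exactly $Q$, and so that Proposition \ref{prop-classifySgaps} delivers soficity, with finite type whenever $Q$ is almost sum-closed.

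For $Q = \{d\}$, take $S = \{d-1\}$; then $X(S)$ is an SFT and $\operatorname{LPS}(X(S)) = \Sigma^*_{\{d\}} = \{d\}$ by Proposition \ref{prop-almostsumprops}. For $Q = dR \cup \{1\}$ (the case in which $Q$ is generally not almost sum-closed, so only the sofic conclusion is needed) with $dR$ almost sum-closed and $R$ cofinite, take $S = dR - 1$; then $S+1 = dR$ is almost sum-closed, so $\Sigma^*_{S+1} = dR$, and since $S$ is infinite Theorem \ref{thm-LPSofgap} yields $\operatorname{LPS}(X(S)) = dR \cup \{1\} = Q$. Cofiniteness of $R$ forces $S$ to contain a cofinite arithmetic progression of common difference $d$, so $S$ is eventually periodic and $X(S)$ is sofic by Proposition \ref{prop-classifySgaps}.

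The substantive case is $Q = dR$ with $R$ cofinite, almost sum-closed, and $|R| \geq 2$, where we must produce a \emph{finite} gap set to obtain an SFT. Fix $L$ with $R \supseteq [L,\infty)\cap\N$ and set $S + 1 := Q \cap [1, dM]$ for $M$ to be chosen large. Then $X(S)$ is an SFT, and the containment $\Sigma^*_{S+1} \subseteq Q$ follows from the almost sum-closure of $Q$ by induction on the number of summands, while any $q \in Q$ with $q \leq dM$ automatically lies in $S+1$. So the task reduces to writing every $q = dr \in Q$ with $r > M$ as a sum of at least two distinct elements of $R \cap [L, M]$.

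The main obstacle is this ``two distinct summands'' constraint. Sylvester--Frobenius represents any large enough $r$ as $r = aL + b(L+1)$ with $a, b \geq 0$, which already supplies two distinct summands when $a, b > 0$; the remaining obstructions are the pure-multiple cases $r = aL$ ($a \geq 2$) and $r = b(L+1)$ ($b \geq 2$). For $M \geq 2L+2$ both $2L$ and $2(L+1)$ lie in $R \cap [L, M]$, and any $r > M$ of one of these pure forms can be rewritten, e.g., $aL = 2L + (a-2)L$ or $b(L+1) = 2(L+1) + (b-2)(L+1)$, each now using two distinct elements of $R \cap [L, M]$ (with a further substitution via $L+2 \in R$ handling the boundary value $r = L(L+1)$). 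Choosing $M$ past the Frobenius number of $\{L, L+1\}$ and at least $2L+2$ therefore yields $\Sigma^*_{S+1} = Q$, completing the SFT construction.
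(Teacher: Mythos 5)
Your proposal is correct and takes essentially the same route as the paper: the singleton case and the case $Q=dR\cup\{1\}$ (via $S=dR-1$ and Proposition \ref{prop-classifySgaps}) are handled identically, and for almost sum-closed $Q=dR$ you, like the paper, realize $Q$ by a finite truncation of $Q$ as the gap set and verify $\Sigma^*_{S+1}=Q$. The only difference is bookkeeping in that verification -- the paper takes the sporadic part together with the block $\{dN,\dots,2dN\}$, whose almost sum-closure is the whole tail, whereas you take all of $Q$ up to $dM$ and use a Frobenius-type argument on $\{L,L+1\}$ with the pure-multiple cases patched by $2L$ and $2(L+1)$ -- a cosmetic variation, and your handling of the ``two distinct summands'' constraint is sound.
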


\begin{proof}
    Let $Q$ be a set as indicated and for $S\subset\N_0$, let $X(S)$ be the $S$-gap shift. We will first prove the second half of the theorem. If $Q$ is a singleton $\{ n \}$, we take $S=\{ n-1 \}$. Then $X(S)$ is a finite orbit of length $n$. Singletons are trivially almost sum-closed, and $X(S)$ is an SFT, so we are done.

    Next, suppose $Q$ is an almost sum-closed set $dR$ with $R$ cofinite. We will find a finite set of elements whose almost sum-closure is $Q$. Write first $$Q=F\cup \{dN,d(N+1),d(N+2),...\},$$
    where every element of $F$ is divisible by $d$ and strictly smaller than $dN$. Now by assumption $Q$ is almost sum-closed, so for some $F'\subset \N$, $$\Sigma_F^*=F\cup F'\subset Q.$$
    Moreover, if we set $$B=\{dN,d(N+1),d(N+2),...,d\cdot2N\},$$
    then $$\Sigma^*_B=\{dN,d(N+1),d(N+2),...\}.$$
    Thus, in this particular case, $$\Sigma^*_{F\cup B}=F\cup \{dN,d(N+1),d(N+2),...\}=Q.$$
    We noted in the proof of Theorem \ref{thm-LPSofgap} that if $S$ is finite, 
    $$\operatorname{LPS}(X(S))=\Sigma^*_{S+1}.$$
    Thus, if we set $S=(F\cup B) - 1$, then $X(S)$ is an SFT by Proposition \ref{prop-classifySgaps}, and $$\operatorname{LPS}(X(S))=Q.$$
    
    Finally, suppose $$Q= {dR} \cup {\{ 1 \}}$$ with $R$ cofinite and $dR$ almost sum-closed. Then $dR$ is eventually periodic, so $dR-1$ is also eventually periodic. Thus, setting $S=dR-1$, the $S$-gap shift $X(S)$ is sofic by Proposition \ref{prop-classifySgaps}. 
    Now, we recall from the proof of Theorem \ref{thm-LPSofgap} that if $S$ is infinite, 
    $$\operatorname{LPS}(X(S))=\Sigma_{S+1}^*\cup\{1\}.$$
    But by Proposition \ref{prop-almostsumprops}, we have $$\Sigma_{(dR-1)+1}^*=\Sigma_{dR}^*=dR,$$
    and hence for $S=dR-1$,
    $$\operatorname{LPS}(X(S))=dR\cup\{1\},$$
    completing the proof.
\end{proof}

\subsection{Arbitrary LPS}

We conclude our collection of realizations by showing that \textit{any} subset of the natural numbers is the LPS of some subshift of the full 2-shift. 

\begin{theorem}\label{thm-realizeLPSarbitrary}
    Let $S\subset \mathbb{N}$ be arbitrary. Then there exists a shift space $X \subset \{ 0,1 \}^\mathbb{Z}$  whose LPS is $S$.
\end{theorem}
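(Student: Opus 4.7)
The plan is to split into cases based on the size of $S$. If $S=\emptyset$, then $X:=\emptyset$ (or equivalently any nonempty aperiodic subshift such as a Sturmian subshift) works. If $S$ is a nonempty finite set, I would set
$$X := \bigcup_{n\in S} \operatorname{orb}(p_n),$$
where for $n\geq 2$ one takes $p_n := (10^{n-1})^\infty$ (which is non-purely-periodic of least period $n$) and $p_1 := 0^\infty$ if $1\in S$. This is a finite union of finite orbits, so it is automatically closed and shift-invariant with $\operatorname{LPS}(X)=S$.

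The interesting case is when $S$ is infinite, where the naïve union above is not closed and its closure typically picks up unwanted periodic limits (for instance, $(10^{n-1})^\infty \to 0^\infty$ introduces period $1$). To control this, I would fix an aperiodic minimal bi-infinite sequence $y\in\{0,1\}^\Z$, that is, one whose orbit closure $Y:=\overline{\operatorname{orb}(y)}$ is a minimal subshift with no periodic points (for example, the Thue-Morse or a Sturmian subshift). For each $n\in S$, choose $w_n\in\{0,1\}^n$ to be a non-purely-periodic factor of $y$ (available for all sufficiently large $n$ by aperiodicity, with a handful of small cases handled directly), set $p_n := w_n^\infty$, and define
$$X := Y \,\cup\, \bigcup_{n\in S} \operatorname{orb}(p_n).$$
The design is that the $p_n$'s approximate $y$ (and its orbit) on longer and longer windows as $n\to\infty$, so the only accumulation points they contribute lie in $Y$. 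Since $Y$ has no periodic points and each $p_n$ has least period exactly $n$, this would give $\operatorname{LPS}(X)=S$.

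The main obstacle is showing that $X$ above is genuinely closed, equivalently that every subsequential limit of $\sigma^{i_k}(p_{n_k})$ with $n_k\to\infty$ lies in $Y$. The issue is the wrap-around: when one period $w_n$ abuts the next, new two-sided factors can appear that are not factors of $y$, and a priori these could assemble into a periodic limit. I would address this by choosing the $w_n$'s using the recurrence structure of $y$, for instance taking $w_n$ to be a factor whose left and right extensions inside $y$ agree on a buffer of length tending to infinity with $n$, so that on any fixed window $[-N,N]$ the periodic point $\sigma^{i}(p_n)$ agrees with some shift of $y$ once $n$ is large enough. A convenient setting is Sturmian $y$, where linear recurrence of factors makes this buffer construction explicit; alternatively one can use the morphic (substitutive) structure of Thue-Morse to do the same. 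Once this compatibility is verified, a routine case analysis on how $i_k$ sits relative to $n_k$ (bounded shift, shift of order $cn_k$ for $c\in(0,1)$, or shift close to $0$ or $n_k$) shows that every subsequential limit coincides on every finite window with a shift of $y$, hence lies in $Y$ and is non-periodic, completing the argument.
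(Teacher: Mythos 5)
Your overall architecture is sound and genuinely different from the paper's: the paper never introduces an auxiliary subshift, but instead chooses the periodic words explicitly (powers of $10^{k-1}$, and when $1\notin S$ words of the form $1u_1^{g_i}u_i'$) so that the orbits $\sigma^j(u_i^\infty)$ accumulate only on $0^\infty$ or on the orbit of $u_1^\infty$, whose least period is already in $S$. However, the step you yourself flag as the main obstacle is resolved in your plan by a compatibility condition that cannot be satisfied. You need every shift of $p_n=w_n^\infty$ to agree with some shift of $y$ on the window $[-N,N]$ once $n$ is large; equivalently, every factor of $w_n^\infty$ of length $2N+1$, in particular the ``seam'' word consisting of the length-$N$ suffix of $w_n$ followed by its length-$N$ prefix, must be a factor of $y$. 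For a Sturmian $y$ of slope $\alpha$ this fails for most $n$ no matter which factor $w_n$ of $y$ you choose: writing $w_n$ and the seam word as codings of rotation-orbit segments, membership of both in the language forces $\mathrm{dist}(n\alpha,\Z)\le 2\,g(N)$, where $g(N)$ is the maximal length of a cylinder arc for words of length $N$ and $g(N)\to0$; so for the positive-density set of $n$ with $\mathrm{dist}(n\alpha,\Z)>1/10$ the achievable window length is bounded, and the condition fails even when $S=\N$. Your buffer formulation runs into the same wall: an occurrence of $w_n$ in $y$ preceded by its own length-$B$ suffix is a factor of $y$ of length $n+B$ with period $n$, and Sturmian words admit such long repetitions only when $\mathrm{dist}(n\alpha,\Z)\lesssim 1/B$, i.e.\ essentially only along continued-fraction denominators; linear recurrence does not help. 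Thue--Morse is no better: by recognizability of the substitution, a periodic point all of whose factors up to length $m$ lie in the Thue--Morse language has period divisible by a power of $2$ that grows with $m$, so (for instance) odd periods can never be realized close to that subshift.

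The good news is that the gap is repairable with the same objects but a different closing argument: take $w_n$ to be any primitive factor of $y$ of length $n$, let $X$ be the \emph{closure} of $Y\cup\bigcup_{n\in S}\operatorname{orb}(w_n^\infty)$ (abandoning the claim that all limit points lie in $Y$), and rule out unwanted periods directly. If $x=\lim_k\sigma^{i_k}(w_{n_k}^\infty)$ with $n_k\to\infty$ were periodic of least period $p$, then each window $x_{[-N,N]}$ is a factor of $w_{n_k}^\infty$ meeting at most one seam, hence contains a seam-free subword of length at least $N$ that is both $p$-periodic and a factor of $w_{n_k}$, hence of $y$; thus $y$ would contain arbitrarily long $p$-periodic factors, forcing a $p$-periodic point in $Y$ and contradicting aperiodicity. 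As submitted, though, the crucial compatibility claim on which your proof rests is false for the suggested choices of $y$, so the argument is incomplete; the paper's elementary construction sidesteps the issue entirely by making the periodic orbits accumulate on a periodic orbit whose period is already in $S$.
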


\begin{proof}
    If $S$ is finite, take  $X=\{\sigma^n((10^{k-1})^\infty):k\in S,n\in \mathbb{Z}\}$. Then $X$ is a finite shift space.

    Suppose $S=\{k_{1},k_{2},\dots\}$ is infinite and written in strictly increasing order. We consider two cases.

    If $1\in S$, for all $i\in \mathbb{N}$, take $u_{i}=10^{k_{i}-1}$, which is not purely periodic. Then the LPS of
    $$
    X=\overline{\{ \sigma^n(u_{i}^\infty):i \in \mathbb{N}, n\in \mathbb{Z} \}}
    $$
    certainly contains $S$. We will show that it contains no other periods. 

    Let $X$ be as above, and take $x\in X$ periodic with least period $k$, where $x=w^\infty$ for $w$ not purely periodic. If there are no 1s in $w$, then $x=0^\infty$, which has least period 1. This period is in $S$ by assumption, and we are safe.

    Otherwise, take $x^{(n)}$ to be a sequence of points such that for all $n\in\N$, $$x^{(n)}=\sigma^j(u_i^\infty)$$
    for some $i\in\N,j\in\Z$ and
    $$\lim_{n\to\infty} x^{(n)} = x.$$
    Now there exists an $N$ such that for all $n\geq N  $, we have that $$x^{(n)}_{[-k,k-1]} = x_{[-k,k-1]} =w^2.$$ 
    Since there is a 1 somewhere in $w$, we know that the word $10^{l}1$ is in $w^2$ for some $l$. But if this word is contained in $x^{(n)}$, then $x^{(n)}$ must be some shift of $(10^l)^\infty$, and therefore $l=k_i-1$ for some $k_i\in S$. We conclude that for some $i\in \N,j\in\Z$. 
    $$x^{(n)}=\sigma^j(u_i^\infty)$$
    for all $n\geq N$. Hence, $$x=\sigma^j(u_i^\infty),$$
    and the least period of $x$ is in $S$ by construction.

    Suppose now that $1\notin S$. Set $u_1=10^{k_1-1}$. For $i>1$, take $$u_{i}=1u_{1}^{g_i}u_{i}',$$ where $g_i\in\N_0$ and $u_{i}'$ is a strict prefix of $u_1$ such that $|u_1^{g_i}u_{i}'|=k_i-1$. This way $u_i$ is not purely periodic, since the subword 110 only occurs at the very start. At the same time, $u_i$ contains the subword 110 for any $i>1$, since $k_1\neq1.$
    
    Now the LPS of the shift space
    $$
    X=\overline{\{ \sigma^n(u_{i}^\infty):i \in \mathbb{N}, n\in \mathbb{Z} \}}
    $$
    again contains $S$. We show that no other least periods are created, using a similar, but slightly more subtle, argument as in the previous case.

    Let again $X$ be as above, and take $x\in X$ periodic with least period $k$, where $x=w^\infty$ for $w$ not purely periodic. 
    
    Moreover, take once again $x^{(n)}$ to be a sequence of points such that for all $n\in\N$, $$x^{(n)}=\sigma^j(u_i^\infty)$$
    for some $i\in\N,j\in\Z$ and
    $$\lim_{n\to\infty} x^{(n)} = x.$$

    Slightly altering our previous argument, there exists an $N$ such that for all $n\geq N  $, we have that $$x^{(n)}_{[-k,2k-1]} = x_{[-k,2k-1]} =w^3.$$ 

    Suppose that the word 110 appears in $x$. Then $w^3$ contains a word of the form $110\tilde{u}110$ for some subword $\tilde{u}$. For all $n$ large enough, $x^{(n)}$ then also contains $110\tilde{u}110$. This is only possible if $$x^{(n)}=\sigma^j(u_i^\infty)$$ for some $i\in\N,j\in\Z$ for all large $n$, and hence $x=\sigma^j(u_i^\infty)$ with least period $k_i\in S$.

    If the word 110 does \textit{not} appear in $x$, take a subsequence $n_m$ such that
    $$x^{(n_m)}_{[-mk,mk-1]} = x_{[-mk,mk-1]}=w^{2m}$$
    for all $m\in\N$. Since the word 110 does not appear in $w^{2m}$ for any $m$, we know that if 110 appears in $x^{(n_m)},$ it must be outside of the window $[-mk,mk-1]$. In other words, for all $m$,
    $$x^{(n_m)}_{[-mk,mk-1]}=\sigma^j(u_1^\infty)_{[-mk,mk-1]}$$ for some $j\in\Z$.
    We then conclude that $x=\sigma^j(u_1^\infty)$, which has least period $k_1\in S$.

\end{proof}

\bibliographystyle{alpha}
\bibliography{refs}

\end{document}